\documentclass[a4paper]{amsart}
\usepackage{geometry}

\usepackage{mathtext}
\usepackage[T1]{fontenc}
\usepackage{color}
\usepackage{amsmath}
\usepackage{amsfonts,amssymb,amsthm}
\usepackage[pdftex]{graphicx}
\usepackage{comment}
\usepackage{float}
\usepackage{todonotes}
\usepackage{microtype}

\usepackage{caption}
\captionsetup[table]{justification=centering}

\usepackage[colorlinks=false, linktocpage=true]{hyperref}

\DeclareRobustCommand{\SkipTocEntry}[5]{}

\usepackage{tikz}
\usetikzlibrary{positioning, arrows.meta}
\usetikzlibrary{decorations.markings}
\tikzset{middlearrow/.style={
        decoration={markings,
            mark= at position 0.5 with {\arrow{#1}} ,
        },
        postaction={decorate}
    }
}
\tikzset{firstthirdarrow/.style={
        decoration={markings,
            mark= at position 0.33 with {\arrow{#1}} ,
        },
        postaction={decorate}
    }
}
\tikzset{secondthirdarrow/.style={
        decoration={markings,
            mark= at position 0.66 with {\arrow{#1}} ,
        },
        postaction={decorate}
    }
}
\usetikzlibrary{cd}

\usepackage{enumerate}

\usepackage{cite}

\usepackage{color}
\definecolor{NoteColor}{rgb}{1,0,0}

\DeclareMathOperator{\sgn}{sgn}

\DeclareMathOperator{\Aut}{Aut}

\DeclareMathOperator{\Hom}{Hom}
\DeclareMathOperator{\diag}{diag}
\DeclareMathOperator{\Id}{Id}
\DeclareMathOperator{\Sym}{Sym}
\DeclareMathOperator{\Span}{Span}

\DeclareMathOperator{\Mat}{Mat}
\DeclareMathOperator{\PSp}{PSp}
\DeclareMathOperator{\Sp}{Sp}
\DeclareMathOperator{\GL}{GL}
\DeclareMathOperator{\SL}{SL}
\DeclareMathOperator{\PSL}{PSL}
\DeclareMathOperator{\PO}{PO}
\DeclareMathOperator{\OO}{O}
\DeclareMathOperator{\UU}{U}

\DeclareMathOperator{\SO}{SO}

\DeclareMathOperator{\Rep}{Rep}
\DeclareMathOperator{\Stab}{Stab}

\DeclareMathOperator{\Lie}{Lie}

\DeclareMathOperator{\Fix}{Fix}

\DeclareMathOperator{\Herm}{Herm}

\DeclareMathOperator{\Flag}{Flag}

\DeclareMathOperator{\Conf}{Conf}
\DeclareMathOperator{\fr}{fr}

\DeclareMathOperator{\Norm}{Norm}
\DeclareMathOperator{\Ad}{Ad}
\DeclareMathOperator{\SU}{SU}
\DeclareMathOperator{\PU}{PU}
\DeclareMathOperator{\PGL}{PGL}

\newcommand{\R}{\mathbb R}

\newcommand{\CC}{\mathbb C}
\newcommand{\HH}{\mathbb H}

\newcommand{\N}{\mathbb N}
\newcommand{\Z}{\mathbb Z}

\newcommand{\Oc}{\mathbb O}
\newcommand{\T}{\mathcal T}
\newcommand{\Pp}{\mathcal P^+}
\newcommand{\Pm}{\mathcal P^-}
\newcommand{\Up}{\mathcal U^+}
\newcommand{\Um}{\mathcal U^-}
\newcommand{\Upp}{\mathcal U^+_{>0}}

\newcommand{\up}{\mathfrak u^+}
\newcommand{\um}{\mathfrak u^-}
\newcommand{\F}{\mathcal F}
\renewcommand{\L}{\mathcal L}
\renewcommand{\T}{\mathcal T}

\newcommand{\g}{\mathfrak g}
\newcommand{\s}{\mathfrak s}
\renewcommand{\sl}{\mathfrak{sl}}
\newcommand{\coloneqq}{\mathrel{\mathop:}=}

\theoremstyle{plain}
\newtheorem{teo}{Theorem}[section]

\newtheorem{cor}[teo]{Corollary}

\newtheorem{lem}[teo]{Lemma}

\newtheorem{prop}[teo]{Proposition}

\newtheorem{fact}[teo]{Fact}

\theoremstyle{definition}
\newtheorem{df}[teo]{Definition}

\theoremstyle{remark}
\newtheorem{rem}[teo]{Remark}

\newcommand{\bs}{\smallsetminus}
\newcommand{\defin}{\emph}

\renewcommand{\emptyset}{\varnothing}


\relpenalty=10000 

\begin{document}

\sloppy

\title{Parametrizing spaces of positive representations}

\author[O. Guichard]{Olivier Guichard}
\address{IRMA UMR 7501, Université de
  Strasbourg et CNRS,
  Strasbourg, France}
\email{olivier.guichard@math.unistra.fr}

\author[E. Rogozinnikov]{Eugen Rogozinnikov}
\address{IRMA UMR 7501, Université de
  Strasbourg et CNRS,
  Strasbourg, France}
\email{erogozinnikov@gmail.com}

\author[A. Wienhard]{Anna Wienhard}
\address{Mathematisches Institut, Ruprecht-Karls-Universität Heidelberg,
  Germany\hfill{}\ \linebreak
  HITS gGmbH, Heidelberg Institute for Theoretical Studies, Heidelberg,
  Germany}
\email{wienhard@uni-heidelberg.de}

\thanks{E.R.\;thanks the Labex IRMIA of the Universit\'e de Strasbourg for support during the preparation of this article. O.G.\;thanks the Institut univesitaire de France and acknowledges support of the Agence Nationale de la Recherche under the grant DynGeo (ANR-16-CE40-0025).
A.W.\;is partially funded by the European Research Council under  ERC-Advanced Grant 101018839, by the Klaus Tschira Foundation, and by the Deutsche Forschungsgemeinschaft under Germany's Excellence Strategy EXC-2181/1 - 390900948 (the Heidelberg STRUCTURES Cluster of Excellence)}

\begin{abstract}
 Using Lusztig's total positivity in split real Lie groups V.\;Fock and A.\;Goncharov have introduced spaces of positive (framed) representations. 
For general semisimple Lie groups a generalization of Lusztig's total positivity was recently introduced by O.\;Guichard and A.\;Wienhard. They also introduced the associated space of positive representations. Here we consider the corresponding spaces of positive framed representations of the fundamental group of a punctured surface. 
We give several parametrizations of the spaces of framed positive representations. Using these parametrizations, we describe their topology and their homotopy type. We show that the number of connected components of the space of framed positive representations agrees with the number of connected components of the space of positive representations, and determine this number for simple Lie groups. Along the way, we also parametrize, for  an arbitrary semisimple Lie group, the space of representations of the fundamental group of a punctured surface which are transverse with respect to a fixed ideal triangulation of the surface.
\end{abstract}

\maketitle

\tableofcontents

\section{Introduction}

In the papers~\cite{GW1,GW2}, O.\;Guichard and A.\;Wienhard introduced a notion of $\Theta$-positivity for Lie groups that generalizes the notion of the Lusztig's total positivity for split real Lie groups~\cite{Lus94} to a larger class of semisimple Lie groups. The groups admitting such positive structures are of particular interest for higher rank Teichm\"uller theory because spaces of positive representations of the fundamental group of a closed surface $S$ into a semisimple Lie group $G$ with a positive structure provide examples of higher rank Teichm\"uller spaces: they provide subspaces of the character variety $\Rep(\pi_1(S),G)\coloneqq\Hom(\pi_1(S),G)/G$ that consist entirely of discrete and faithful representations.

Spaces of positive representations are not only interesting for closed surfaces but also for surfaces with punctures or boundary components. In fact, in these cases, the space of framed (and of decorated) representations admits particularly nice cluster structures. The space of framed representations of the fundamental group of a punctured surface of negative Euler characteristic into complex Lie groups and split real Lie groups was introduced by  V.~Fock and A.~Goncharov in~\cite{FG}. They study these spaces using ideal triangulations of surfaces, i.e.\ triangulations whose set of vertices agrees with the set of punctures of the surface. They introduce so-called $\mathcal X$-type and $\mathcal A$-type cluster coordinates associated with ideal triangulations and parametrize spaces of framed representations that are transverse to a given ideal triangulation. Moreover, they show that the positive locus of these coordinates is independent on the choice of triangulation and parametrizes higher rank Teichm\"uller spaces. In~\cite{BD14,BD17} F.~Bonahon and G.~Dreyer generalize this approach and give parametrizations of higher rank Teich\"uller spaces if $G=\PSL_n(\R)$ for closed surfaces using coordinates associated to geodesic laminations.

In the case of a split real Lie group $G$, the positive structure introduced by O.~Guichard and A.~Wienhard, agrees with Lusztig's total positivity if $\Theta$ is the set of all simple roots of $G$. In this case, when the surface is closed the space of positive representations agrees with the Hitchin components. The structure of Hitchin components is well-studied for closed surfaces~\cite{H92} as well as for surfaces with punctures and boundary components~\cite{FG}. In particular, their topology is well understood. More precisely, every Hitchin component is homeomorphic to an open ball in a Euclidean vector space. In particular, they are contractible manifolds.

Another example of Lie groups with a positive structure are Hermitian Lie groups of tube type. In this case positive representations are maximal representations which were introduced and studied in~\cite{BIW,BILW,Strubel}. The topology of spaces of maximal representation for closed surfaces was studied in~\cite{Gothen, Guichard_Wienhard_InvaMaxi, BGPG, AC}, partly using the theory of Higgs bundles. In~\cite{AGRW}, the spaces of framed and decorated maximal representations into the real symplectic group $\Sp(2n,\R)$ are parametrized using a noncommutative analog of the Fock--Goncharov parametrization~\cite{FG} and the topology of them is studied. In contrast to the Hitchin components, these spaces are not contractible, and their topology is more complicated.


In this article, we generalize the approach of~\cite{FG,AGRW} to a larger class of Lie groups. For a semisimple Lie group $G$ and a self-opposite parabolic subgroup $\Pp$ of $G$, we introduce the space of representations of the fundamental group of a punctured surface into $G$ framed by elements of the flag variety $G/\Pp$. We parametrize these spaces using parameters associated to ideal triangulations of the surface. If the group $G$ carries a positive structure in the sense of~\cite{GW2}, we parametrize spaces of positive representations as well. Using this parametrization, we study the topology of spaces of positive representations, its homotopy type and count the number of connected components.

We now describe our results in more detail.

Let~$S$ be a surface without boundary of negative Euler characteristic $\chi(S)$ and with $k>0$ punctures (we refer to Section~\ref{sec:top_data} for the wider generality that can be allowed for~$S$, for example disks with marked points on the boundary). Let $G$ be a semisimple Lie group and $\Pp$ be a self-opposite parabolic subgroup of $G$.

A \defin{framed representation} is a representation $\pi_1(S) \rightarrow G$ together with a $k$-tuples of flags $(F_1, \dots, F_k)$ of $\F\coloneqq G/\Pp$ that are fixed by the images of the peripheral elements $c_1, \dots, c_k$ in $\pi_1(S)$.

Fixing an ideal triangulation~$\mathcal{T}$ of $S$, we consider a subspace of the space of framed representations, that are transverse with respect to $\mathcal T$ (we refer to Definition~\ref{df:transverse_rep} for more details). Further, we introduce two collections of parameters: The first collection is associated to the triangles of $\mathcal T$, and is given by the elements of the unipotent subgroup $\Up$ of $\Pp$ that parametrize triples of flags  associated to the vertices of the triangles. There are $-2\chi(S)$ parameters in this first collection. To describe the second collection of parameters, we need to lift the ideal triangulation~$\mathcal{T}$ to the universal covering $\tilde S$ of $S$ and to fix a connected fundamental domain $D$ of $S$ that consists of triangles of the lifted triangulation. We associate to every pair of edges in the boundary of $D$ that are identified by an element of the fundamental group of $S$ an element of the Levi subgroup of $\L$ of $\Pp$. There are $1-\chi(S)$ parameters in this second collection. The following theorem shows that given these two collections of parameters up to a common conjugation by an element of $\L$, a framed representation can be uniquely reconstructed:

\begin{teo}\label{intro:main_thm}
The space of framed representations that are transverse with respect to $\mathcal T$ is homeomorphic to:
$$\bigl((\Up_*)^{-2\chi(S)}\times\L^{1-\chi(S)}\bigr)/\L$$
where $\Up_*$ is the open and dense subspace of $\Up$ that parametrizes pairwise transverse triples of flags in $\F$ and $\L$ acts by conjugation in every factor.
\end{teo}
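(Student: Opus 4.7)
The plan is to exhibit explicit mutually inverse continuous maps between the space of transverse framed representations (modulo $G$-conjugation) and the parameter space $((\Up_*)^T \times \L^E)/\L$, where $T=-2\chi(S)$ and $E=1-\chi(S)$. The key combinatorial input is that $D$ is simply connected, so its dual graph (triangles as vertices, interior edges as edges) is a tree, which makes propagation of flags through $D$ unambiguous. I label the triangles of $D$ as $t_1,\dots,t_T$ with a root $t_1$ and an ordering $v_0,v_1,v_2$ of its vertices, label the identified boundary edge pairs as $(e_j,e_j')$ with $\gamma_j\in\pi_1(S)$ realizing the identification, and fix a standard transverse pair $(F^+,F^-)\in\F\times\F$ whose stabilizer is $\L$. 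Since $S$ has at least one puncture, $\pi_1(S)$ is free on the $E$ generators $\gamma_j$.

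For the forward direction, a transverse framed representation $(\rho,(F_1,\dots,F_k))$ extends equivariantly to a flag $F_v$ at every vertex $v$ of the lifted triangulation $\tilde{\mathcal T}$. By transversality I can normalize by $G$ so that $F_{v_0}=F^+$ and $F_{v_1}=F^-$; the residual ambiguity is the diagonal $\L$-action. Each triangle $t_i$ then carries a pairwise transverse triple of flags which produces a well-defined element $u_i\in\Up_*$ via the parametrization of ordered transverse triples. For each boundary pair $(e_j,e_j')$, the flags at the endpoints of both edges are already assigned, $\rho(\gamma_j)$ maps the transverse pair on $e_j$ to that on $e_j'$, and since $G$ acts transitively on transverse pairs with stabilizer $\L$, recording the Levi discrepancy relative to a canonical choice gives $\ell_j\in\L$.

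Conversely, given parameters $(u_1,\dots,u_T,\ell_1,\dots,\ell_E)$, I place $(F^+,F^-)$ at $(v_0,v_1)$ of $t_1$ and propagate flags through the dual tree of $D$: along each interior edge, the $\Up_*$-parameter of the next triangle, together with the two already-placed flags, uniquely determines the third flag. After this, every vertex of every triangle in $D$ carries a flag. For each boundary pair $(e_j,e_j')$, the $\L$-parameter $\ell_j$ together with the two transverse pairs of flags at the endpoints of $e_j$ and $e_j'$ determines $\rho(\gamma_j)\in G$ uniquely. Since $\pi_1(S)$ is free on the $\gamma_j$, this defines a representation, and the flags at the ideal vertices provide the framing.

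The main obstacle is verifying that the representation so produced is genuinely a framed representation: each peripheral element $c_i\in\pi_1(S)$ must fix the flag $F_i$ assigned to the puncture $p_i$. This reduces to a cyclic consistency condition around each ideal vertex of $D$ lying on $\partial D$: following the cycle of triangles and boundary identifications meeting at that vertex, the composition of flag propagations and Levi gluings must stabilize the cusp flag. This should follow from the intrinsic nature of the $\Up_*$-parameters together with the bookkeeping of the $\L$-parameters, the crucial point being that the cusp flag is itself stabilized by the whole of $\L$ acting at the cusp, so the rotation around the cusp lands automatically in the stabilizer. The remaining technical verifications — mutual inverseness, that the diagonal $\L$-action on the parameter side matches the residual normalization ambiguity, that $\mathcal T$-transversality corresponds precisely to each $u_i$ lying in the open dense locus $\Up_*$ rather than on its boundary, and continuity in both directions — are algebraic and follow from the structure of the flag variety $\F$ and its open strata.
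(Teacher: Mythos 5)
Your overall route is the same as the paper's: normalize a base transverse pair to kill all of $G$ except a residual $\L$, read off one element of $\Up_*$ per triangle of the fundamental domain and one element of $\L$ per identified pair of boundary edges, and invert by propagating flags through the dual tree of $D$ and gluing along the free generators $\gamma_j$. The counts ($-2\chi(S)$ triangles, $1-\chi(S)$ edge pairs, $\pi_1(S)$ free of rank $1-\chi(S)$) are correct, and the forward direction and the matching of the residual $\L$-action are fine. The paper packages the same propagation in the language of $\pi_1(S)$-invariant framed $G$-local systems on the tree $\tilde\Gamma$, with the ``turn'' and ``edge-crossing'' moves of Section~\ref{Moves} playing the role of your dual-tree propagation.

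The genuine gap is exactly at the step you flag as the main obstacle, and the justification you offer for it does not work as stated. What must be shown is that the flag assignment built inside $D$ extends to a well-defined $\rho$-equivariant map $\tilde P\to\F$; this includes the case of a deck transformation identifying an ideal vertex of $D$ with itself or with another ideal vertex of $D$, and in particular that each peripheral holonomy fixes the corresponding cusp flag. Your argument that ``the cusp flag is stabilized by the whole of $\L$, so the rotation around the cusp lands automatically in the stabilizer'' assumes the conclusion: $\L$ is the stabilizer of the fixed standard pair $(F^+,F^-)$, not of the cusp flag at an arbitrary lifted puncture, the gluing elements $\rho(\gamma_j)$ are not in $\L$, and there is no a priori reason the product of gluings and propagations around a cusp lies in (a conjugate of) $\L$ or even of $\Pp$ --- that is precisely what has to be proved. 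The correct verification is a consistency argument along the infinite fan of triangles of $\tilde{\mathcal T}$ at a lifted puncture $\tilde p$: any two consecutive triangles of the fan share an edge having $\tilde p$ as an endpoint, and every elementary step of the reconstruction --- internal consistency within a translate of $D$, and the definition of $\rho(\gamma_j)$ as the unique element matching the flag pairs at the endpoints of the glued edges up to the $\L$-twist $\ell_j$ (which stabilizes that pair) --- identifies the flags at the endpoints of the shared edge on both sides. Hence the flag at $\tilde p$ is constant along the whole fan, the equivariant extension to $\tilde P$ is well defined, and $\rho(c)F(\tilde p)=F(c\tilde p)=F(\tilde p)$ for the peripheral $c$. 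This is exactly what the paper's formalism supplies: the reconstructed local system on $\tilde\Gamma$ is $\pi_1(S)$-invariant by construction, its framing satisfies the local compatibility conditions of the Proposition in Section~\ref{Loc_to_Rep}, and that Proposition together with the equivariance remark yields the framing on $\tilde P$ and the peripheral fixing. Without this (or an equivalent) argument your converse construction does not yet produce a framed representation, so the proof is incomplete at its crucial point.
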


If the group $G$ admits a positive structure, then we can similarly parameterize spaces of framed positive representations in the sense of~\cite{GW1,GLW}:
\begin{teo}\label{intro:main_thm_positive}
The space of positive framed representations is homeomorphic to:
$$\bigl((\Upp)^{-2\chi(S)}\times\L_0^{1-\chi(S)}\bigr)/\L_0,$$
where $\Upp$ is the positive semigroup of $\Up$ parametrizing positive triples of flags in $\F$ and $\L_0$ is the subgroup of the Levi subgroup $\L$ stabilizing $\Upp$ under the action by conjugation that acts by conjugation in every factor.
\end{teo}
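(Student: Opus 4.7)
The plan is to deduce Theorem~\ref{intro:main_thm_positive} from Theorem~\ref{intro:main_thm} by restricting the parametrization to the positive locus. The first step is to observe that every positive framed representation is automatically transverse with respect to the ideal triangulation~$\mathcal T$: this is a direct consequence of the fact, established in~\cite{GW1,GLW}, that any positive tuple of flags is pairwise transverse. Consequently, Theorem~\ref{intro:main_thm} assigns to every positive framed representation a well-defined class in $\bigl((\Up_*)^{-2\chi(S)}\times\L^{1-\chi(S)}\bigr)/\L$, and it remains to identify the image of the positive locus.

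Next, I would characterize that image parameter-by-parameter. The element of $\Up_*$ attached to a triangle of $\mathcal T$ is the canonical unipotent invariant of the triple of flags at its vertices; positivity of that triple is, by the very definition of $\Theta$-positivity, equivalent to the parameter lying in the positive semigroup $\Upp$. The element of $\L$ attached to a pair of edges identified by a deck transformation plays the role of a Levi-valued cross-ratio between the two triangles sharing that edge: by the inductive description of positive configurations of flags in~\cite{GW1,GLW} (positive quadruples are obtained from two positive triples glued along a Levi element that preserves the positive semigroup under conjugation), this parameter lies in $\L_0$ precisely when the resulting quadruple is positive. Conversely, any choice of parameters in $\Upp^{-2\chi(S)}\times\L_0^{1-\chi(S)}$ produces, via the reconstruction of Theorem~\ref{intro:main_thm}, a framed representation whose framing along every combinatorial path in $\tilde S$ extends a positive configuration of flags: one propagates positivity by a finite induction on triangles crossed, the inductive step being valid exactly because the gluing element lies in $\L_0$. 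Well-definedness of the quotient is immediate since, by definition, $\L_0$ acts on $\Upp$ by conjugation and on itself by inner automorphisms.

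The main obstacle is this gluing step: one must show that positivity of the configuration of \emph{all} flags on the ideal boundary coming from the universal cover is captured by the \emph{local} conditions that each triangle parameter belongs to $\Upp$ and each edge parameter belongs to $\L_0$. Concretely, the delicate point is to verify that gluing two positive triangles across an edge via an element of $\L_0$ yields a positive quadruple, and that this local-to-global property propagates consistently around every puncture so that the peripheral framings fit into a globally positive cyclic configuration. Once this inductive gluing lemma is in place, Theorem~\ref{intro:main_thm_positive} follows by specializing Theorem~\ref{intro:main_thm} to the identified subsets and observing that the induced quotient topology coincides with the subspace topology inherited from the ambient moduli space of framed representations.
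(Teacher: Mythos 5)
Your proposal defers exactly the step that carries the mathematical content. The ``inductive gluing lemma'' you flag as the main obstacle --- that positivity of all quadrilaterals of the one fixed triangulation $\tilde{\mathcal T}$ with respect to $F$ forces positivity of \emph{every} quadruple of boundary flags, hence of the framing --- is precisely the unlabeled proposition the paper proves in Section~\ref{sec:positive} before Theorem~\ref{main_thm_positive}, by connecting any two ideal triangulations of a finite subpolygon by flips and propagating positivity through a flip using the concatenation property of Fact~\ref{properties_positive_tuple}. Since you state it as a needed hypothesis rather than prove it, the argument is incomplete at its crux; everything else in your outline (positive $\Rightarrow$ transverse, restriction of the parametrization of Theorem~\ref{intro:main_thm}) is the easy part.

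There is also a gap in your parameter-by-parameter characterization. In the normalization of Theorem~\ref{main_thm}, positivity of the single quadrilateral straddling an identified boundary edge $e$ translates into the condition $\ell_e^{-1}u'\ell_e\in\Upp$, where $u'$ is the parameter of the adjacent triangle; this is strictly weaker than $\ell_e\in\L_0$ whenever $\L_0\subsetneq\L$ (e.g.\ $G=\SO_0(1+p,n+p)$ with $p$ odd), so ``the edge parameter lies in $\L_0$ precisely when the resulting quadruple is positive'' is not justified as stated. The correct mechanism, which the paper uses, is Fact~\ref{properties_positive_tuple}: for a \emph{positive} tuple the normalizing element $g$ is unique up to left multiplication by $\L_0$ (not merely $\L$), and this is what replaces ``the stabilizer of $(\Pp,\omega\Pp)$ is $\L$'' when one reruns the construction of Theorem~\ref{main_thm} with positive data, forcing both $u_v\in\Upp$ and $\ell_e\in\L_0$. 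Relatedly, passing from the $\L$-quotient of Theorem~\ref{intro:main_thm} to the $\L_0$-quotient requires knowing that two parameter tuples in $(\Upp)^{-2\chi(S)}\times\L_0^{1-\chi(S)}$ that are $\L$-conjugate are already $\L_0$-conjugate; you do not address this, whereas it is automatic in the paper's approach because the whole construction is only ambiguous up to $\L_0$ from the start.
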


As corollary of this result, we obtain a description of the homotopy type of the space of positive framed representations:

\begin{cor}
The space of positive framed representations is homotopy equivalent to $K_0^{1-\chi(S)}/K_0$, where $K_0$ is a maximal compact subgroup of $\L_0$.
\end{cor}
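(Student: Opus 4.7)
By Theorem~\ref{intro:main_thm_positive}, the space of positive framed representations is homeomorphic to
\[
X \coloneqq \bigl((\Upp)^{-2\chi(S)} \times \L_0^{1-\chi(S)}\bigr)/\L_0,
\]
with $\L_0$ acting by diagonal conjugation on the two factors. The plan is to reduce $X$ to $K_0^{1-\chi(S)}/K_0$ by a chain of two deformation retractions, performed at the level of the total space and then descended to the quotient.

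\medskip
\noindent\textbf{Step 1: contract the $\Upp$ factors.} The positive semigroup $\Upp$ is, by its parametrization coming from the positive structure, diffeomorphic to $\R_{>0}^N$ for some $N$, so in particular $(\Upp)^{-2\chi(S)}$ is contractible. Although $\Upp$ does not carry a canonical $\L_0$-fixed point, the $\L_0$-action on the Lusztig-type coordinates is by positive rescalings, so a linear contraction in logarithmic coordinates towards any choice of ``base configuration'' descends to the quotient by $\L_0$. I would use this to build a deformation retraction from $X$ onto $\L_0^{1-\chi(S)}/\L_0$.

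\medskip
\noindent\textbf{Step 2: retract $\L_0$ onto $K_0$.} The Cartan decomposition $\L_0 = K_0 \exp(\mathfrak{p}_0)$ provides the retraction $r_t(k\exp(X)) = k\exp((1-t)X)$ of $\L_0$ onto $K_0$, which is $K_0$-equivariant with respect to conjugation. Applying $r_t$ factorwise on $\L_0^{1-\chi(S)}$ and passing to the $\L_0$-quotient, one obtains the homotopy equivalence
\[
\L_0^{1-\chi(S)}/\L_0 \;\simeq\; K_0^{1-\chi(S)}/K_0,
\]
yielding the statement when combined with Step~1.

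\medskip
\noindent\textbf{Main obstacle.} The delicate point in both steps is that the retractions are not fully $\L_0$-equivariant: in Step~1 because $\Upp$ has no $\L_0$-fixed point in general, and in Step~2 because the polar retraction is only $K_0$-equivariant. The technical heart of the proof is therefore to check that after passing to the quotient by the diagonal conjugation action of $\L_0$ the homotopies become well defined and give genuine homotopy equivalences. For Step~1 this uses the compatibility of the $\L_0$-action with the semigroup coordinates. For Step~2 this follows from a Kempf--Ness-type argument for real reductive groups: the diagonal conjugation action of $\L_0$ on $\L_0^b$ (here $b = 1-\chi(S) \geq 1$) is proper, every orbit closure meets $K_0^b$, and the inclusion $K_0^b/K_0 \hookrightarrow \L_0^b/\L_0$ is a deformation retract.
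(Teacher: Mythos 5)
There is a genuine gap, and it lies exactly at the point you flag as the ``main obstacle'': neither of your two steps actually survives passage to the quotient. In Step~1, a homotopy that is not $\L_0$-equivariant simply does not descend to $\bigl((\Upp)^{p_e-2\chi(S)}\times\L_0^{1-\chi(S)}\bigr)/\L_0$, and the proposed repair is based on a false premise: the $\L_0$-action on the Lusztig-type coordinates is the adjoint action on the cones $\mathring c_\beta$, not a positive rescaling of coordinates (for $G=\Sp_{2n}(\R)$ it is $\GL_n(\R)$ acting on $\Sym_n^+(\R)$ by $B\mapsto gBg^t$), and $\Upp$ has no $\L_0$-fixed point, so no contraction of the $\Upp$-factors alone can be made equivariant. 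In Step~2 the Kempf--Ness-type assertions are false: the conjugation action of $\L_0$ on $\L_0^{b}$ is never proper (the stabilizer of the identity tuple is all of the noncompact group $\L_0$), and orbit closures do not meet $K_0^{b}$ in general (the conjugacy class of a hyperbolic element such as $\diag(2,1/2,\dots)$ has eigenvalues as invariants and its closure misses the maximal compact subgroup). Moreover the intermediate space $\L_0^{1-\chi(S)}/\L_0$ is a badly behaved (non-Hausdorff) conjugation quotient, so even the target of Step~1 is problematic.

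The paper proceeds in the opposite order, and this is the missing idea: first reduce the acting group from $\L_0$ to $K_0$, and only then contract. By Proposition~\ref{Levi-symmetric_space}, $\L_0$ acts transitively on $\prod_{\beta\in\Theta}\mathring c_\beta$ with stabilizer a maximal compact subgroup $K_0$ (this product is the symmetric space of $\L_0$); combining this with the polar decomposition of $\L_0$, one copy of $\prod_\beta\mathring c_\beta$ absorbs the noncompact part of the group, which yields the last homeomorphism of Theorem~\ref{main_thm_positive}:
$$\Rep_+^{\fr}(\pi_1(S,b),G)\;\cong\;\Bigl(\prod_{\beta\in\Theta}(\mathring c_\beta)^{n_\beta(p_e-2\chi(S))-\chi(S)}\times K_0^{1-\chi(S)}\Bigr)/K_0.$$
Now the quotient is by the \emph{compact} group $K_0=\Stab_{\L_0}(u_\Theta)$, which fixes the chosen basepoints $v_\beta\in\mathring c_\beta$ and acts linearly on each cone; the straight-line retraction $(v,t)\mapsto tv+(1-t)v_\beta$ is therefore genuinely $K_0$-equivariant, descends to the quotient, and gives a strong deformation retract onto $K_0^{1-\chi(S)}/K_0$ (Proposition~\ref{retraction}). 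If you want to salvage your argument, you should replace your two steps by this reduction-of-structure-group step; as written, both the descent in Step~1 and the properness/orbit-closure claims in Step~2 are unjustified or false.
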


This immediately gives a count of the number of connected components of the space of positive framed representations. If $G$ is a connected Lie group, by the following Corollary, we also obtain a count of the connected components of the space of positive representations (without any framing), see Table~\ref{tab:conn_comp}.

\begin{cor}
The space of positive framed representations and the space of positive non-framed representations have the same number of connected components.
\end{cor}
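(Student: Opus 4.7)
The plan is to study the forgetful map $\pi$ from the space of positive framed representations to the space of positive representations, given by $(\rho, F_1, \ldots, F_k) \mapsto \rho$, and to show that it induces a bijection on the sets of connected components.

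The first step is to construct a continuous section $s$ of $\pi$. For a positive representation $\rho$, each peripheral holonomy $\rho(c_i)$ is a positive element of $G$ in the sense of~\cite{GW1,GLW}, and therefore carries a canonical attracting fixed flag $F_i^+(\rho) \in \F$ that depends continuously on $\rho$. The tuple $(F_1^+(\rho), \ldots, F_k^+(\rho))$ defines a positive framing, so $s(\rho) = (\rho, F_1^+(\rho), \ldots, F_k^+(\rho))$ is a continuous section of $\pi$. In particular $\pi$ is surjective, and the induced map $\pi_*$ on $\pi_0$ is surjective.

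The second step is to show that the fibers of $\pi$ are connected. The fiber over $\rho$ consists of $k$-tuples $(F_1, \ldots, F_k)$ with $F_i$ fixed by $\rho(c_i)$ and such that the full framed representation is positive. The rigidity of positive tuples of flags from~\cite{GW1,GW2,GLW} should imply that for each $i$ the set of such admissible flags is connected (and in many natural examples reduces to the single point $F_i^+(\rho)$). Granted this, any framed representation $(\rho, F_1, \ldots, F_k)$ is connected to the canonical framing $s(\rho)$ within its fiber; concatenating such a path in the fiber with a lift $s(\gamma)$ of a path $\gamma$ between underlying representations shows that $\pi_*$ is also injective. Together with the first step, this establishes the bijection on $\pi_0$.

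The main obstacle lies in the second step: showing connectedness of the fibers. This reduces to describing, for a given peripheral positive element $g \in G$, the set of flags fixed by $g$ that extend a given positive configuration to a larger positive one, and it requires a careful use of the structure theory of positive elements and of positive tuples of flags developed in~\cite{GW1,GW2,GLW}. Once this is in hand, the numerical count of connected components of the positive (non-framed) representation variety reduces to the count already obtained for the framed variety via the parametrization of Theorem~\ref{intro:main_thm_positive} and its homotopy-type corollary.
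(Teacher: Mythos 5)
There is a genuine gap, and it sits exactly where you locate it: the connectedness of the fibers of the forgetful map is not just a technical point left open, it is the wrong thing to aim for. The fiber over a positive representation $\rho$ is the set of positive framings of $\rho$, and this set can be disconnected -- typically it is a finite set with several points when the peripheral holonomies have several fixed flags compatible with positivity (already for groups isogenic to $\SL_2(\R)$ a boundary holonomy of hyperbolic type fixes two points). A bijection on $\pi_0$ can still hold in that situation, because distinct framings over the same $\rho$ may be joined by a path in the total space that moves the underlying representation; but that is precisely what an argument via ``connected fibers plus a section'' cannot see. Your first step is also problematic: peripheral holonomies of positive representations of a punctured surface need not be proximal, so there is no canonical ``attracting fixed flag'' in general. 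Indeed, the representations the paper uses as anchors (the degenerate ones) have peripheral holonomy conjugate to $ku_\Theta^s$ with $k$ in a compact subgroup, and across the transition between such holonomies and hyperbolic-like ones no continuous choice of fixed flag exists; a continuous section of the forgetful map should not be expected.

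The paper's proof avoids both difficulties. Injectivity on $\pi_0$ is obtained from two ingredients: first, the forgetful map has the \emph{path lifting property}, proved not by studying fibers of the framed-to-unframed map directly but by factoring through $\Hom_+\to\Rep_+$ (fibers are orbits of the connected group $G$) and $\Hom^{\fr}_+\to\Hom_+$ (reduced to the algebraic, piecewise-linear surjection $\Ad(G)\times\Pp\to\mathfrak P$, which has the path lifting property); second, the strong deformation retraction of $\Rep_+^{\fr}$ onto the locus $\mathcal D$ of degenerate representations, each of which admits a \emph{unique} framing. Given two degenerate points in the same component of $\Rep_+$, one lifts a connecting path to $\Rep_+^{\fr}$ (path lifting), with endpoints forced to be the given points by uniqueness of the framing on $\mathcal D$, and then retracts the lifted path into $\mathcal D$. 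The uniqueness of framings on the retract is what replaces both your section and your fiber-connectedness claim; without an ingredient of this kind your argument does not close.
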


\begin{table}[ht]
\begin{center}
\begin{tabular}{c|c}
Group & Number of connected components \\
\hline
Adjoint form of split real groups & 1 \\
\hline
Hermitian Lie groups & \\
$\Sp_{2n}(\R)$ & $2^{1-\chi(S)}$ \\
$\PSp_{2n}(\R)$, $n$ even & $2^{1-\chi(S)}$ \\
$\PSp_{2n}(\R)$, $n$ odd & 1 \\
$\UU(n,n)$ & 1 \\
$\SU(n,n)$ & 1 \\
$\SO^*(4n)$ & 1 \\
$E_{7(-25)}$ & 1\\
\hline
Orthogonal indefinite groups & \\
$\SO_0(1+p,n+p)$, $p$ odd & $2^{1-\chi(S)}$\\
$\SO_0(1+p,n+p)$, $p$ even & 1\\
\hline
Exceptional family & \\
$F_{4(4)}$ & 1\\
$E_{6(2)}$ & 1\\
$E_{7(-5)}$ & 1\\
$E_{8(-24)}$ & 1\\
\end{tabular}
\caption{Number of connected components of spaces of positive representations}\label{tab:conn_comp}
\end{center}
\end{table}
For exceptional Lie groups, we always consider the adjoint form. For further details about the topology of spaces of positive representations, we refer the reader to Section~\ref{sec:examples}.

\addtocontents{toc}{\SkipTocEntry}
\subsection*{Structure of the paper:}
In Section~\ref{sec:preliminaries} we recall some facts on the Lie theory and the notion of positivity for Lie group following~\cite{GW1,GW2,GLW}.
In Section~\ref{sec:top_data} we introduce punctured surfaces, ideal triangulations and some special kind of graphs on surfaces associated with triangulations that we are using later.
In Section~\ref{sec:framed_reps}, we introduce the spaces of framed representations and local systems on trees and describe the connection between them.
In Section~\ref{section:parametrization} we prove Theorem~\ref{intro:main_thm} on the parametrization of framed representations.
In Section~\ref{sec:positive} we introduce positive framed representations and describe several parametrizations of them, in particular, we prove Theorem~\ref{intro:main_thm_positive}.
In Section~\ref{sec:homotopy_type} we describe the homotopy type of the space of positive framed representations.
In Section~\ref{sec:connected_components} we prove that the number of connected components of the space of positive framed representations coincides with the number of connected components of the space of positive non-framed representations.
In Section~\ref{sec:examples} we apply the parametrizations from Sections~\ref{section:parametrization}~and~\ref{sec:positive} to understand explicitly the topology of spaces of transverse framed and positive framed representations for some Lie groups.

\section{Preliminary observations}\label{sec:preliminaries}

\subsection{Semisimple Lie groups and flag varieties}

Let $G$ be a connected semisimple Lie group with identity element $e$. Let $\Theta$ be a subset of the set of simple roots $\Delta$ of $\g\coloneqq \Lie(G)$. Let $\Pp$ (resp. $\Pm$) be the parabolic subgroup associated with $\Theta$.
We assume that $\Pp$ is self-opposite. By the Levi decomposition, $\Pp$ (resp. $\Pm$) is a semidirect product of its unipotent radical $\Up$ (resp. $\Um$) and the Levi factor $\L=\Pp\cap\Pm$.

Let $W$ be the Weyl group of $G$ and $w^0$ be the longest element of $W$. Let $\omega\in G$ be a lift of $w^0$. We fix $\omega$ once and for all.
Since $\Pp$ is self-opposite, $\omega\Pp\omega^{-1}=\Pm$ and $\omega^2\in \L$.
We consider the flag variety: $\F=G/\Pp$. The group $G$ acts on $\F$ transitively. We call elements of $\F$ \defin{flags}.

\begin{df}
Two flags $F_1,F_2\in\F$ are \defin{transverse} if there exists $g\in G$ such that $g(F_1,F_2)=(\Pp,\omega\Pp)$.
\end{df}

\begin{rem}\begin{itemize}
    \item The relation on $\F$ to be transverse is symmetric because $\omega(\Pp,\omega\Pp)=(\omega\Pp,\omega^2\Pp)=(\omega\Pp,\Pp)$.
    \item The stabilizer in $G$ of the pair $(\Pp,\omega\Pp)$ is equal to the Levi subgroup $\L$.
\end{itemize}
\end{rem}

\begin{prop}
For every flag $F\in\F$ which is transverse to $\Pp$ there exists a unique $u\in \Up$ such that $F=u\omega\Pp$.
\end{prop}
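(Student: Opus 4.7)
The plan is to prove existence and uniqueness separately, using only the definition of transversality together with the Levi decomposition $\Pp=\L\ltimes\Up$ and the stated fact that the stabilizer of the pair $(\Pp,\omega\Pp)$ is $\L$.

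For \textbf{existence}, I would start from the definition: since $F$ is transverse to $\Pp$, there is some $g\in G$ sending the pair $(\Pp,F)$ to $(\Pp,\omega\Pp)$. The first coordinate condition $g\Pp=\Pp$ puts $g$ in the stabilizer of $\Pp$, which is $\Pp$ itself; equivalently $g^{-1}\in\Pp$. Applying the Levi decomposition, I write $g^{-1}=u\ell$ with $u\in\Up$ and $\ell\in\L$. Then $F=g^{-1}\omega\Pp=u\ell\omega\Pp$, and since the remark just before the proposition identifies $\L$ with the stabilizer of the pair $(\Pp,\omega\Pp)$, one has in particular $\ell\omega\Pp=\omega\Pp$. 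Therefore $F=u\omega\Pp$ with $u\in\Up$, which gives existence.

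For \textbf{uniqueness}, suppose $u\omega\Pp=u'\omega\Pp$ with $u,u'\in\Up$. Then $u'^{-1}u\in\omega\Pp\omega^{-1}=\Pm$. On the other hand $u'^{-1}u\in\Up\subset\Pp$, so $u'^{-1}u\in\Pp\cap\Pm=\L$. Combined with $u'^{-1}u\in\Up$ and the fact that the Levi decomposition $\Pp=\L\ltimes\Up$ is a semidirect product (so $\Up\cap\L=\{e\}$), this forces $u=u'$.

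The argument is quite short and I do not foresee a serious obstacle; the only non-cosmetic ingredient is the identity $\Up\cap\Pm=\{e\}$, which I reduce to the two standard facts $\Pp\cap\Pm=\L$ and $\Up\cap\L=\{e\}$. If one prefers to avoid invoking $\Pp\cap\Pm=\L$ explicitly, an alternative is to conjugate by $\omega$ and use $\omega^{-1}\Up\omega=\Um$ to land in $\Um\cap\Pp$, and then run the same semidirect product argument on the opposite side; either route is routine once the Levi structure is in hand.
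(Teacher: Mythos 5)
Your proposal is correct and follows essentially the same route as the paper: transversality plus $\Stab_G(\Pp)=\Pp$ and the Levi decomposition $g^{-1}=u\ell$ give existence, and uniqueness comes down to $\Up\cap\Pm=\{e\}$. The only difference is cosmetic — you spell out $\Up\cap\Pm\subset\Pp\cap\Pm=\L$ and $\Up\cap\L=\{e\}$, which the paper uses implicitly in the single step $u^{-1}u'\in\Pm\cap\Up=\{e\}$.
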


\begin{proof}
Since the pair $(\Pp,F)$ is transverse, there exists $g\in G$ such that $g(\Pp,F)=(\Pp,\omega\Pp)$. In particular, $g\in\Pp=\Stab_G(\Pp)$. Further, $F=g^{-1}\omega\Pp$. By the Levi decomposition, $g^{-1}=u\ell$ where $u\in\Up$, $\ell\in\L$. Since $\ell\omega\Pp=\omega\Pp$, we obtain $F=u\omega\Pp$.

Let $u'\in\Up$ be another element such that $F=u'\omega\Pp$. Therefore, $\omega\Pp=u^{-1}u'\omega\Pp$. This means that $u^{-1}u'\in\Pm\cap\Up=\{e\}$, i.e.\ $u=u'$.
\end{proof}

\subsection{Triples and quadruples of flags}
\begin{lem}\label{lem:triple}
Let $(F_1,F_2,F_3)$ be a triple of pairwise transverse flags. There exist an element $g\in G$ such that
$g(F_1)=\Pp$, $g(F_2)=\omega\Pp$, $g(F_3)=u\omega\Pp$ where $u\in\Up\cap\Pm\omega\Pm$.
\end{lem}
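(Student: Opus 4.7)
The plan is to bring the pair $(F_1,F_2)$ into standard position and then apply the preceding proposition to describe $F_3$ relative to that standard pair. By definition of transversality for $(F_1,F_2)$, there exists $g\in G$ with $g(F_1)=\Pp$ and $g(F_2)=\omega\Pp$; replacing $(F_1,F_2,F_3)$ by its $g$-image (which does not affect pairwise transversality), I may assume $F_1=\Pp$ and $F_2=\omega\Pp$. Since $F_3$ is transverse to $F_1=\Pp$, the preceding proposition yields a unique $u\in\Up$ with $F_3=u\omega\Pp$. It remains to verify that this $u$ lies in the double coset $\Pm\omega\Pm$.

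For this, I use the remaining transversality, namely that $F_3=u\omega\Pp$ is transverse to $F_2=\omega\Pp$. Since $\Stab_G(\omega\Pp)=\omega\Pp\omega^{-1}=\Pm$, transversality to $\omega\Pp$ is equivalent to membership in the $\Pm$-orbit of $\Pp$, which (using $\Pm=\Um\L$ and $\L\subset\Pp$) equals $\Um\Pp/\Pp$. Hence there exist $u^-\in\Um$ and $p\in\Pp$ such that $u\omega=u^-p$, and in particular $u\omega\in\Pm\Pp$, so $u\in\Pm\Pp\omega^{-1}$.

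Finally, I massage this double coset. Using $\omega\Pp\omega^{-1}=\Pm$ I get $\Pp\omega^{-1}=\omega^{-1}\Pm$, whence $u\in\Pm\omega^{-1}\Pm$. Writing $\omega^{-1}=\omega\cdot\omega^{-2}$ and using $\omega^{2}\in\L\subset\Pm$ to absorb $\omega^{-2}$ into the right-hand $\Pm$-factor gives $\Pm\omega^{-1}\Pm=\Pm\omega\Pm$. Therefore $u\in\Up\cap\Pm\omega\Pm$, as required.

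The argument is essentially bookkeeping with Bruhat-type decompositions; the only place that requires a little care is the last paragraph, where one must use both basic facts about $\omega$ (namely $\omega\Pp\omega^{-1}=\Pm$ and $\omega^{2}\in\L$) to convert the naturally-appearing $\omega^{-1}$ into the $\omega$ required by the statement. I do not expect any genuine obstacle here.
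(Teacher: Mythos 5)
Your argument is correct and follows essentially the same route as the paper: reduce to the standard position $(\Pp,\omega\Pp,u\omega\Pp)$ via the preceding proposition, then exploit transversality of $F_3$ with $F_2=\omega\Pp$ through the stabilizer $\Stab_G(\omega\Pp)=\Pm$. The only cosmetic difference is that the paper solves for $u=p''\omega p'$ directly from a transversality witness $g=p\omega^{-1}$, whereas you first land in $\Pm\omega^{-1}\Pm$ and then use $\omega^{2}\in\L$ to rewrite it as $\Pm\omega\Pm$; this is the same bookkeeping.
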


\begin{proof}
Indeed, up to $G$-action, we can assume $F_1=\Pp$, $F_2=\omega\Pp$, $F_3=u\omega\Pp$ for some $u\in\Up$. Since $F_2$ and $F_3$ are transverse, there exists $g\in G$ such that $gF_2=\Pp$, $gF_3=\omega\Pp$. From the first equality, we obtain $g=p\omega^{-1}$ for $p\in\Pp$. From the second one: $p\omega^{-1} u\omega\Pp=\omega\Pp$, i.e.\ $p\omega^{-1} u\in \Pm$. This means there exists $p'\in\Pm$ such that $p\omega^{-1} u=p'$, i.e.\ $u=\omega p p'=p''\omega p'$ where $p''=\omega p \omega^{-1}\in\Pm$. So $u\in\Pm\omega\Pm$.
\end{proof}

We denote $\Up_*\coloneqq\Up\cap \Pm\omega\Pm$. This is an open dense subset of $\Up$. The Levi factor $\L$ acts on $\Up_*$.

\begin{df}
We denote by $\Conf_3^*(\F)$ the space of transverse triples of flags in $\F$ up to the action of $G$.
\end{df}

The following Propositions are immediate:

\begin{prop}
The space $\Conf_3^*(\F)$ is homeomorphic to $\Up_*/\L$ where $\L$ acts by conjugation on $\Up_*$.
\end{prop}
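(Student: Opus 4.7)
The plan is to exhibit an explicit continuous bijection $\overline{\varphi}\colon\Up_*/\L\to\Conf_3^*(\F)$ coming directly from Lemma~\ref{lem:triple}, and then to promote it to a homeomorphism via the principal-bundle structure of $G\to G/\L$. I would first set
\[
\varphi\colon\Up_*\longrightarrow\Conf_3^*(\F),\qquad u\longmapsto[(\Pp,\omega\Pp,u\omega\Pp)],
\]
which is continuous by construction and surjective by Lemma~\ref{lem:triple}. Since any $\ell\in\L$ fixes both $\Pp$ and $\omega\Pp$, the identity $\ell\cdot(u\omega\Pp)=(\ell u\ell^{-1})\omega\Pp$ shows $\varphi(\ell u\ell^{-1})=\varphi(u)$, so $\varphi$ factors through a continuous surjection $\overline{\varphi}\colon\Up_*/\L\to\Conf_3^*(\F)$.

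For injectivity I would argue as follows. Suppose $g(\Pp,\omega\Pp,u\omega\Pp)=(\Pp,\omega\Pp,u'\omega\Pp)$. The first two equalities force $g\in\Stab_G(\Pp,\omega\Pp)=\L$, and the third then gives $(u')^{-1}gu\in\Stab_G(\omega\Pp)=\omega\Pp\omega^{-1}=\Pm$. On the other hand $(u')^{-1}gu\in\Pp$, because $\L$ normalizes $\Up$, so this element lies in $\Pp\cap\Pm=\L$. Writing $gu$ in two ways as a product in $\Up\cdot\L$, namely $gu=(gug^{-1})\,g$ and $gu=u'\cdot((u')^{-1}gu)$, the uniqueness of the Levi decomposition yields $u'=gug^{-1}$, i.e.\ $[u]=[u']$ in $\Up_*/\L$. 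Combined with the previous paragraph, $\overline{\varphi}$ is a continuous bijection.

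The step I expect to require the most care is upgrading this continuous bijection to a homeomorphism. I would do so by identifying the space $\F^3_*$ of pairwise transverse triples with the associated bundle $G\times_\L\Up_*$, where $\L$ acts on $G$ by right multiplication and on $\Up_*$ by conjugation, via the map $[g,u]\mapsto g(\Pp,\omega\Pp,u\omega\Pp)$. A direct check using $\ell\omega\Pp=\omega\Pp$ gives well-definedness, and the previous paragraph shows the map is a $G$-equivariant continuous bijection. Since the projection $G\to G/\L$, which realizes the space of transverse pairs of flags, is a principal $\L$-bundle admitting local sections, any such section locally trivializes both sides and shows the map is a homeomorphism. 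Passing to $G$-orbits yields
\[
\Conf_3^*(\F)\;\cong\;(G\times_\L\Up_*)/G\;\cong\;\Up_*/\L.
\]
All the algebraic content is already furnished by Lemma~\ref{lem:triple} and the Levi decomposition; only this last topological comparison lies outside a pure computation, which is no doubt why the authors label the statement as immediate.
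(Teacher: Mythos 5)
The paper offers no written argument here (the proposition is declared ``immediate''), so there is nothing to compare line by line; your write-up is the natural argument the authors have in mind, and it is correct. The algebraic core — surjectivity via Lemma~\ref{lem:triple}, injectivity via $\Stab_G(\Pp,\omega\Pp)=\L$ together with uniqueness in the Levi decomposition $\Pp=\Up\rtimes\L$, and the identification of the space of pairwise transverse triples with $G\times_\L\Up_*$ — is exactly right. Two small points deserve a word if you write this out in full. First, well-definedness of $\varphi$ (that $(\Pp,\omega\Pp,u\omega\Pp)$ is pairwise transverse for every $u\in\Up_*$) is the converse of Lemma~\ref{lem:triple}; it follows by reversing that lemma's computation, writing $u=p_1\omega p_2$ with $p_1,p_2\in\Pm$ and noting $u\omega\Pp=p_1\Pp$, but it is not literally what the lemma states. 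Second, in the local-trivialization step, saying that a section of $G\to G/\L$ ``trivializes both sides'' silently uses that the fibrewise correspondence $u\mapsto u\omega\Pp$ is a homeomorphism from $\Up$ onto the open set of flags transverse to $\Pp$ (i.e.\ that the element $u$ of Proposition~2.2 depends continuously on the flag); this is the standard fact that $\Up$ acts simply transitively on the open cell, but it is the actual content making the inverse continuous, so it should be cited or proved rather than absorbed into the phrase about trivialization. With those two remarks added, your proof is complete.
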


\begin{prop}\label{prop:quadr}
Let $(F_1,F_2,F_3,F_4)$ be a quadruple of flags such that the triples $(F_1,F_3,F_4)$ and $(F_1,F_2,F_3)$ are transverse. Then there exists $g\in G$ such that
$$g(F_1,F_2,F_3,F_4)=(\Pp,\omega u' \omega\Pp,\omega\Pp, u\omega\Pp)=(\Pp,(u'')^{-1} \omega\Pp,\omega\Pp, u\omega\Pp)$$
where $u,u',u''\in\Up_*$. The element $g$ is unique up to the left multiplication by an element of $\L$, i.e.\ as an element of $\L\setminus G=\{\L g\mid g\in G\}$.
\end{prop}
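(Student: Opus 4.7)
The plan is to derive the claim from a single application of Lemma~\ref{lem:triple} to the transverse triple $(F_1,F_3,F_4)$. This produces $g\in G$ with $gF_1=\Pp$, $gF_3=\omega\Pp$, and $gF_4=u\omega\Pp$ for some $u\in\Up_*$, fixing three of the four entries of the target tuple. What remains is to parametrize $gF_2$ in the two prescribed ways and to verify that $g$ is uniquely determined modulo left multiplication by $\L$.

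For the first form, the transversality of $F_2$ with $F_1$ makes $gF_2$ transverse to $\Pp$, so the proposition preceding Lemma~\ref{lem:triple} yields a unique $v\in\Up$ with $gF_2=v\omega\Pp$; setting $u'':=v^{-1}$ immediately gives $gF_2=(u'')^{-1}\omega\Pp$. To upgrade $u''$ to an element of $\Up_*$, I will replay the Bruhat argument in the proof of Lemma~\ref{lem:triple}: the transversality $v\omega\Pp\pitchfork\omega\Pp$ (coming from $F_2\pitchfork F_3$) forces $v\in\Pm\omega\Pm$, hence $v\in\Up_*$. The stability of $\Up_*$ under inversion, which follows from $\omega^{-1}=\omega^{-2}\cdot\omega\in\Pm\omega\Pm$ (since $\omega^{-2}\in\L\subset\Pm$), then yields $u''=v^{-1}\in\Up_*$.

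For the second form, the key identity is $\omega^{-1}\Pp=\omega\Pp$, which follows from $\omega^{-2}\in\L\subset\Pp$. Applying $\omega^{-1}$ to $gF_2=v\omega\Pp$ produces a flag $\omega^{-1}v\omega\Pp$ transverse to $\omega^{-1}\Pp=\omega\Pp$ and to $\omega^{-1}\omega\Pp=\Pp$; the preceding proposition then gives a unique $u'\in\Up$ with $\omega^{-1}v\omega\Pp=u'\omega\Pp$, which rearranges to $gF_2=\omega u'\omega\Pp$. The transversality of $u'\omega\Pp$ with $\omega\Pp$ again forces $u'\in\Up_*$ by the same Bruhat argument.

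Finally, uniqueness of $g$ modulo $\L$ is automatic: any other $g'$ with $g'F_1=\Pp$ and $g'F_3=\omega\Pp$ satisfies $g'g^{-1}\in\Stab_G(\Pp)\cap\Stab_G(\omega\Pp)=\L$. The only delicate bit of bookkeeping in this plan is the passage between the two descriptions of $gF_2$ via conjugation by $\omega$; this rests entirely on the self-opposite hypothesis $\omega^2\in\L$, which ensures that $\omega^{-1}\Pp=\omega\Pp$ and that $\omega^{-1}\in\Pm\omega\Pm$, so everything else reduces cleanly to Lemma~\ref{lem:triple} and the preceding proposition.
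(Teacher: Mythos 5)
Your proof is correct: applying Lemma~\ref{lem:triple} to $(F_1,F_3,F_4)$, then parametrizing $gF_2$ via the uniqueness proposition together with the Bruhat-type transversality argument (and the identities $\omega^{-1}\Pp=\omega\Pp$, $\omega^{-2}\in\L$, which also give stability of $\Up_*$ under inversion), and finally identifying $\Stab_G(\Pp,\omega\Pp)=\L$ for uniqueness, is exactly the argument the paper has in mind when it declares the proposition immediate. No gaps; this is the same route, just written out in full.
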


The following proposition is quite technical, we will need it later to understand groups with positive structure. But since this proposition holds in general, we state it here.

\begin{prop}\label{uom_k}
Let $u\in\Up$ such that $(u\omega)^k\in\Pp$ for some $k\in\N$. Then $(u\omega)^k=(\omega u)^k\in\Norm_\L(u)$.
\end{prop}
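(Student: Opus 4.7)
The plan is to use two purely formal identities relating the words $(u\omega)^k$ and $(\omega u)^k$, combine them with the hypothesis to force both products into $\L=\Pp\cap\Pm$, and then equate them via the standard trick $\Up\cap\L=\{e\}$.

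First I would record two unconditional identities. The regrouping $u\omega u\omega\cdots u\omega\cdot u=u\cdot\omega u\omega u\cdots\omega u$ gives $(u\omega)^k u=u(\omega u)^k$, so
\[
(\omega u)^k=u^{-1}(u\omega)^k u.
\]
A cyclic shift gives
\[
(\omega u)^k=\omega(u\omega)^k\omega^{-1}.
\]
Thus $(\omega u)^k$ is conjugate to $(u\omega)^k$ simultaneously by $u\in\Up\subset\Pp$ and by $\omega$.

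Assuming $(u\omega)^k\in\Pp$, the first conjugation forces $(\omega u)^k\in\Pp$, while the second forces $(\omega u)^k\in\omega\Pp\omega^{-1}=\Pm$; hence $(\omega u)^k\in\L$. Since $\omega\L\omega^{-1}=\omega(\Pp\cap\Pm)\omega^{-1}=\Pm\cap\Pp=\L$, the second identity read backwards also gives $(u\omega)^k\in\L$.

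It remains to identify the two products. Setting $\ell:=(\omega u)^k\in\L$, the first identity reads $(u\omega)^k=u\ell u^{-1}$, which lies in $\L$ by the previous step. I would then examine the element $u\ell u^{-1}\ell^{-1}=u\cdot(\ell u\ell^{-1})^{-1}$. Because $\L$ normalizes $\Up$, both $u$ and $\ell u\ell^{-1}$ lie in $\Up$, so this element lies in $\Up$; on the other hand it is a product of two elements of $\L$. Hence it lies in $\Up\cap\L=\{e\}$, yielding simultaneously $(u\omega)^k=\ell=(\omega u)^k$ and $\ell u\ell^{-1}=u$. The latter means $\ell$ centralizes $u$, so $\ell\in\Norm_\L(u)$.

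I do not foresee a real obstacle here: once the two identities are written down and combined with $\Up\cap\L=\{e\}$, the argument is formal. The only point requiring a brief verification is that $\omega$ normalizes $\L$, which follows directly from the self-opposite hypothesis $\omega\Pp\omega^{-1}=\Pm$.
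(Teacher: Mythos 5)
Your proof is correct and follows essentially the same route as the paper: the identities $(\omega u)^k=u^{-1}(u\omega)^k u=\omega(u\omega)^k\omega^{-1}$ place both elements in $\Pp\cap\Pm=\L$, and then the Levi decomposition of $\Pp$ (equivalently $\Up\cap\L=\{e\}$, with $\L$ normalizing $\Up$) forces the commutator-type element to be trivial, giving $(u\omega)^k=(\omega u)^k$ and that this element centralizes $u$. The only cosmetic difference is that you phrase the last step via $u\ell u^{-1}\ell^{-1}\in\Up\cap\L$ rather than the paper's decomposition of $(u\omega)^{-k}u(u\omega)^k$, and your remark that $\omega$ normalizes $\L$ also uses $\omega^2\in\L$ (not just $\omega\Pp\omega^{-1}=\Pm$), which is part of the paper's standing assumptions.
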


\begin{proof}
Since $(\omega u)^k=u^{-1} (u\omega)^k u$, we obtain $(\omega u)^k\in\Pp$. Similarly $(u^{-1}\omega)^k=c^{-k}(\omega u)^{-k}\in\Pp$ and $(\omega u)^{-k}=c^{-k}(u \omega)^{-k}\in\Pp$ where $c=\omega^2\in Z(G)$.

Further, since $(u\omega)^k=\omega^{-1}(\omega u)^k\omega\in\omega^{-1}\Pp\omega=\Pm$. Similarly $(\omega u)^k, (\omega u^{-1})^k, (u^{-1}\omega)^k\in \Pm$. So all this four elements are in $\Pp\cap\Pm=\L$.

Now, $(u\omega)^{-k}u(u \omega)^k=(u\omega)^{-k+1}\omega^{-1} u^{-1} u (u\omega)^{k}= u(\omega u)^{-k}(u\omega)^{k}\in\Up$. Since $(\omega u)^{-k}(u\omega)^{k}\in \L$ and $\Pp$ is the semidirect product of $\Up$ and $\L$, we get $(\omega u)^{-k}(u\omega)^{k}=1$, i.e.\ $(u\omega)^{k}=(\omega u)^k$ and $(u\omega)^{-k}u(u \omega)^k=u$.
\end{proof}

\subsection{Groups with positive structure}\label{positive_groups}
In this section we follow the discussion from~\cite{GW1,GW2,GLW}.

Let $G$ be a semisimple Lie group with a $\Theta$-positive structure where $\Theta$ is a subset of the set of simple roots $\Delta$ of $\Lie(G)$.  The $\Theta$-positive structure on $G$ gives rise to a positive semigroup $\Upp$ of $\Up$. We denote by $\L_0$ the stabilizer of $\Upp$ in $\L$.

\begin{rem}
The subgroup $\L_0$ always contains the connected component of the identity element of $\L$ but not necessarily agree with it. In Section~\ref{sec:examples} we will see several examples of what $\L_0$ can be.
\end{rem}

Let $W$ be the Weyl group of $G$. It is generated by reflections $s_\alpha$ for all $\alpha\in\Delta$. By~\cite[Section 3.1]{GW2} there exits at most one special root $\beta_\Theta\in\Theta$. We define $\sigma_{\beta_\Theta}$ to be the longest element of the subgroup $W_{\{\beta_\Theta\}\cup(\Delta\bs\Theta)}\leq W$ generated by $s_\alpha$ for all $\alpha\in\{\beta_\Theta\}\cup(\Delta\bs\Theta)$, and $\sigma_{\beta}\coloneqq s_\beta$ for all $\beta\in\Theta\bs\{\beta_\Theta\}$. The group $W(\Theta)$ is defined to be the subgroup $W$ generated by $\sigma_\beta$ for all $\beta\in\Theta$.

Let $\Sigma_\Theta^+\coloneqq \Sigma^+\bs\Span(\Delta\bs\Theta)$ (resp. $\Sigma_\Theta^-\coloneqq -\Sigma_\Theta^+$), where $\Sigma^+$ is the set of positive roots of $\Lie(G)$. For every $\beta\in\Theta$ we consider
$$\up_\beta\coloneqq\bigoplus_{\tiny\begin{matrix}\alpha\in\Sigma^+_\Theta \\ \beta-\alpha\in\Span(\Delta\bs\Theta)\end{matrix}} \g_\alpha$$
where $\g_\alpha\subseteq \g$ is the root space corresponding to the root $\alpha$. there exists an $\L_0$-invariant proper convex cone $c_\beta\subset \up_\beta$ of full dimension (i.e.\ $c_\beta$ does not contain lines and it generates $\up_\beta$ as a vector space). We denote $\mathring c_\beta$ the interior of $c_\beta$.

There exists a map (see \cite[Theorem~1.3]{GW2}):
$$\begin{matrix}
F\colon & \up_{\beta_{i_1}}\times\dots\times \up_{\beta_{i_l}} & \to & \Up\\
& (v_{i_1},\dots,v_{i_l}) & \mapsto & \prod_{j=1}^l\exp(v_{i_j}).
\end{matrix}$$
such that the restriction $F|_{\mathring c_{\beta_{i_1}}\times\dots\times \mathring c_{\beta_{i_l}}}$ is a diffeomorphism onto $\Upp$, i.e.\ every element $u\in \Upp$ can be written in a unique way as $u=u(v_{i_1},\dots,v_{i_l})=\prod_{j=1}^l\exp(v_{i_j})$ where $v_{i_j}\in \mathring c_{\beta_{i_j}}$. Moreover, $F$ is $\L$-equivariant

\begin{prop}[{\cite[Proposition~3.7]{GW2}}]\label{Levi-symmetric_space}
The group $\L_0$ acts on $\prod_{\beta\in\Theta} \mathring c_{\beta}$ componentwise by the adjoint action. This action is transitive, and the stabilizer of every point is a maximal compact subgroup of $\L_0$. In~particular, $\prod_{\beta\in\Theta} \mathring c_{\beta}$ is a model of the symmetric space of $\L_0$.
\end{prop}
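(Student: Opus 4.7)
The plan is to verify the three assertions in sequence: transitivity of the diagonal $\L_0$-action on $\prod_{\beta\in\Theta} \mathring c_\beta$, compactness and maximality of the stabilizer, and the identification of the resulting quotient with the Riemannian symmetric space of $\L_0$.

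First, I would analyze each factor $\mathring c_\beta$ separately. By the classification of $\Theta$-positive Lie groups (split real, Hermitian of tube type, the $\SO(p,q)$-family, and the exceptional family), for every $\beta\in\Theta$ the space $\up_\beta$ is either one-dimensional or carries the structure of a Euclidean Jordan algebra, and $\mathring c_\beta$ is the corresponding open cone of ``positive'' elements. The subgroup of $\L_0$ acting on $\up_\beta$ through the adjoint representation contains the structure group of this cone, so the Koecher--Vinberg theorem yields transitivity of $\L_0$ on $\mathring c_\beta$.

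Second, I would upgrade this to transitivity on the product. Fix a basepoint $v^0 = (v^0_\beta)_{\beta\in\Theta}$ and let $K_\beta \leq \L_0$ denote the stabilizer of $v^0_\beta$ in $\L_0$. The crucial input is that $K_\beta$ still acts transitively on each of the other factors $\mathring c_{\beta'}$, $\beta'\neq\beta$; this can be extracted from the fact that the different $\up_\beta$ correspond to distinct equivalence classes of roots modulo $\Span(\Delta\setminus\Theta)$, whose coupling inside $\L_0$ is governed by a shared Cartan subalgebra. Iterating this reduction and using that each factor is connected establishes transitivity of $\L_0$ on the product.

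Third, I would identify the stabilizer $K_0 \coloneqq \Stab_{\L_0}(v^0) = \bigcap_{\beta\in\Theta} K_\beta$ with a maximal compact subgroup of $\L_0$. Each $K_\beta$ preserves a positive-definite inner product on $\up_\beta$ (the Jordan trace form, or the Euclidean structure in the rank-one case), hence $K_0$ preserves a positive-definite form on $\bigoplus_\beta \up_\beta$ and is therefore compact. Maximality is confirmed by matching $\dim \L_0 - \sum_\beta \dim \up_\beta$ against the known dimension of a maximal compact subgroup of $\L_0$, case by case. Since each $\mathring c_\beta$ is an open convex cone and hence contractible, so is the product; together with the homogeneity and the compact isotropy, this identifies $\prod_\beta \mathring c_\beta$ with the symmetric space $\L_0/K_0$.

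The main obstacle is the case-by-case verification. While the split real and Hermitian-of-tube-type situations reduce to classical facts about symmetric cones, the exceptional $\Theta$-positive groups $F_{4(4)}, E_{6(2)}, E_{7(-5)}, E_{8(-24)}$ require a careful analysis of the Jordan-type structures on the $\up_\beta$, and the ``coupling'' step in the transitivity argument needs the machinery of special roots and $\L_0$-invariant cones developed in~\cite{GW2}. A uniform proof avoiding the case distinction would necessarily rely on that general framework.
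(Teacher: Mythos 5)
First, a point of comparison: the paper does not prove this statement at all — it is quoted verbatim from \cite[Proposition~3.7]{GW2}, so there is no internal argument to measure your proof against. Any self-contained derivation is therefore necessarily a "different route", and the question is whether yours is actually complete. In outline it has the right shape (per-factor transitivity via symmetric-cone theory, a fibration argument over the factors, compact isotropy), and it is broadly how the result is obtained inside the framework of \cite{GW2}; but several of your steps are assertions rather than arguments, and at least one would not give the stated conclusion.

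Concretely: (1) the pivotal claim that the image of $\L_0$ in $\GL(\up_\beta)$ contains the structure group of the cone $\mathring c_\beta$ is exactly the hard content — it is what \cite{GW2} establishes, together with the fact that at most one root of $\Theta$ (the special root $\beta_\Theta$) has $\dim\up_\beta>1$, the remaining cones being rays scaled by the split centre of $\L_0$; citing Koecher--Vinberg does not supply it. (2) The product-transitivity step ("$K_\beta$ still acts transitively on the other factors\dots coupling governed by a shared Cartan subalgebra") is a gesture, not a proof: what is actually needed is that the characters through which the split centre of $\L_0$ scales the various $\up_\beta$ are linearly independent, and that the semisimple part of $\L_0$ acts trivially on the one-dimensional factors; neither is addressed. (3) Compactness of the stabilizer does not follow merely from its preserving a positive-definite form on $\bigoplus_\beta\up_\beta$; you also need the adjoint representation of $\L_0$ on this sum to have compact kernel. (4) Most seriously, maximality of the compact stabilizer cannot be obtained by matching dimensions: $\L_0$ and its maximal compact subgroups are in general disconnected (e.g.\ $K_0\cong\OO(n)$ for $\Sp_{2n}(\R)$, or for $\SO_0(1+p,n+p)$ with $p$ odd), and a compact subgroup of the correct dimension may still miss components; since the component structure of $K_0$ is precisely what drives the count of connected components later in the paper, this step needs an explicit identification of the stabilizer (as in \cite{GW2}, or case by case as in Section~\ref{sec:examples}). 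As it stands, your text is a reasonable programme whose essential inputs are the structural results of \cite{GW2} it would have to quote — which is what the paper does by simply citing the proposition.
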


We choose $v_\beta\in \mathring c_\beta$ for every $\beta\in\Theta$, and take the element
\begin{equation}\label{special_u}
u_\Theta\coloneqq F(v_{\beta_{i_1}},\dots,v_{\beta_{i_l}}).
\end{equation}
The stabilizer of $u_\Theta$ in $\L_0$ is a maximal compact subgroup of $\L_0$. By~\cite[Section~2.5]{GLW}, there is a group $H_\Theta$ that is isogenic to $\PSL_2(\R)$ containing $u_\Theta$. For $H_\Theta$ there exists a split real subgroup $H'_\Theta$ of $G$ with the Weyl group $W(\Theta)$ and such that $H_\Theta$ is the principal $\sl_2$-subgroup of $H'_\Theta$. Let $w_\Theta^0$ be the nontrivial element in the Weyl group of $H_\Theta$. Then $w_\Theta^0$ is the longest element in $W(\Theta)$. We denote by $\omega_\Theta$ a lift of $w_\Theta^0$ in $H_\Theta$.

\begin{prop}\label{properties_special_u}
$(u_\Theta\omega_\Theta)^3=(\omega_\Theta u_\Theta)^3\in\Norm_{\L_0}(u_\Theta)$.
\end{prop}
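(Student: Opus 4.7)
The plan is to carry out the whole computation inside $H_\Theta$, where both $u_\Theta$ and $\omega_\Theta$ live, and to exploit the fact that $H_\Theta$ is isogenous to $\PSL_2(\R)$. Via the principal $\sl_2$-embedding of \cite[Section~2.5]{GLW}, one can identify $H_\Theta$ with an appropriate cover or quotient of $\SL_2(\R)$ so that $u_\Theta$ and $\omega_\Theta$ correspond to the standard generators $T=\begin{pmatrix} 1 & 1 \\ 0 & 1 \end{pmatrix}$ and $S=\begin{pmatrix} 0 & -1 \\ 1 & 0 \end{pmatrix}$ of the modular group. The classical identity $(TS)^3=S^2=(ST)^3$ in $\SL_2(\R)$ then gives
$$(u_\Theta\omega_\Theta)^3 \;=\; \omega_\Theta^2 \;=\; (\omega_\Theta u_\Theta)^3,$$
which proves the first equality of the proposition and identifies the common value with $\omega_\Theta^2$.

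Next I would observe that $\omega_\Theta^2$ is central in $H_\Theta$: it is the image of $-I\in\SL_2(\R)$, and independently it lies in the maximal torus $T_{H_\Theta}$ of $H_\Theta$ because $\omega_\Theta^2$ projects to the identity in the two-element Weyl group of $H_\Theta$. Centrality in $H_\Theta$ immediately gives that $\omega_\Theta^2$ commutes with $u_\Theta$, providing the centralizer part of the claim. For the membership $\omega_\Theta^2\in\L_0$, note that $T_{H_\Theta}$ is connected and embeds into a maximal torus $T$ of $G$, which is itself a connected subgroup of $\L$ containing the identity; hence $T\subseteq\L^\circ$. Since $\L_0\supseteq\L^\circ$ by the remark at the start of Section~\ref{positive_groups}, we obtain $\omega_\Theta^2\in\L_0$.

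Combining the two ingredients, $(u_\Theta\omega_\Theta)^3=(\omega_\Theta u_\Theta)^3=\omega_\Theta^2$ lies in $\L_0$ and centralizes $u_\Theta$, hence belongs to $\Norm_{\L_0}(u_\Theta)$. The one place where care is required is the appeal to $(TS)^3=S^2$: this identity in $\SL_2(\R)$ is valid only for a compatible pairing of the unipotent and the Weyl lift (e.g.\ $t=1$ in $\begin{pmatrix} 1 & t \\ 0 & 1 \end{pmatrix}$ paired with $S$ above), so one must verify that the lift $\omega_\Theta$ produced by the principal $\SL_2$-construction of \cite{GLW} is indeed normalized in the way compatible with $u_\Theta$, rather than some rescaled variant. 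Once this normalization is checked, the rest of the argument is immediate.
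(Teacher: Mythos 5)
Your reduction to a computation inside $H_\Theta$ is fine, and it does deliver part of the statement: whatever normalization of $\omega_\Theta$ one uses, the common value of $(u_\Theta\omega_\Theta)^3=(\omega_\Theta u_\Theta)^3$ is the image under the principal homomorphism of $\pm\Id\in\SL_2(\R)$, hence central in $H_\Theta$; centrality gives both the equality of the two triple products and the fact that the element commutes with $u_\Theta$, and since it is the image of a diagonal matrix it lies in $\Pp\cap\Pm=\L$. This is close in spirit to the paper, which also works through the principal homomorphism; the paper phrases the equality and the membership in $\Norm_\L(u_\Theta)$ via Proposition~\ref{uom_k} with $k=3$ instead of the identity $(TS)^3=S^2$, but that difference (and your normalization caveat) is essentially cosmetic.

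The genuine gap is the step placing the element in $\L_0$. You argue that $\omega_\Theta^2$ sits in a connected maximal torus of $H_\Theta$ which embeds in a maximal torus $T$ of $G$ that is a connected subgroup of $\L$, so that $T\subseteq\L^\circ\subseteq\L_0$. For real groups this fails: the torus of $H_\Theta$ that actually lies in $\L$ is the image of the diagonal of $\SL_2(\R)$, isomorphic to $\R^\times$ (or $\R^\times/\{\pm1\}$), which is not connected, while the connected Cartan of $H_\Theta$ (the rotation subgroup) containing $\omega_\Theta^2$ does not lie in $\L$ at all. Worse, the intermediate conclusion is false in general: for $G=\Sp_{2n}(\R)$ with $n$ odd and the Hermitian positive structure, your value $\omega_\Theta^2=-\Id_{2n}$ corresponds to $-\Id_n$ in $\L\cong\GL_n(\R)$, which has negative determinant and so is not in $\L^\circ$, even though it does lie in $\L_0=\GL_n(\R)$. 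So ``central, hence in a connected torus inside $\L^\circ$'' cannot be the route. The paper closes this step differently: having $(u_\Theta\omega_\Theta)^3\in\Norm_\L(u_\Theta)$, it uses that conjugation by this element fixes $u_\Theta\in\Upp$ and that $\Upp$ is connected (a connected component of the relevant transversality locus, cf.\ the standard diamonds), so conjugation by it must preserve $\Upp$; that is exactly membership in $\L_0=\Stab_\L(\Upp)$. You need an argument of this kind, or some other direct verification that the element stabilizes $\Upp$; the connectedness to invoke is that of $\Upp$, not of a torus or of $\L$.
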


\begin{proof}
Without loss of generality, assume $G$ to be the adjoint form. By~\cite[Section~11.2]{GW2}, there exist $q\colon\SL_2(\R)\to H_\Theta$ a positive principal homomorphism which maps upper triangular matrices to $\Pp$, lower triangular matrices to $\Pm$ and such that $q\begin{pmatrix}1 & 1 \\ 0 & 1\end{pmatrix}=u_\Theta$ and $q\begin{pmatrix}0 & 1 \\ -1 & 0\end{pmatrix}=\omega_\Theta$.

An easy computation shows that $(u_\Theta\omega_\Theta)^3$ is central in $H_\Theta$, in particular it is in $\Pp$. Applying Proposition~\ref{uom_k} with $k=3$, we get $(u_\Theta\omega_\Theta)^3=(\omega_\Theta u_\Theta)^3\in\Norm_{\L}(u_\Theta)$. Since $\Upp$ is connected, the conjugation by $(u_\Theta\omega_\Theta)^3$ fixed $\Upp$, i.e.\ $(u_\Theta\omega_\Theta)^3\in\Norm_{\L_0}(u_\Theta)$.
\end{proof}

Let $\Phi\colon G\to G$ be a group automorphism that preserves $\Up$. Slightly abusing the notation, we will also write $\Phi\colon \Up\to\Up$. Following~\cite{GLW}, we call $\Phi(\Upp)\omega\Pp$ a~\defin{standard diamond}. For $\Phi=\Id$, the diamond $\mathcal D^+\coloneqq\Upp\omega\Pp$ is called the \defin{standard positive diamond}. The diamond $\mathcal D^-\coloneqq (\Upp)^{-1}\omega\Pp$ that is opposite to $\mathcal D^+$ is called the \defin{standard negative diamond}. This is a standard diamond because $(\Upp)^{-1}=\Phi(\Upp)$ where $\Phi$ is the well defined automorphism of $G$ whose derivative $\phi$ at the level of Lie algebras acts in the following way: $\phi(x)=-x$ for all $x\in\up_\beta$ for $\pm\beta\in\Theta$, $\phi(x)=x$ for all $x\in\up_\beta$ for $\pm\beta\in\Delta\bs\Theta$ and all $x$ in the Levi factor.

By~\cite[Proposition 10.1]{GW2}, standard diamonds are connected component of $\Omega^+\cap\Omega^-$ where $\Omega^+\subset \F$ is the spaces of all flags transverse to $\Pp$ and  $\Omega^-\subset \F$ is the spaces of all flags transverse to $\Pm$.

\begin{prop}
The element $\omega$ exchanges the standard negative diamond and the standard positive one, i.e.\ $\omega\mathcal D^-=\mathcal D^+$ and $\omega\mathcal D^+=\mathcal D^-$. In particular, for every $u\in(\Upp)^{-1}$ there exists unique $u'\in\Upp$ such that $\omega u=u'p$ for some $p\in\Pm$.
\end{prop}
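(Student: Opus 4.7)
My plan is to prove the set equality $\omega\mathcal D^-=\mathcal D^+$ by a connectedness argument using the cited fact that standard diamonds are connected components of $\Omega^+\cap\Omega^-$; the opposite equality $\omega\mathcal D^+=\mathcal D^-$ then follows by applying $\omega$ a second time and using that $\omega^2\in\L$ preserves each of $\mathcal D^\pm$, and the ``in particular'' statement is essentially a reformulation of $\omega\mathcal D^-=\mathcal D^+$.

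The topological set-up is immediate. Since conjugation by $\omega$ exchanges $\Pp$ and $\Pm$, the homeomorphism $F\mapsto\omega F$ of $\F$ maps $\Omega^+$ bijectively onto $\Omega^-$, and hence preserves $\Omega^+\cap\Omega^-$. A homeomorphism permutes connected components, and by the cited Proposition~10.1 of~\cite{GW2} the standard diamonds $\mathcal D^\pm$ are connected components of $\Omega^+\cap\Omega^-$, so $\omega\mathcal D^-$ is also such a connected component. To identify it with $\mathcal D^+$ it is therefore enough to exhibit a single flag in the intersection $\omega\mathcal D^-\cap\mathcal D^+$.

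I would construct such a flag using the principal subgroup $H_\Theta$ and Proposition~\ref{properties_special_u}. Consider the flag $F:=u_\Theta^{-1}\omega\Pp\in\mathcal D^-$. Working inside $H_\Theta$, the relation $(u_\Theta\omega_\Theta)^3\in\Norm_{\L_0}(u_\Theta)$ yields, after rearrangement (or equivalently from the $\SL_2(\R)$-level identity transported by the positive principal homomorphism $q\colon\SL_2(\R)\to H_\Theta$), an identity of the form $\omega_\Theta u_\Theta^{-1}\omega_\Theta=u_\Theta\cdot p$ with $p\in\Pm$, so that $\omega_\Theta F=u_\Theta\omega_\Theta\Pp\in\mathcal D^+$. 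Since $\omega$ and $\omega_\Theta$ both lift Weyl elements sending $\Pp$ to $\Pm$, their ratio lifts an element of $W_{\Delta\setminus\Theta}$ and hence lies in $\L$; after absorbing this discrepancy by an element of $\L_0$ (which preserves each $\mathcal D^\pm$ because it normalizes $\Upp$) we also obtain $\omega F\in\mathcal D^+$. This is the main step and the main obstacle: verifying the explicit $H_\Theta$-identity together with the bookkeeping between the two lifts.

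For the ``in particular'' statement, given $u\in(\Upp)^{-1}$, the flag $u\omega\Pp$ lies in $\mathcal D^-$, so its image $\omega u\omega\Pp$ lies in $\omega\mathcal D^-=\mathcal D^+=\Upp\omega\Pp$. Hence there exist $u'\in\Upp$ and $p_0\in\Pp$ with $\omega u\omega=u'\omega p_0$, which rearranges to $\omega u=u'\cdot(\omega p_0\omega^{-1})$ with $\omega p_0\omega^{-1}\in\omega\Pp\omega^{-1}=\Pm$, as required. Uniqueness of $u'$ is automatic: if $u_1p_1=u_2p_2$ with $u_i\in\Upp\subset\Up$ and $p_i\in\Pm$, then $u_2^{-1}u_1=p_2p_1^{-1}\in\Up\cap\Pm=\{e\}$, so $u_1=u_2$.
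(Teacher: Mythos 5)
Your overall strategy mirrors the paper's: use that $\omega$ preserves $\Omega^+\cap\Omega^-$ and hence permutes standard diamonds, then decide the image of one diamond by testing a single flag built from the principal subgroup $H_\Theta$, comparing $\omega$ with $\omega_\Theta$ through $(w^0)^{-1}w^0_\Theta\in W_{\Delta\bs\Theta}$. The topological reduction, the ``in particular'' rearrangement and the uniqueness argument via $\Up\cap\Pm=\{e\}$ are all fine. But the main step has two problems. First, the $H_\Theta$-identity you invoke is false as written: at the $\SL_2$ level, with $u=\left(\begin{smallmatrix}1&1\\0&1\end{smallmatrix}\right)$ and $w=\left(\begin{smallmatrix}0&1\\-1&0\end{smallmatrix}\right)$, one computes $wu^{-1}w=\left(\begin{smallmatrix}-1&0\\-1&-1\end{smallmatrix}\right)$ and $u^{-1}(wu^{-1}w)=\left(\begin{smallmatrix}0&1\\-1&-1\end{smallmatrix}\right)\notin\Pm$, so $\omega_\Theta u_\Theta^{-1}\omega_\Theta\neq u_\Theta p$ with $p\in\Pm$; moreover such an identity would only place the flag in $u_\Theta p\Pp$, which is not $u_\Theta\omega_\Theta\Pp$. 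The identity you actually need (and which is true, being the braid relation $wu^{-1}w=uwu$, equivalently $wu^{-1}=u\left(\begin{smallmatrix}1&0\\-1&1\end{smallmatrix}\right)$) is $\omega_\Theta u_\Theta^{-1}=u_\Theta p$ with $p\in\Um\subset\Pm$, i.e.\ \emph{without} the trailing $\omega_\Theta$; this does give $\omega_\Theta u_\Theta^{-1}\omega_\Theta\Pp=u_\Theta\omega_\Theta\Pp\in\mathcal D^+$. This slip is fixable, but it occurs at the step you yourself identified as the crux.

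The genuine gap is the passage from $\omega_\Theta$ to $\omega$. You only establish that the discrepancy $\delta\coloneqq\omega\omega_\Theta^{-1}$ lies in $\L$, and then ``absorb it by an element of $\L_0$''. But only $\L_0$ stabilizes $\Upp$ and hence $\mathcal D^\pm$; a general element of $\L$ permutes the standard diamonds nontrivially (already for split $\PSL_n(\R)$, a diagonal sign matrix in $\L\setminus\L_0$ conjugates $\Upp$ to a different sign pattern, and since $u\mapsto u\omega\Pp$ is injective on $\Up$ this moves $\mathcal D^+$ off itself). So the assertion that $\delta$ can be absorbed into $\L_0$ is precisely the compatibility between the fixed lift $\omega$ and the lift $\omega_\Theta$ that has to be proved; it does not follow from $\delta\in\L$. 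The paper supplies this missing input differently: it asserts, with a reference to~\cite{GW2}, that $\omega$ and $\omega_\Theta$ act in the same way on the positive circle $C_q$, and then evaluates at the explicit point $q'([1:1])\in C_q\cap\mathcal D^+$. Your argument needs an analogous statement about $\delta$ (that it fixes the relevant test flag, or lies in $\L_0$), not merely $\delta\in\L$. The same issue infects your reduction of $\omega\mathcal D^+=\mathcal D^-$ to the other equality: $\omega^2\in\L$ alone does not preserve $\mathcal D^\pm$; you would need $\omega^2\in\L_0$ (true for $\omega_\Theta^2$, which is central in $H_\Theta$, but again a statement about the chosen lift $\omega$), or you should simply run the one-flag test in the other direction as the paper implicitly does.
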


\begin{proof} First notice that $\omega$ permutes $\Pp$ and $\Pm$, so it acts by permutation on standard diamonds.

Further, notice that this statement holds for $\PSL(2,\R)$. Because the center of $G$ acts trivially on $\F$, without loss of generality, we assume that $G$ is the adjoint form. We take a positive embedding $q\colon\SL(2,\R) \to H_\Theta\leq G$. Under the map $q$, the upper triangular matrices are mapped to $\Pp$ and unipotent matrices with positive entry in the position $(1,2)$ are mapped to $\Upp$. The map $q$ induces the injective homomorphism at the level of Weyl groups that maps a generator of the Weyl group of $\SL(2,\R)$ to the longest element $w_\Theta^0$ of the group $W(\Theta)$ of $G$. In particular, $\omega_\Theta\coloneqq q\begin{pmatrix}0 & 1 \\ -1 & 0\end{pmatrix}$ is a lift of the longest element $w^0_\Theta\in W(\Theta)$.

Let $C$ be the flag variety for $\SL(2,\R)$ which is the real projective line. We denote by $C_q\subseteq\F$ the positive circle corresponding to the embedding $q$, i.e.\ the orbit of $\Pp$ in $\F$ under the action of $q(\PSL(2,\R))$. The map $q$ induces the $q$-equivariant embedding $q'\colon C\to C_q\subset \F$. Notice that $q'([1:0])=\Pp$, $q'([0:1])=\omega\Pp$, $q'([a:1])\in\mathcal D^+$ for all $a>0$ and $q'([-a:1])\in\mathcal D^-$ for all $a>0$.

Since $(w^0)^{-1}w^0_\Theta$ is an element of the subgroup group $W_{\Delta\bs\Theta}\subseteq W$ (see~\cite[Section~4]{GW2}), the elements $\omega_\Theta$ and $\omega$ act in the same way on $C_q$. Finally we obtain:
$$\omega(q'([1:1]))=\omega_\Theta(q'([1:1]))=q'\left(\begin{pmatrix}0 & 1 \\ -1 & 0\end{pmatrix}[1:1]\right)=q'([-1:1])\in \mathcal D^-.$$
Since $\mathcal D^+$ and $\mathcal D^-$ are connected components of $\Omega^+\cap\Omega^-$, for every $F\in\mathcal D^+$, $\omega F\in \mathcal D^-$.
\end{proof}

\begin{df}
A $k$-tuple $(F_1,\dots,F_k)$ ($k\geq 3$) of flags in $\F$ is called \defin{positive} if there exists a $g\in G$ such that $g(F_1,\dots,F_k)=(\Pp, \omega\Pp, u_1\omega\Pp, u_1u_2\omega\Pp,\dots, u_1,\dots u_{k-2}\omega\Pp)$ where $u_1,\dots,u_{k-2}\in\Upp$.
\end{df}

\begin{fact}[Section~2.4 of \cite{GLW}]\label{properties_positive_tuple}
The element $g$ in the definition above is not unique, but all of them are related by the conjugation be an element of $\L_0$. Moreover, if $(F_1,\dots,F_k)$ is positive and there exists a $g$ such that $g(F_1,F_2,F_3)=(\Pp, \omega\Pp, u_1\omega\Pp)$ for $u_1\in\Upp$, then there exist $u_2,\dots,u_{k-2}\in\Upp$ such that $g(F_1,\dots,F_k)=(\Pp, \omega\Pp, u_1\omega\Pp, u_1u_2\omega\Pp,\dots, u_1,\dots u_{k-2}\omega\Pp)$.

A $k$-tuple is positive if and only if
\begin{itemize}
\item every of its cyclically ordered quadruple is positive.
\item every of its cyclic permutation is positive.
\end{itemize}
\end{fact}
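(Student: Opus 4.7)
The plan is to prove the three assertions in sequence, relying on the characterization of $\mathcal D^+$ as a connected component of $\Omega^+\cap\Omega^-$ and on the fact that $\L_0$ is by definition the stabilizer of $\Upp$ in $\L$ under conjugation.

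For the uniqueness of $g$ up to $\L_0$, suppose $g_1,g_2\in G$ both realize the standard form $(\Pp,\omega\Pp,u_1\omega\Pp,\ldots)$ for $(F_1,\ldots,F_k)$, possibly with different tuples $(u_1,\ldots,u_{k-2})$ and $(u_1',\ldots,u_{k-2}')$. Then $\ell\coloneqq g_2g_1^{-1}$ stabilizes the pair $(\Pp,\omega\Pp)$ and hence lies in $\L$. Since $\ell$ must send $u_1\omega\Pp\in\mathcal D^+$ to $u_1'\omega\Pp\in\mathcal D^+$, and the action of $\L$ on $\Up\omega\Pp$ is by conjugation on the $\Up$-factor, we obtain $\ell u_1\ell^{-1}=u_1'\in\Upp$. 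By the definition of $\L_0$, this forces $\ell\in\L_0$, and the two tuples are then simultaneously $\L_0$-conjugate.

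For the extension from $(F_1,F_2,F_3)$ to the full tuple, pick any $g'$ realizing the standard positive form. Applying the uniqueness just established to the sub-triple $(F_1,F_2,F_3)$, the given $g$ is of the form $\ell g'$ with $\ell\in\L_0$. Since $\L_0$ stabilizes $\Upp$ under conjugation, the images $g(F_j)=\ell g'(F_j)$ take the form $\ell u_1\cdots u_{j-2}\omega\Pp=(\ell u_1\ell^{-1})\cdots(\ell u_{j-2}\ell^{-1})\omega\Pp$, producing the required $u_i$'s in $\Upp$.

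The substantive part is the characterization via cyclically ordered quadruples and cyclic permutations. One direction (positivity implies positivity of every quadruple and every cyclic permutation) reduces, after normalization, to a statement about the standard form. Positivity of quadruples follows from writing, for any indices $i<j<k<\ell$, $(F_i,F_j,F_k,F_\ell)$ as $g^{-1}(\Pp,v_1\omega\Pp,v_1v_2\omega\Pp,v_1v_2v_3\omega\Pp)$ where each $v_m$ is a nonempty subproduct of consecutive $u_r$'s and hence lies in $\Upp$ by the semigroup property. Cyclic invariance is the main obstacle: one must show that if $(F_1,\ldots,F_k)$ is positive then so is $(F_2,\ldots,F_k,F_1)$. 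The key computation, which one reduces to $k=4$ by the quadruple criterion, uses that $\omega$ swaps $\mathcal D^+$ and $\mathcal D^-$ together with Proposition~\ref{properties_special_u}: shifting the cyclic order amounts to multiplying on the left by a suitable power of $\omega u_1$, and the explicit $\PSL_2$-model provided by the positive principal homomorphism $q$ shows that the shifted expression can be rewritten in the standard form with new parameters again in $\Upp$.

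For the converse, one proceeds by induction on $k$. Positivity of $(F_1,F_2,F_3,F_4)$ allows one to normalize so that $F_1=\Pp$, $F_2=\omega\Pp$, $F_3=u_1\omega\Pp$, $F_4=u_1u_2\omega\Pp$ with $u_1,u_2\in\Upp$. For each $j\geq 5$, applying positivity to the quadruple $(F_1,F_3,F_{j-1},F_j)$ — itself justified by combining quadruple positivity with one cyclic shift — and comparing with the already-normalized flags, one extracts $u_{j-2}\in\Upp$ with $F_j=u_1\cdots u_{j-2}\omega\Pp$, completing the induction.
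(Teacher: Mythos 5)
The paper itself does not prove this Fact --- it is quoted from Section~2.4 of \cite{GLW} --- so there is no internal argument to compare against; judging your proposal on its own terms, the first two assertions are essentially right but need one repair, while the third contains a genuine gap. For the uniqueness claim, the step ``$\ell u_1\ell^{-1}\in\Upp$, so by the definition of $\L_0$ we get $\ell\in\L_0$'' is a non sequitur as written: $\L_0$ is the stabilizer of the whole semigroup $\Upp$, not of one element. The missing (fillable) step is that $\ell\in\L$ preserves $\Omega^+\cap\Omega^-$ and therefore permutes its connected components, so sending a single point of $\mathcal D^+=\Upp\omega\Pp$ into $\mathcal D^+$ already forces $\ell(\mathcal D^+)=\mathcal D^+$, and then $\ell\Upp\ell^{-1}=\Upp$ follows from the uniqueness of $u$ in the expression $u\omega\Pp$. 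With that inserted, your extension argument (part two) goes through.

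The substantive gap is in the quadruple/cyclic characterization. Your claim that every cyclically ordered sub-quadruple is positive ``by the semigroup property,'' because it can be written as $g^{-1}(\Pp,v_1\omega\Pp,v_1v_2\omega\Pp,v_1v_2v_3\omega\Pp)$ with each $v_m\in\Upp$, does not work: that tuple is \emph{not} in the standard positive form (the second entry must be $\omega\Pp$), and it only even has this shape when the quadruple starts at $F_1$. Positivity is immediate only for sub-quadruples whose first two entries are $F_1,F_2$; showing that, say, $(F_2,F_3,F_4,F_5)$ or $(F_1,F_3,F_4,F_5)$ is positive is precisely the nontrivial content, and it needs Lusztig-type structural input (control of expressions such as $\omega u$ and $u^{-1}$ for \emph{arbitrary} $u\in\Upp$). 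Your appeal to Proposition~\ref{properties_special_u} and to the principal $\PSL_2$ through $q$ only governs the special element $u_\Theta$ and its one-parameter family, so it cannot rewrite a cyclically shifted tuple with arbitrary parameters in standard form; and the proposed ``reduction to $k=4$ by the quadruple criterion'' presupposes exactly the sub-quadruple positivity (for quadruples in the cyclic order, not only the linear order) that is left unproven, so the forward direction is not established. By contrast, your converse induction is salvageable and is best run with the quadruples $(F_1,F_2,F_{j-1},F_j)$, whose normalization matches the one already fixed via the uniqueness statement, rather than $(F_1,F_3,F_{j-1},F_j)$, whose normalization does not obviously compare with the existing one.
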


\begin{df}
We denote by $\Conf_k^+(\F)$ the space of positive $k$-tuples of flags in $\F$ up to the action of $G$.
\end{df}

The following Propositions are immediate:

\begin{prop}
The space $\Conf_k^+(\F)$ is homeomorphic to $(\Upp)^{k-2}/\L_0$.
\end{prop}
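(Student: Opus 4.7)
The plan is to construct a natural continuous bijection $\Conf_k^+(\F) \to (\Upp)^{k-2}/\L_0$ and check it is a homeomorphism, using the definition of a positive tuple together with Fact~\ref{properties_positive_tuple} to handle the ambiguity.

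First I would define the forward map. Given a positive tuple $(F_1,\dots,F_k)$, the definition provides $g\in G$ with $g(F_1,\dots,F_k)=(\Pp,\omega\Pp,u_1\omega\Pp,u_1 u_2\omega\Pp,\dots,u_1\cdots u_{k-2}\omega\Pp)$ for some $u_1,\dots,u_{k-2}\in\Upp$; I send the $G$-orbit of $(F_1,\dots,F_k)$ to the $\L_0$-orbit of $(u_1,\dots,u_{k-2})$. For well-definedness, if $g'$ is another such element then by Fact~\ref{properties_positive_tuple} one has $g'=\ell g$ for some $\ell\in\L_0$. Since $\ell\in\L$ stabilizes both $\Pp$ and $\omega\Pp$ (as $\omega$ normalizes $\L$, one has $\ell\omega\Pp = \omega(\omega^{-1}\ell\omega)\Pp=\omega\Pp$), the $i$-th entry transforms as $\ell u_1\cdots u_i\omega\Pp=(\ell u_1\ell^{-1})\cdots(\ell u_i\ell^{-1})\omega\Pp$, so the tuple $(u_1,\dots,u_{k-2})$ is replaced by its simultaneous $\ell$-conjugate, which is the same point of $(\Upp)^{k-2}/\L_0$.

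Next I would define the inverse map: to an $\L_0$-orbit of $(u_1,\dots,u_{k-2})\in(\Upp)^{k-2}$ associate the $G$-orbit of
$$(\Pp,\omega\Pp,u_1\omega\Pp,u_1 u_2\omega\Pp,\dots,u_1\cdots u_{k-2}\omega\Pp).$$
This is positive by definition, and the computation above shows that replacing $(u_1,\dots,u_{k-2})$ by a simultaneous $\ell$-conjugate ($\ell\in\L_0\subset\L$) gives a $G$-translate of the same tuple, so the map is well defined. The two maps are inverses of each other essentially by construction.

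Finally I would verify that both maps are continuous. The backward map is clearly continuous as the composition of $(u_i)\mapsto(\Pp,\omega\Pp,u_1\omega\Pp,\dots)$ with the quotient map to $\Conf_k^+(\F)$. For the forward map, one can work locally by choosing a continuous local section of the principal $\L$-bundle $G\to G/\L$ over a neighborhood of $(\Pp,\omega\Pp)$ in the space of transverse pairs; applying this section to $(F_1,F_2)$ produces a continuous $g$, and then the $u_i$'s are extracted continuously from $gF_{i+2}=u_1\cdots u_i\omega\Pp$ using that $\Up\to\Up\omega\Pp\subset\F$ is a homeomorphism onto an open set. The only mild point is to check that $\L_0$ (rather than the full $\L$) is exactly the ambiguity, which is precisely the content of Fact~\ref{properties_positive_tuple} and is the step where positivity is really used; everything else is a direct translation of the definition.
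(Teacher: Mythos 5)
Your argument is correct and is essentially the paper's own reasoning: the paper states this proposition without proof as an immediate consequence of the definition of positive tuples together with Fact~\ref{properties_positive_tuple}, which is exactly how you use it (the uniqueness of the $u_i$ for a fixed $g$ coming from the earlier transversality proposition). The one refinement worth making concerns continuity of the forward map: a local section of $G\to G/\L$ over the transverse pairs produces a $g$ whose associated $u_i$ a priori only lie in $\Up$, not in $\Upp$, so one should translate the section so that at the chosen basepoint it returns a positivity-realizing $g$ and then use that $\Upp$ is open in $\Up$ to conclude that for nearby positive tuples the extracted $u_i$ remain in $\Upp$, after which your composition with the quotient map $(\Upp)^{k-2}\to(\Upp)^{k-2}/\L_0$ goes through.
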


\begin{prop}
Let $(F_1,F_4,F_2,F_3)$ be a positive quadruple of flags. Then there exists $g\in G$ such that
$$g(F_1,F_4,F_2,F_3)=(\Pp,\omega u' \omega\Pp,\omega\Pp, u\omega\Pp)=(\Pp,(u'')^{-1}\omega\Pp,\omega\Pp, u\omega\Pp)$$
where $u,u',u''\in\Upp$. The element $g$ is unique up to a left multiplication by an element of $\L_0$, i.e.\ as an element of $\L_0\setminus G=\{\L_0 g\mid g\in G\}$.
\end{prop}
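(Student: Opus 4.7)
My plan follows the structure of Proposition~\ref{prop:quadr}, with the positive structure replacing the general transverse one: existence by normalization through the standard positive form of Fact~\ref{properties_positive_tuple}, and uniqueness by reducing back to that form.

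For existence, since $(F_1,F_4,F_2,F_3)$ is positive, Fact~\ref{properties_positive_tuple} yields $g_0\in G$ with
$$g_0(F_1,F_4,F_2,F_3)=(\Pp, \omega\Pp, v\omega\Pp, vw\omega\Pp)$$
for some $v,w\in\Upp$. Multiplying on the left by $v^{-1}\in(\Upp)^{-1}\subset\Pp$, which stabilizes $\Pp$, one gets
$$v^{-1}g_0(F_1,F_4,F_2,F_3)=(\Pp, v^{-1}\omega\Pp, \omega\Pp, w\omega\Pp).$$
Setting $u=w\in\Upp$ and $u''=v\in\Upp$, the first, third, and fourth coordinates match the required form, and the second coordinate equals $(u'')^{-1}\omega\Pp$. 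To also realize it as $\omega u'\omega\Pp$ with $u'\in\Upp$, I use that $v^{-1}\omega\Pp\in(\Upp)^{-1}\omega\Pp=\mathcal D^-$ together with the preceding proposition $\omega\mathcal D^-=\mathcal D^+=\Upp\omega\Pp$: the composition $u'\mapsto u'\omega\Pp\mapsto\omega u'\omega\Pp$ is a bijection $\Upp\to\mathcal D^-$, producing a unique $u'\in\Upp$ with $\omega u'\omega\Pp=v^{-1}\omega\Pp$. Taking $g=v^{-1}g_0$ completes existence.

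For uniqueness, suppose $g_1,g_2$ both satisfy the equation, with corresponding parameters $(u_i',u_i,u_i'')$. Reversing the construction, $u_i''g_i\in\Pp\cdot g_i$ puts the quadruple back in the standard positive form
$$u_i''g_i(F_1,F_4,F_2,F_3)=(\Pp,\omega\Pp, u_i''\omega\Pp, u_i''u_i\omega\Pp).$$
By the uniqueness clause in Fact~\ref{properties_positive_tuple}, there exists $\ell\in\L_0$ with $u_2''g_2=\ell u_1''g_1$; comparing third coordinates and using that $u\mapsto u\omega\Pp$ is a bijection $\Up\to\Up\omega\Pp$ yields $u_2''=\ell u_1''\ell^{-1}$. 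Substituting, $g_2=(u_2'')^{-1}\ell u_1''g_1=(u_2'')^{-1}u_2''\ell\, g_1=\ell g_1$, so $g_1$ and $g_2$ differ by left multiplication by an element of $\L_0$.

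The only delicate point is ensuring $u'\in\Upp$ rather than merely in $\Up_*$; this is precisely what the proposition $\omega\mathcal D^-=\mathcal D^+$ guarantees, via the bijection between $\Upp$ and the diamond $\mathcal D^-$. Everything else is direct bookkeeping parallel to Proposition~\ref{prop:quadr}.
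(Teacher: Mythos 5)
Your argument is correct, and it is essentially the fleshed-out version of what the paper treats as immediate (the paper states this proposition without proof): existence comes from the definition of a positive quadruple together with the diamond-exchange proposition $\omega\mathcal D^+=\mathcal D^-$ to produce $u'\in\Upp$, and uniqueness comes from the $\L_0$-ambiguity recorded in Fact~\ref{properties_positive_tuple}, exactly parallel to Proposition~\ref{prop:quadr} in the transverse case. No gaps; the bookkeeping $u''_2=\ell u''_1\ell^{-1}$ and $g_2=\ell g_1$ is the intended reduction.
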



\section{Topological data}\label{sec:top_data}

\subsection{Punctured surfaces}
Let $\bar S$ be a compact oriented smooth surface of finite type with or without boundary. Let $P$ be a nonempty finite subset of $\bar S$ such that on every boundary component there is at least one element of $P$. We define $S\coloneqq\bar S\bs P$. We assume that the Euler characteristic $\chi(S)$ of $S$ is negative or that $\bar S$ is diffeomorphic to a closed disc and there are at least three punctures. Surfaces that can be obtained in this way are called \defin{punctured surfaces} (some authors call them also ciliated surfaces, e.g.~\cite{palesi}). Elements of $P$ are called \defin{punctures} of $S$. Sometimes we will distinguish between elements of $P$ that lie in the interior of $\bar S$ -- \defin{internal punctures} and that lie on the boundary -- \defin{external punctures}.

Every punctured surface can be equipped with a complete hyperbolic structure of finite volume with geodesic boundary. For every such hyperbolic structure all the punctures are cusps and all boundary curves are (infinite) geodesics. Once equipped with a hyperbolic structure as above, the universal covering $\tilde S$ of $S$ can be seen as closed subset of the hyperbolic plane $\HH^2$ which is invariant under the natural action of $\pi_1(S)$ on $\HH^2$ by the holonomy representation. Punctures $P$ of $S$ are lifted to points of the ideal boundary of $\HH^2$ which we call punctures of $\tilde S$ and denote by $\tilde P\subset \partial_\infty \tilde S\subseteq \partial_\infty\HH^2$. Notice that if $\bar S$ has no boundary, then $\tilde S$ is the entire $\HH^2$.

\subsection{Ideal triangulations}
An \defin{ideal triangulation} of $S$ is a triangulation of $\bar S$ whose set of vertices agrees with $P$. We always consider edges of an ideal triangulation as homotopy classes of (non-oriented) paths connecting points in $P$ (relative to its endpoints). Connected components of the complement of all edges of an ideal triangulation $\mathcal T$ in $S$ are called faces or triangles of $\mathcal T$. Every edge belongs to the boundary of one or two triangles. In the first case, an edge is called \defin{external}, in the second -- \defin{internal}. We denote by $E$, resp. $E_{in}$, resp. $E_{ex}$ the set of all edges, resp. all internal edges, resp. all external edges of $\T$. We denote by $T$ the set of faces of $\T$ and fix a total order $<$ on $T$. Any ideal triangulation of $S$ can be realized as an ideal geodesic triangulation as soon as a hyperbolic structure as above on $S$ is chosen. We fix a hyperbolic structure of finite volume on $S$ and assume that edges of $\T$ are geodesics.

The following lemma counts the number of triangles of an ideal triangulation of $S$:

\begin{lem}
Let $S$ be a surface of genus $g$ with $p_i$ internal punctures and $p_e$ external punctures. Let $m$ be the number of boundary components of $\bar S$, then
$$\#T=4g-4+2p_i+2m+p_e=p_e-2\chi(S).$$
where $\chi(S)=2-2g-p_i-m$ is the Euler characteristic of $S$.
\end{lem}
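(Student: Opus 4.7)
The proof is a direct application of the Euler formula for the compact surface $\bar S$, combined with a standard double counting of incidences between triangles and edges. I would proceed as follows.

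First, I would record the combinatorial input. The ideal triangulation is a cell decomposition of $\bar S$, so Euler's formula gives
\[
|P| - \#E + \#T = \chi(\bar S) = 2 - 2g - m,
\]
and by assumption $|P| = p_i + p_e$. Next I would split the edge count according to the distinction already introduced in the excerpt: $\#E = \#E_{in} + \#E_{ex}$. The key observation is that the edges on the boundary of $\bar S$ are exactly the external edges, and on each boundary circle they form a cyclic chain whose number equals the number of external punctures lying on that component; summing over the $m$ boundary components yields $\#E_{ex} = p_e$.

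Second, I would double-count triangle–edge incidences: every triangle has three edges, every internal edge is shared by two triangles, and every external edge belongs to a single triangle. This gives
\[
3\,\#T \;=\; 2\,\#E_{in} + \#E_{ex} \;=\; 2\,\#E_{in} + p_e,
\]
hence $\#E = \#E_{in} + p_e = \tfrac{1}{2}(3\,\#T + p_e)$.

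Substituting into Euler's formula and solving a one-line linear equation yields
\[
\#T = 4g - 4 + 2p_i + 2m + p_e,
\]
and rewriting the right-hand side via the identity $-2\chi(S) = -2(2 - 2g - p_i - m) = 4g - 4 + 2p_i + 2m$ immediately gives the second equality $\#T = p_e - 2\chi(S)$. No step here is an obstacle; the only mild care required is the observation that external edges are exactly the boundary edges, so that $\#E_{ex} = p_e$, which relies on the hypothesis that every boundary component of $\bar S$ contains at least one puncture (otherwise the boundary circle could not be a union of edges of the ideal triangulation).
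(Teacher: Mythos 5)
Your proof is correct, and since the paper states this lemma without giving a proof, your Euler-characteristic-plus-double-counting argument is exactly the standard computation the authors presumably had in mind. The one point of care you flag — that $\#E_{ex}=p_e$ because every boundary component of $\bar S$ carries at least one puncture — is indeed the only place the hypotheses on $P$ enter, and you handle it correctly.
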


\subsection{Graph \texorpdfstring{$\Gamma$}{Gamma}}\label{sec:Graph_Gamma}
\begin{figure}[ht]
\begin{tikzpicture}[every node/.style={fill, circle, inner sep = 1.5pt}]
\draw (-5,0) -- (1,4.5) -- (7,0.5) -- (1,-3.5) -- (-5,0);
\draw (1,4.5) -- (1,-3.5);

\node[white, label=$\tau$] at (2,3) {};
\node[white, label=$\tau'$] at (0.2,3) {};

\node[label=above left:$v_\tau$](vt) at (1.8,0.5) {};
\node[label=above right:$v_{\tau'}$](vt') at (-2+0.3,2-0.3) {};
\node[label=above right:$v_2$](v2) at  (3.5-0.2,-1.5+0.2) {};
\node[label=below:$v_3$](v3) at (0.2,0.5) {};
\node[label=right:$v_1$](v4) at (-2,-1) {};

\draw[middlearrow={latex}] (vt) to (v3) ;


\draw[middlearrow={latex}] (vt) to (v2) ;

\draw[middlearrow={latex}] (vt') to (v3) ;

\draw[middlearrow={latex}] (vt') to (v4) ;

\node (v5) at (-2.5,2.5) {};
\node (v7) at (4,-2) {};
\node (v8) at (-2.2,-2.2) {};

\draw[middlearrow={latex}] (vt') to (v5) ;


\draw[middlearrow={latex}] (v2) to (v7) ;

\draw[middlearrow={latex}] (v8) to (v4) ;

\end{tikzpicture}
\caption{Example of a graph $\Gamma$. Here the upper right edge of the triangulation is external, all other edges are internal, $v_\tau,v_{\tau'}\in V^0_\Gamma$, $v_1,v_2,v_3\in V_\Gamma\setminus V^0_\Gamma$, $\tau<\tau'$.}
\end{figure}
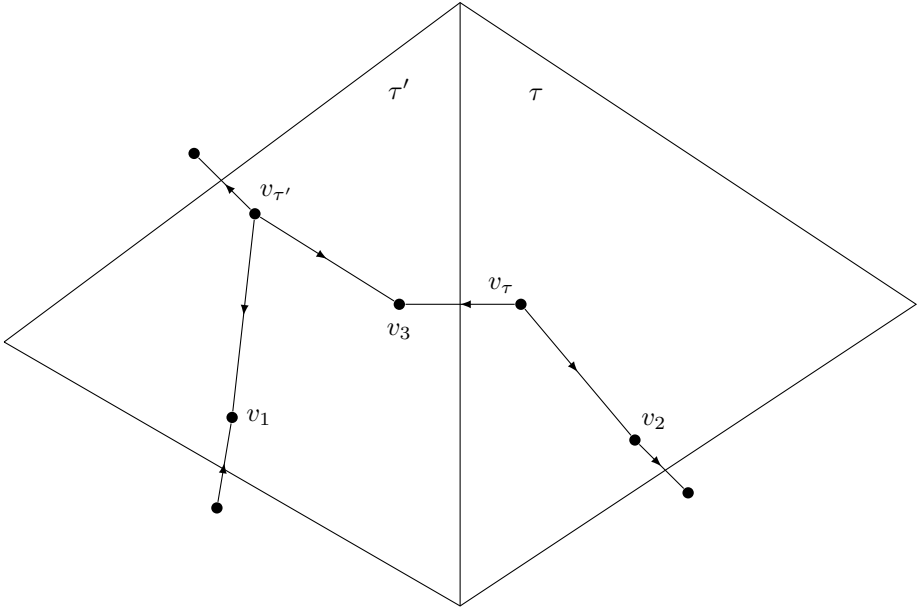

We construct the following graph $\Gamma$ on $S$. The set of vertices of $\Gamma$ is $V_\Gamma\coloneqq\{(\tau,e)\in T\times E_{in}\mid e\subset\bar \tau\}$. A vertex $(\tau,e)\in V_\Gamma$ can be seen as a point in the triangle $\tau\in T$ lying close to the internal edge $e\in E_{in}$.  In every triangle $\tau\in T$ we choose one vertex $v_\tau\in V_\Gamma$ in the triangle $\tau$. We denote: $V^0_\Gamma\coloneqq\{v_\tau\mid \tau\in T\}$.

We now describe the set of oriented edges $E^+_\Gamma$ of $\Gamma$. First we fix the following notation: the oriented edge from $v\in V_\Gamma$ to $v'\in V_\Gamma$ is denoted as $(v\to v')$ and as $(v'\gets v)$.
\begin{itemize}
\item Let vertices $v$ and $v'$ lie in the same triangle and $v\in V^0_\Gamma$ or $v'\in V_\Gamma$, then $(v\to v')$ is an edge of $\Gamma$.
\item Let $v=(\tau,e)$, $v'=(\tau',e)$ and $\tau<\tau'$, then $(v \to v')$ is an edge of $\Gamma$. This edge crosses the internal edge $e$ of the triangulation $\mathcal T$.
\end{itemize}

\begin{figure}[ht]
\begin{tikzpicture}

\draw (-3,3) node[label = $v^t$] (v1) {} -- (-3,-2) node[label = below:$v^b$] (v2) {} -- (0.7,0.5) node[label = right:$v^r$] (v3) {} -- (-3,3);
\draw [dotted](-3,3) -- (-2.2,0.5);
\draw [dotted](-3,-2) -- (-2.2,0.5);
\node[fill, circle, inner sep = 1.5pt, label= right:$v$] at (-2.2,0.5) {};
\node[label= $e$] at (-3.2,0.2) {};
\node[label= $\tau$] at (-1.2,0.8) {};
\end{tikzpicture}
\caption{Definition of $v^t$, $v^b$ and $v^r$ for $v\in V_\Gamma$}\label{vt-vb}
\end{figure}
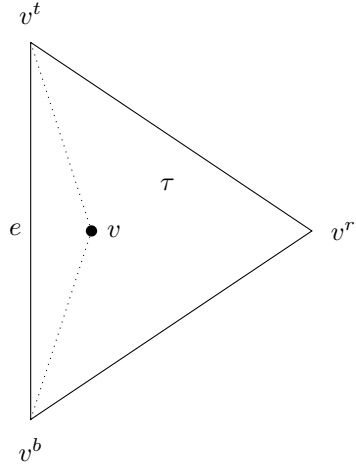

Let $v=(\tau,e)\in V_\Gamma$ and the edge $r$ connects two punctures $p,p'\in P$. If we connect $v$ with $p$ and $p'$ by two simple non intersecting segments in $\tau$, we obtain a triangle with vertices $v,p,p'$ in $\tau$. We denote $v^t\coloneqq p$, $v^b\coloneqq p'$ if the orientation of the triangle $(v,p,p')$ agrees with the orientation of the surface (see Figure~\ref{vt-vb}). Further we define $v^r$ to be the unique puncture of $P$ such that $(v^t,v^b,v^r)=\tau$.

We also denote $E^-_\Gamma\coloneqq \{(v'\gets v)\mid (v\gets v')\in E^+_\Gamma\}$. Since $\Gamma$ is a tree, $E^+_\Gamma\cap E^-_\Gamma=\emptyset$. We denote $E_\Gamma\coloneqq E^+_\Gamma\cup E^-_\Gamma$.

We denote by $\tilde\Gamma$ the lift of $\Gamma$ to the universal covering $\tilde S\subset \HH^2$ of $S$. We also choose a point $b\in V_\Gamma$ and fix one of its lifts $\tilde b\in V_{\tilde\Gamma}$. Assume $b\in \tau_0$ and $\tilde b\in \tilde \tau_0$ for $\tau_0\in T$ and $\tilde \tau_0\in \tilde T$.

\begin{rem}
The graph $\tilde\Gamma$ is a tree. The graph $\Gamma$ is homotopy equivalent to $S$.
\end{rem}

\section{Framed representations and framed local systems}\label{sec:framed_reps}
\subsection{Transverse framed representations}

\begin{df}
The space of all homomorphisms $\rho\colon\pi_1(S)\to G$ is denoted by $\Hom(\pi_1(S),G)$. The group $G$ acts on $\Hom(\pi_1(S),G)$ by conjugation. The quotient space is denoted by $\Rep(\pi_1(S),G)\coloneqq\Hom(\pi_1(S),G)/G$.
\end{df}

\begin{df}
A \defin{framing} is a map $F\colon \tilde P\to \F$ where $\tilde P$ is the set of lifted punctures of $S$ to $\tilde S$ as before. Let $\rho\colon\pi_1(S)\to G$ be a homomorphism. A \defin{framing} of $\rho$ is a $\pi_1(S)$-equivariant map $F\colon \tilde P\to \F$, i.e.\ for every $\gamma\in\pi_1(S)$, $F(\gamma(\tilde p))=\rho(\gamma)F(\tilde p)$ for all $\tilde p\in \tilde P$. A \defin{framed homomorphism} is a pair $(\rho,F)$ where $F$ is a framing of $\rho$.

The space of all framed homomorphisms is denoted by $\Hom^{\fr}(\pi_1(S),G)$. The group $G$ acts by conjugation on the space of homomorphisms. It also acts on framings as follows: for a framing $F$ and $g\in G$ the map $g(F)\colon\tilde P\to \F$ defined by $g(F)(p)=g(F(p))$ is again a framing. Moreover, if $(\rho,F)$ is a framed homomorphism, then $(g\rho g^{-1},g(F))$ is again a framed homomorphism. So we can consider the following quotient space:
$$\Rep^{\fr}(\pi_1(S),G)\coloneqq \Hom^{\fr}(\pi_1(S),G)/G$$
which is called the \defin{space of framed representations}. A \defin{framed representation} is an element of $\Rep^{\fr}(\pi_1(S),G)$.
\end{df}

Let $\T$ be an ideal triangulation of $S$ and $\tilde\T$ is the lift of $\T$ to the universal covering $\tilde S$.

\begin{df}\label{df:transverse_rep}
A framed homomorphism $(\rho,F)$ is called \defin{$\T$-transverse} if for every two punctures $\tilde p_1,\tilde p_2\in\tilde P$ that are connected by an edge of $\tilde\T$, the flags $F(\tilde p_1)$ and $F(\tilde p_2)$ are transverse.

The space of all $\T$-transverse framed homomorphisms is denoted by $\Hom^{\fr}_\T(\pi_1(S),G)$. The space
$$\Rep^{\fr}_\T(\pi_1(S),G)\coloneqq \Hom^{\fr}_\T(\pi_1(S),G)/G$$
is called the \defin{space of $\T$-transverse framed representations}.
\end{df}

\subsection{Framed local systems on trees}
Let $\Gamma$ be an oriented tree. 
We denote by $V_\Gamma$ the set of vertices of $\Gamma$ and by $E^+_\Gamma$ the set of oriented edges. If $e\in E^+_\Gamma$ is an edge going from the vertex $v\in V_\Gamma$ to the vertex $v'\in V_\Gamma$, we write $e=(v\to v')=(v'\gets v)$. We also denote $E^-_\Gamma\coloneqq \{(v'\gets v)\mid (v\gets v')\in E^+_\Gamma\}$. Since $\Gamma$ is a tree, $E^+_\Gamma\cap E^-_\Gamma=\emptyset$. We denote $E_\Gamma\coloneqq E^+_\Gamma\cup E^-_\Gamma$.

\begin{df}
A \defin{transverse framing} of $\Gamma$ is a pair of maps $F^t,F^b\colon V_\Gamma\to \F$ such that $F^t(v)$ and $F^b(v)$ are transverse for every $v\in V_\Gamma$.
\end{df}

\begin{df} A \defin{$G$-local system on a tree $\Gamma$} is a map $T\colon V_\Gamma\cup E_\Gamma\to G$ such that for any edge $(v'\gets v)\in E_\Gamma$, $v,v'\in V_\Gamma$, $T(v'\gets v)=T(v\gets v')^{-1}$ and  $T(v')=T(v'\gets v)T(v)$. A $G$-local system on $\Gamma$ defined by the map $T$ is denoted by $(\Gamma,T)$. 

Let $(F^t,F^b)$ be a transverse framing, let $(\Gamma,T)$ be a $G$-local system on $\Gamma$. We say that the transverse framing $(F^t,F^b)$ is \defin{adapted} to the local system $(\Gamma,T)$ if for every $v\in V_\Gamma$ holds: $T(v)F^t(v)=\Pp$, $T(v)F^b(v)=\omega\Pp$. A quadruple $(\Gamma,T,F^t,F^b)$ is called a \defin{transverse framed $G$-local system} if $(F^t,F^b)$ is a transverse framing adapted to $(\Gamma,T)$. 
\end{df}

\begin{rem}
Let $v\in V_\Gamma$ be a fixed vertex of $\Gamma$. The map $T$ is determined by $T|_{E^+_\Gamma}$ and $T(v)$. The elements $T(v)$ and $T(e)$ for all $e\in E^+_{\Gamma}$ can be arbitrary elements of $G$.
\end{rem}

Any path $\gamma$ in $\Gamma$ can be written in a unique way as a sequence $\gamma=(v_k\gets v_{k-1}\gets\dots\gets v_1)$ where $v_1$ is the starting vertex of $\gamma$ and $v_k$ is the end-vertex of $\gamma$ and $(v_{i+1}\gets v_{i})\in E_\Gamma$ for all $i\in\{1,\dots,k-1\}$. Sometimes to simplify the notation, we will just write $\gamma=(v_k\gets v_1)$. We also extend the map $T$ to the space of paths as follows: if $\gamma=(v_k\gets v_{k-1}\gets\dots\gets v_1)$, then $T(\gamma)\coloneqq T(v_k\gets v_{k-1})\dots T(v_2\gets v_1)$.

\subsection{From local systems to representations}\label{Loc_to_Rep}

Let $\mathfrak L=(\Gamma,T,F^t,F^b)$ be a transverse framed $G$-local system on a tree $\Gamma$. Let $H$ be a subgroup of the group $\Aut(\Gamma)$ of all automorphisms of $\Gamma$.

\begin{df}
The local system $\mathfrak L=(\Gamma,T,F^t,F^b)$ is called \defin{$H$-invariant} if for all $\gamma\in H$ and for all edges $(w\gets v)\in E_\Gamma$, $v,w\in V_\Gamma$, $T(w\gets v)=T(\gamma w\gets \gamma v)$.
\end{df}

An $H$-invariant $G$-local system always gives rise to a homomorphism $\rho\colon H\to G$ in the following way: let $v\in V_\Gamma$, we define for every $\gamma \in H$:
\begin{equation}\label{Graph_representation}
\rho_\mathfrak L(\gamma)\coloneqq T(\gamma v)^{-1}T(v).
\end{equation}

\begin{rem}
If $\mathfrak L=(\Gamma,T,F^t,F^b)$ is an $H$-invariant local system, then the framing $(F^t,F^b)$ is $\rho_\mathfrak L$-equivariant, i.e.\ for all $v\in V_\Gamma$, $F_t(\gamma v)=\rho_\mathfrak L(\gamma)(F_t(v))$ and $F_b(\gamma v)=\rho_\mathfrak L(\gamma)(F_b(v))$.
\end{rem}

\begin{prop}
The map $\rho_\mathfrak L$ is a group homomorphism that does not depend on the choice of $v\in V_\Gamma$.
\end{prop}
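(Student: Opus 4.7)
The plan is to observe that the $H$-invariance of $\mathfrak L$ can be reformulated in a vertex form rather than an edge form. Using the cocycle relation $T(v') = T(v' \gets v) T(v)$, each edge value satisfies $T(v' \gets v) = T(v') T(v)^{-1}$, so the hypothesis $T(\gamma w \gets \gamma v) = T(w \gets v)$ rewrites as
$$T(\gamma w) T(\gamma v)^{-1} = T(w) T(v)^{-1}$$
for every edge $(w \gets v) \in E_\Gamma$. By telescoping along the unique path from $v$ to $w$ in the tree $\Gamma$, and using that $\gamma \in H \leq \Aut(\Gamma)$ sends this path to the unique path from $\gamma v$ to $\gamma w$, the same identity extends to every pair $v, w \in V_\Gamma$. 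Equivalently,
$$T(\gamma v)^{-1} T(v) = T(\gamma w)^{-1} T(w)$$
for all $v, w \in V_\Gamma$, which is exactly the independence of $\rho_\mathfrak L(\gamma)$ on the choice of base vertex.

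For the homomorphism property, I would then exploit this independence by evaluating $\rho_\mathfrak L(\gamma_1)$ at the vertex $\gamma_2 v$ instead of at $v$, so that
$$\rho_\mathfrak L(\gamma_1) = T(\gamma_1 \gamma_2 v)^{-1} T(\gamma_2 v), \qquad \rho_\mathfrak L(\gamma_2) = T(\gamma_2 v)^{-1} T(v).$$
Multiplying, the middle factor $T(\gamma_2 v) T(\gamma_2 v)^{-1}$ cancels and one is left with $T(\gamma_1 \gamma_2 v)^{-1} T(v) = \rho_\mathfrak L(\gamma_1 \gamma_2)$, as required.

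There is no real obstacle here: once the correct rewriting of $H$-invariance in terms of values at vertices is in place, both assertions reduce to one-line algebraic manipulations. It is worth noting that the proposition uses neither the framing $(F^t, F^b)$ nor the transversality condition — only the cocycle structure of $T$ together with the $H$-equivariance of its values on edges is needed, so the same statement would hold for any $H$-invariant $G$-local system on $\Gamma$, framed or not.
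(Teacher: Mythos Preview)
Your proof is correct and follows essentially the same approach as the paper: both rewrite the edge condition $T(\gamma w\gets\gamma v)=T(w\gets v)$ as $T(\gamma w)T(\gamma v)^{-1}=T(w)T(v)^{-1}$, deduce independence of the base vertex, and then verify the homomorphism property by evaluating one of the two factors at a shifted vertex so that the middle terms cancel. Your explicit mention of telescoping along the path from $v$ to $w$ (and its image under~$\gamma$) is a slightly more careful justification of a step the paper leaves implicit, and your closing remark that neither the framing nor transversality is used is accurate.
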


\begin{proof}
Let $w\in V_\Gamma$ be another vertex. Then
$$T(\gamma w)^{-1}T(w)
=(T(\gamma w\gets \gamma v)T(\gamma v))^{-1}(T(w\gets v)T(v))$$
$$=T(\gamma v)^{-1}T(\gamma w\gets \gamma v)^{-1}T(w\gets v)T(v)
=T(\gamma v)^{-1}T(v)$$
since $T(\gamma w\gets \gamma v)=T(w\gets v)$ by $H$-invariance. So $\rho_\mathfrak L$ does not depend on the choice of $v\in V_\Gamma$.

Let now $\gamma_1,\gamma_2\in H$. We obtain $\rho_\mathfrak L(\gamma_2)=T(\gamma_2 (\gamma_1 v))^{-1}T(\gamma_1 v)$ and $\rho_\mathfrak L(\gamma_1)=T(\gamma_1 v)^{-1}T(v)$. Therefore,
$$\rho_\mathfrak L(\gamma_2)\rho_\mathfrak L(\gamma_1)=T(\gamma_2 (\gamma_1 v))^{-1}T(\gamma_1 v)T(\gamma_1 v)^{-1}T(v)=T(\gamma_2 (\gamma_1 v))^{-1}T(v)=\rho_\mathfrak L(\gamma_2\gamma_1).$$
So $\rho_\mathfrak L$ is a group homomorphism.
\end{proof}



Let $S$ be a punctured surface as before. Let $\mathcal T$ be an ideal triangulation of $S$ and $\tilde\Gamma$ be a graph on $\tilde S$ defined in Section~\ref{sec:Graph_Gamma}. Every framing $F\colon \tilde P\to\F$ defines the unique framing $(F^t,F^b)$ on $\tilde\Gamma$ as follows: for every vertex $v$ of $\tilde\Gamma$ define $F^t(v)\coloneqq F(v^t)$ and $F^b(v)\coloneqq F(v^b)$ (see Figure~\ref{vt-vb}). In this case we say that $(F^t,F^b)$ is \defin{induced} by $F$. We also denote $F^r(v)\coloneqq F(v^r)$.

Let $r$ be an internal edge of $\tilde\Gamma$ that separates two triangles $\tau$ and $\tau'$, and $v\coloneqq (\tau,r)$, $v'\coloneqq (\tau',r)$. Then $F^t(v')=F^b(v)$ and $F^b(v')=F^t(v)$. Further, let $v$ and $v'$ two vertices of $\tilde\Gamma$ that lie in the same triangle and $(v')^t=v^b$. Then $F^t(v')=F^b(v)$, $F^b(v')=F^r(v)$, $F^r(v')=F^t(v)$.

The following proposition is immediate:

\begin{prop}
Let $(F^t,F^b)$ be a transverse framing on $\tilde\Gamma$ such that
\begin{itemize}
    \item for every internal edge $r$ of $\tilde\Gamma$ that separates two triangles $\tau$ and $\tau'$, and $v\coloneqq (\tau,r)$, $v'\coloneqq (\tau',r)$ holds $F^t(v')=F^b(v)$;
    \item for every two vertices $v$ and $v'$ of $\tilde\Gamma$ that lie in the same triangle and $(v')^t=v^b$ holds $F^t(v')=F^b(v)$.
\end{itemize}
Then there exist a unique framing $F\colon \tilde P\to\F$ that induces $(F^t,F^b)$.
\end{prop}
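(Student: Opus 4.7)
The plan is to invert the construction of the induced framing: for each puncture $\tilde p \in \tilde P$, I would pick any vertex $v \in V_{\tilde\Gamma}$ with $\tilde p \in \{v^t, v^b\}$ and set $F(\tilde p) \coloneqq F^t(v)$ or $F^b(v)$ according to which of $v^t, v^b$ equals $\tilde p$. For the surfaces under consideration every puncture of $\tilde S$ is an endpoint of at least one internal edge of $\tilde\T$, so such a $v$ always exists and $F \colon \tilde P \to \F$ is at least defined.

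The main step is to verify well-definedness. Fix $\tilde p$ and consider the set of vertices $v \in V_{\tilde\Gamma}$ with $\tilde p \in \{v^t, v^b\}$. These are organized in a fan around $\tilde p$: the triangles of $\tilde\T$ containing $\tilde p$ can be listed as $\ldots, \tau_{i-1}, \tau_i, \tau_{i+1}, \ldots$ with consecutive ones sharing an internal edge through $\tilde p$, and each $\tau_i$ contributes two vertices $v_i^-, v_i^+ \in V_{\tilde\Gamma}$ sitting on the two edges of $\tau_i$ through $\tilde p$. The transition $v_i^- \leftrightarrow v_i^+$ stays inside $\tau_i$ and, by inspection of the orientation convention defining $v^t, v^b$, falls under the hypothesis of the second bullet of the proposition with the common puncture being $\tilde p$; hence $F^t(v_i^+) = F^b(v_i^-)$ (or vice versa), and both represent the same candidate for $F(\tilde p)$. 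The transition $v_i^+ \leftrightarrow v_{i+1}^-$ crosses the shared edge of $\tau_i$ and $\tau_{i+1}$ and is exactly the situation of the first bullet, yielding the same conclusion. Chaining these transitions along the fan shows that any two vertex choices produce the same flag, so $F$ is well-defined.

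It then remains to check that $F$ induces $(F^t, F^b)$, which is automatic on the chosen representative vertex of each puncture and extends to all of $V_{\tilde\Gamma}$ by well-definedness. Uniqueness is immediate: any other framing $F'$ inducing $(F^t, F^b)$ must satisfy $F'(v^t) = F^t(v)$ and $F'(v^b) = F^b(v)$ for every $v$, so it agrees with $F$ on every puncture reachable as some $v^t$ or $v^b$, i.e.\ on all of $\tilde P$. The only nontrivial point is organizing the fan argument cleanly: the two hypotheses of the proposition are precisely tailored to the two types of transitions (within a triangle versus across an internal edge) encountered as one walks around a puncture, so once the orientation conventions are unpacked, the verification is a routine check.
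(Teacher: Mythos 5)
The paper gives no proof of this proposition (it is stated as immediate), so there is no argument of the authors to compare against; your fan argument is precisely the natural justification and it is correct as far as it goes: inside a triangle the two vertices lying on the two edges through a common puncture are related by the second hypothesis (one sees the puncture as $v^b$, the other as $(v')^t$), across an internal edge the two vertices are related by the first hypothesis, and chaining these transitions around the link of $\tilde p$ gives well-definedness, with uniqueness forced at every puncture that occurs as some $v^t$ or $v^b$. The one step you should not wave through is the claim that every puncture of $\tilde S$ is an endpoint of at least one internal edge of $\tilde{\mathcal T}$. This is true when $\bar S$ has no boundary (then every edge is internal), but it fails for external punctures cut off by an ``ear'' of the triangulation: in the paper's own quadrilateral example (Section on crossing an edge) the corners $p_3$ and $p_4$ meet only external edges, and indeed there the data $(F^t,F^b)$ records only $F_1$ and $F_2$, the flags at $p_3,p_4$ being carried by $F^r$ alone. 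At such punctures there is no vertex with which to define $F(\tilde p)$, and in fact $(F^t,F^b)$ contains no information about $F$ there, so uniqueness genuinely fails; this is an imprecision of the proposition as stated (it implicitly requires every puncture to meet an internal edge, or else the induced data must be taken to include $F^r$), and your proof is correct exactly in the cases where the statement itself is.
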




\subsection{From representations to local systems}\label{Moves}

In this section, we show how to construct a transverse framed local system on $\Gamma$ out of a transverse framed representation.

Let $\mathcal T$ be an ideal triangulation of $S$ and $\tilde\Gamma$ be a graph on $\tilde S$ defined in Section~\ref{sec:Graph_Gamma}. Given a framed homomorphism $(\rho,F)\in\Hom^{\fr}_\T(\pi_1(S,b),G)$, then the framing $F$ gives rise to the $\rho$-equivariant transverse framing $(F^t,F^b)$ of $\tilde\Gamma$.

\begin{rem}
If $\mathfrak L=(\tilde\Gamma,T,F^t,F^b)$ is a $\pi_1(S)$-invariant transverse framed trivial $G$-local system on $\tilde\Gamma$, then $F$ is a framing of $\rho_\mathfrak L$, but in general, $\rho\neq \rho_\mathfrak L$.
\end{rem}

We are going to define a local system $\mathfrak L=(\tilde\Gamma,T)$ such that $(F^t,F^b)$ will be its transverse framing and  $\rho_\mathfrak L=\rho$. We start with the two simplest cases when $S$ is a triangle and a triangulated quadrilateral, and then we describe a procedure for a general surface $S$.

\subsubsection{Triangle. Turn to the right and turn to the left in a triangle}\label{Turn}

Let $S=\tilde S$ be an ideal triangle with vertices $p_1,p_2,p_3$. The only triangulation $\T$ of $S$ consists of all external edges of $S$. Let $(\rho,F)$ be a $\T$-transverse framed homomorphism. Since $\pi_1(S)$ is trivial, $\rho$ is trivial. Let $F(p_i)=F_i$ for all $i\in\{1,2,3\}$. Let the graph $\Gamma=\tilde\Gamma$ consist of three vertices $v$, $v'$ and $v''$ and two edges $(v'\gets v)$ and $(v''\gets v)$.

Because of transversality by Lemma~\ref{lem:triple}, there exists an element $g\in G$ such that
$$g(F_1,F_2,F_3)=(\Pp,\omega\Pp,u\omega\Pp)$$
with $u\in\Up_*$ (see Figure~\ref{Turn_figure}).

We define $T(v)\coloneqq g$ and $T(v'\gets v)\coloneqq \omega u^{-1}$, then $T(v')=\omega u^{-1}g$ and
$$T(v')F^t(v')=T(v')F_3=T(v'\gets v)u\omega\Pp=\Pp,$$
$$T(v')F^b(v')=T(v')F_1=T(v'\gets v)\Pp=\omega \Pp.$$
Moreover $T(v')F_2=T(v'\gets v)\omega\Pp=\omega u^{-1}\omega \Pp$. Because of transversality there exist unique $\tilde u\in\Up_*$ and $q\in \Pp$ such that $\omega u^{-1}\omega=\tilde u\omega q$. This means:
$$T(v')F_2=T(v'\gets v)\omega\Pp=\tilde u\omega \Pp.$$

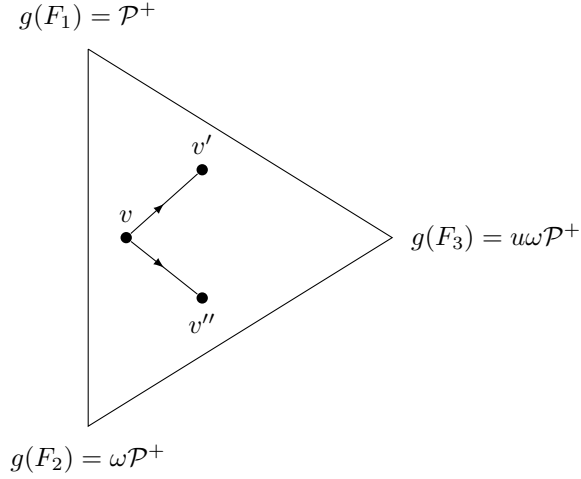
\begin{figure}[ht]
\begin{tikzpicture}
\draw (-3,3) node[label = ${g(F_1)=\Pp}$] (v1) {} -- (-3,-2) node[label = below:${g(F_2)=\omega\Pp}$] (v2) {} -- (1,0.5) node[label = right:${g(F_3)=u\omega\Pp}$] (v3) {} -- (-3,3);
\node[fill, circle, inner sep = 1.5pt, label=$v$](a) at (-2.5,0.5) {};
\node[fill, circle, inner sep = 1.5pt, label=$v'$](b) at (-1.5,1.4) {};
\node[fill, circle, inner sep = 1.5pt, label= below:$v''$](b') at (-1.5,-0.3) {};

\draw[middlearrow={latex}] (a) to (b) ;
\draw[middlearrow={latex}] (a) to (b');

\end{tikzpicture}
\caption{Turn to the left and turn to the right. Picture to have in mind}\label{Turn_figure}
\end{figure}

We call the transformation $T(v'\gets v)=:T_l(u)$ ``turn to the left in the triangle $(\Pp,\omega\Pp,u\omega\Pp)$''. Notice that the matrices $u$ and $\tilde u$ are in general different. We obtain the map: $L\colon\Up_*\to\Up_*$, $L(u)=\tilde u$.

In the similar way we define ``turn to the right in the triangle $(\Pp,\omega\Pp,u\omega\Pp)$''. Since by transversality we have $\omega u\omega=\hat u^{-1}\omega\hat q$ for $\hat u\in\Up_*$ and $\hat q\in \Pp$. We define $T(v''\gets v)\coloneqq\hat u\omega$. Then
$$T(v'')F^t(v'')=T(v'')F_3=T(v''\gets v)\omega\Pp=\hat u\omega \omega \Pp=\Pp,$$
$$T(v'')F^b(v'')=T(v'')F_2=T(v''\gets v)u\omega \Pp=\hat u\omega u\omega \Pp=\hat u\hat u^{-1}\omega \hat q\Pp=\omega \Pp.$$
$$T(v'')F_1 =T(v''\gets v)\Pp=\hat u\omega \Pp,$$
We call the transformation $T(v''\gets v)=:T_r(u)$ ``turn to the right in the triangle $(\Pp,\omega\Pp,u\omega\Pp)$'' (see Figure~\ref{Turn_figure}). Similarly to the map $L$, we define the map $R\colon\Up_*\to\Up_*$, $R(u)=\hat u$.

So we have constructed a transverse framed $G$-local system on $\Gamma$.

\begin{rem}
An easy calculation shows that $R(L(u))=L(R(u))=u$. The set of fixed points of $L$ in $\Up_*$ is
$$\Fix_{\Up_*}(L)=\{u\in\Up_*\mid (u\omega)^3\in\Pp\}=\{u\in\Up_*\mid (u\omega)^3\in\Norm_\L(u)\}.$$
The last equality holds by Proposition~\ref{uom_k}. Notice, that in general $L^3(u)\neq u$. The equality $L^3(u)= u$ holds if and only if $u\in \Fix_{\Up_*}(L)$.
\end{rem}

\subsubsection{Quadrilateral. Crossing an edge of triangulation}\label{Quadruple}

Let $S=\tilde S$ be an ideal quadrilateral with cyclically ordered vertices $p_1,p_4,p_2,p_3$. Let $\T$ be the triangulation that consists of all external edges of $S$ and one internal edge connecting $p_1$ and $p_2$.

\begin{figure}[ht]
{\begin{tikzpicture}

\draw (-3,3) node[label = $p_1$] (v1) {} -- (-3,0) node[label = below:$p_2$] (v2) {} -- (0,1.5) node[label = right:$p_3$] (v3) {} -- (-3,3);
\draw (-3,0) -- (-6,1.5) node[label = left:$p_4$] (v4) {} -- (-3,3);





\end{tikzpicture}}
\caption{Quadrilateral $(p_1,p_4,p_2,p_3)$}\label{Quadr_figure0}
\end{figure}
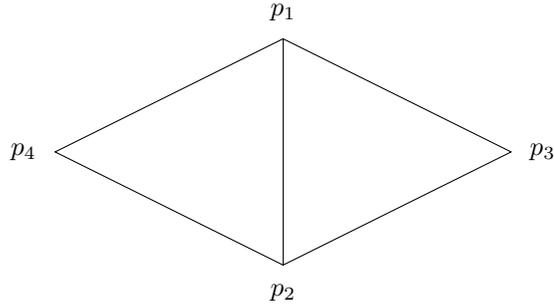

Let $(\rho,F)$ be a $\T$-transverse framed homomorphism. Since $\pi_1(S)$ is trivial, $\rho$ is trivial. Let $F(p_i)=F_i$ for all $i\in\{1,\dots,4\}$. Let the graph $\Gamma=\tilde\Gamma$ consist of two vertices $v$ and $v'$ and one edge $(v'\gets v)$ or $(v\gets v')$. Without loss of generality, we consider the first one.

\begin{figure}[ht]
\scalebox{1}{
\begin{tikzpicture}

\draw (-3,3) node[label = $\mathcal P^+$] (v1) {} -- (-3,-2) node[label = below:$\omega\mathcal P^+$] (v2) {} -- (0,0.5) node[label = left:$u\omega\mathcal P^+$] (v3) {} -- (-3,3);
\draw (-3,-2) -- (-6,0.5) node[label = right:$\omega u'\omega \mathcal P^+$] (v4) {} -- (-3,3);

\draw (-3+7,3) node[label = $\mathcal P^+$] (v1) {} -- (-3+7,-2) node[label = below:$\omega\mathcal P^+$] (v2) {} -- (0+7,0.5) node[label = left:$u'\omega\mathcal P^+$] (v3) {} -- (-3+7,3);
\draw (-3+7,-2) -- (-6+7,0.5) node[label = right:$\omega u\omega \mathcal P^+$] (v4) {} -- (-3+7,3);

\node[fill, circle, inner sep = 1.5pt, label= $v$](a) at (-2.5,0.5) {};
\node[fill, circle, inner sep = 1.5pt, label= $v'$](b) at (-3.5,0.5) {};
\draw[middlearrow={latex}] (a) to (b);

\node[fill, circle, inner sep = 1.5pt, label= $v'$](b) at (-2.5+7,0.5) {};
\node[fill, circle, inner sep = 1.5pt, label= $v$](a) at (-3.5+7,0.5) {};
\draw[middlearrow={latex}] (a) to (b);

\draw[middlearrow={latex}] (-.25,1.5) to[bend left] (1.5,1.5) ;
\draw[middlearrow={latex}] (1.5,-0.5) to[bend left] (-.25,-0.5) ;

\node[label = $\omega$] at (0.65,1.85) {};
\node[label = $\omega^{-1}$] at (0.65,-1.5) {};

\end{tikzpicture}}
\caption{Crossing an edge of triangulation}\label{Quadr_figure}
\end{figure}
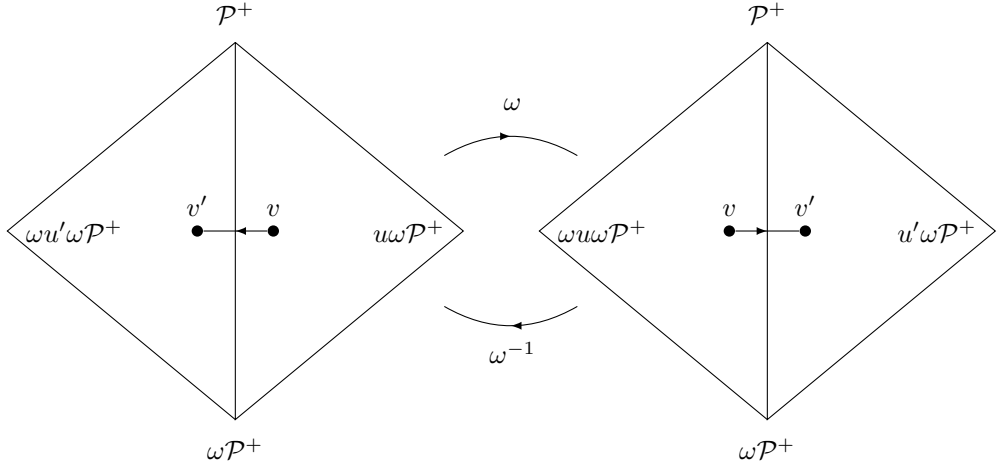

Because of transversality by Proposition~\ref{prop:quadr}, there exists an element $g\in G$ such that
$$g(F_1,F_4,F_2,F_3)=(\Pp,\omega u' \omega \Pp,\omega\Pp,u\omega\Pp)$$
with $u,u'\in\Up_*$ (see Figure~\ref{Quadr_figure}). We define $T(v)\coloneqq g$, $T(v'\gets v)\coloneqq\omega$, then $T(v')=\omega g$. The $G$-local system $(\Gamma,T)$ has a framing $F^t(v)=F^b(v')=F_1$ and $F^t(v')=F^b(v)=F_2$. So we have constructed a transverse framed $G$-local system on $\Gamma$.

The element $g$ is unique up to the left multiplication by an element of $\L$. Therefore:

\begin{cor}
The space $\Rep^{\fr}_\mathcal T(\pi_1(S),G)$ is homeomorphic to $(\Up_*)^2/\L$.
\end{cor}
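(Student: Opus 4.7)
The proof I would give is essentially a one-step reduction to Proposition~\ref{prop:quadr}. The plan is as follows.

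First I would observe that, since the quadrilateral $S$ is simply connected, $\pi_1(S)$ is trivial, so a framed homomorphism is nothing but a quadruple $(F_1,F_2,F_3,F_4)\in \F^4$ attached to the vertices $(p_1,p_2,p_3,p_4)$, and $\Rep^{\fr}_\mathcal T(\pi_1(S),G)$ is the space of $G$-orbits of such quadruples subject to the transversality condition. The triangulation $\mathcal T$ has triangles $(p_1,p_4,p_2)$ and $(p_1,p_2,p_3)$, so $\mathcal T$-transversality is exactly the requirement that both triples $(F_1,F_4,F_2)$ and $(F_1,F_2,F_3)$ be pairwise transverse. This matches the hypothesis of Proposition~\ref{prop:quadr} applied to $(F_1,F_4,F_2,F_3)$ (after the relabeling $F_1\mapsto F_1$, $F_4\mapsto F_2$, $F_2\mapsto F_3$, $F_3\mapsto F_4$ of the proposition's statement).

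Applying Proposition~\ref{prop:quadr}, I obtain an element $g\in G$ and a unique pair $(u,u'')\in (\Up_*)^2$ such that
\[
g(F_1,F_4,F_2,F_3)=\bigl(\Pp,\,(u'')^{-1}\omega\Pp,\,\omega\Pp,\,u\omega\Pp\bigr),
\]
the element $g$ being unique up to left multiplication by an element of $\L$. Thus the assignment $(F_1,\dots,F_4)\mapsto (u,u'')$ descends to a well-defined map $\Rep^{\fr}_\mathcal T(\pi_1(S),G)\to (\Up_*)^2/\L$, provided that the residual ambiguity $g\mapsto \ell g$ translates to an $\L$-action on $(\Up_*)^2$ which I still have to identify.

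To pin down that action, I would use the three basic facts $\ell\Pp=\Pp$, $\ell\omega\Pp=\omega\Pp$ (both because $\L$ normalizes $\Pp$ and $\omega\Pp$) and the stability of $\Up$ under conjugation by $\L$. Rewriting
\[
\ell g(F_1,F_4,F_2,F_3)=\bigl(\Pp,\,\ell (u'')^{-1}\ell^{-1}\omega\Pp,\,\omega\Pp,\,\ell u\ell^{-1}\omega\Pp\bigr),
\]
I conclude that the action is the diagonal conjugation $\ell\cdot(u,u'')=(\ell u\ell^{-1},\ell u''\ell^{-1})$, so the map lands in the quotient $(\Up_*)^2/\L$ and is bijective. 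The inverse is constructed by sending $(u,u'')$ to the (class of the) quadruple $(\Pp,(u'')^{-1}\omega\Pp,\omega\Pp,u\omega\Pp)$, which is manifestly continuous; continuity of the forward map follows from the continuous dependence of the Bruhat-type decomposition underlying Proposition~\ref{prop:quadr}. The main (mild) obstacle is precisely the computation of the $\L$-action above, which is purely formal once one uses the normal form $(u'')^{-1}\omega$ rather than $\omega u'\omega$; any other step is immediate.
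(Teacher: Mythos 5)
Your proposal is correct and follows essentially the same route as the paper: Section~\ref{Quadruple} likewise reduces the corollary to Proposition~\ref{prop:quadr} applied to the quadruple $(F_1,F_4,F_2,F_3)$, with the residual ambiguity of $g$ (left multiplication by $\L$) accounting for the quotient by the conjugation action. Your only deviation is cosmetic: you read off the parameters from the normal form $(\Pp,(u'')^{-1}\omega\Pp,\omega\Pp,u\omega\Pp)$ rather than $(\Pp,\omega u'\omega\Pp,\omega\Pp,u\omega\Pp)$, which makes the $\L$-action visibly the diagonal conjugation, exactly as in Theorem~\ref{main_thm}.
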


Let now $S$ be any punctured surface, $\mathcal T$ be an ideal triangulation of $S$ and $(\rho,F)\in\Hom^{\fr}_{\mathcal T}(\pi_1(S),G)$. Let $\tilde\Gamma$ be the graph as in Section~\ref{sec:Graph_Gamma} with the framing $(F^t,F^b)$ induced by $F$. We can assume that the base point of the fundamental group agrees with some of point $b\in V_\Gamma$ that lies in a triangle $\tau$. We take $\tilde b\in V_{\tilde\Gamma}$ a lift of $b$.

Starting at $\tilde b$ and applying inductively the procedure described in Section~\ref{Turn} for every triangle and in Section~\ref{Quadruple} for every pair of adjacent triangles, we obtain a transverse framed local system on $\tilde\Gamma$. This local system has the property that the representation described in~(\ref{Graph_representation}) agrees with $\rho$. However, this local system is in general not $\pi_1(S)$-invariant, although the framing is $\rho$-equivariant. In the proof of Theorem~\ref{main_thm} in Section~\ref{section:parametrization}, we will modify this local system to make it $\pi_1(S)$-invariant.

\section{Parametrization of the space of framed representations}\label{section:parametrization}

We are now ready to prove one of the main results of this paper. As before, we assume $G$ to be a semisimple Lie group, $\Pp$ to be a self-opposite parabolic subgroup and $\L$ be the Levi factor corresponding to $\Pp$ and $\F=G/\Pp$ to be the flag variety associated with $\Pp$. Further, $S$ is an orientable surface with Euler characteristic $\chi(S)$ and with $p_e$ external punctures. The next theorem gives a parametrization of the space of transverse framed representations.

\begin{teo}\label{main_thm}
The space $\Rep_\mathcal T^{\fr}(\pi_1(S,b),G)$ is homeomorphic to the following space:
$$((\Up_*)^{p_e-2\chi(S)}\times\L^{1-\chi(S)})/\L.$$
\end{teo}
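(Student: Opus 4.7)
\textbf{Proof plan for Theorem~\ref{main_thm}.}

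The plan is to associate to each representative $(\rho,F)$ of a class in $\Rep_\mathcal T^{\fr}(\pi_1(S,b),G)$ a collection of $p_e-2\chi(S)$ parameters in $\Up_*$ indexed by triangles of $\mathcal T$, plus $1-\chi(S)$ parameters in $\L$ indexed by the identifications on the boundary of a fundamental polygon, and then to quotient by the remaining $\L$-ambiguity. First I would fix a connected fundamental domain $D\subset\tilde S$ for the $\pi_1(S)$-action, formed by a union of lifted triangles of $\tilde{\mathcal T}$ and containing the base lift $\tilde b$. By the counting lemma in Section~\ref{sec:top_data}, $D$ contains exactly $\#T=p_e-2\chi(S)$ triangles. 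Since $\pi_1(S)$ is free of rank $1-\chi(S)$ (negative Euler characteristic and at least one puncture), the boundary $\partial D$ consists of pairs of edges identified by a free generating set $\gamma_1,\dots,\gamma_{1-\chi(S)}$ of $\pi_1(S)$.

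Next, pick a base vertex $v_{\tau_0}\in V^0_{\tilde\Gamma}\cap D$ in the distinguished triangle $\tau_0$, and choose $g_0=T(v_{\tau_0})\in G$ placing the triple $(F^t(v_{\tau_0}),F^b(v_{\tau_0}),F^r(v_{\tau_0}))$ in canonical position $(\Pp,\omega\Pp,u_{\tau_0}\omega\Pp)$ with $u_{\tau_0}\in\Up_*$; this is possible by Lemma~\ref{lem:triple}. Now propagate $T$ through the subtree of $\tilde\Gamma$ living in $D$ by applying, at each triangle, the turn-left/turn-right rules of Section~\ref{Turn} (which read off the new $u_\tau\in\Up_*$ at each triangle $\tau$ visited) and, at each interior edge of $D$, the edge-crossing rule $T(v'\gets v)=\omega$ from Section~\ref{Quadruple}. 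This produces, together with $g_0$, a transverse framed local system on the subtree of $\tilde\Gamma\cap D$, together with the tuple $(u_\tau)_{\tau\subset D}\in(\Up_*)^{p_e-2\chi(S)}$.

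For each generator $\gamma_i$, let $(w\gets v)$ be an edge of $\tilde\Gamma$ crossing the identified pair on $\partial D$ with $v\in V_{\tilde\Gamma}\cap D$ and $w=\gamma_i v'$ for a uniquely determined $v'\in V_{\tilde\Gamma}\cap D$. Impose the $\pi_1(S)$-invariance condition $T(w)=T(v')\rho(\gamma_i)^{-1}$, and set $T(w\gets v):=T(w)T(v)^{-1}$. The key observation is that both $T(v')$ and the value we would have obtained at $v$ from the simply-connected construction satisfy the same framing normalization with respect to $(\Pp,\omega\Pp)$, so the transition element factors as $T(w\gets v)=\ell_i\,\omega$ for a unique $\ell_i\in\Stab_G(\Pp,\omega\Pp)=\L$. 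This yields the second collection $(\ell_1,\dots,\ell_{1-\chi(S)})\in\L^{1-\chi(S)}$. The construction of Section~\ref{Loc_to_Rep}, applied to the full $\pi_1(S)$-equivariant extension of this local system, recovers $(\rho,F)$.

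Finally, I would check that the construction descends to the claimed quotient. The only choices made were $D$, $\tau_0$, and $g_0$. Replacing $g_0$ by $\ell g_0$ with $\ell\in\L$ conjugates every $u_\tau$ and every $\ell_i$ simultaneously by $\ell$, giving exactly the diagonal $\L$-action on $(\Up_*)^{p_e-2\chi(S)}\times\L^{1-\chi(S)}$; the independence of the choice of $D$ and $\tau_0$ follows by relating different choices through the same kind of $\L$-adjustment together with the obvious relabelings (and, for $\tau_0$, by the consistency of turn-rules already packaged in Section~\ref{Turn}). For the inverse direction, a tuple in the quotient reconstructs a unique transverse framed local system on $\tilde\Gamma\cap D$ via the normalized positions $(\Pp,\omega\Pp,u_\tau\omega\Pp)$, which then extends uniquely to a $\pi_1(S)$-equivariant transverse framed local system on all of $\tilde\Gamma$ by declaring the boundary crossings to carry $\ell_i\omega$; Section~\ref{Loc_to_Rep} then produces a transverse framed homomorphism, well-defined up to $G$-conjugation. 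Both maps are algebraic in the parameters, hence continuous, so the bijection is a homeomorphism. The main technical obstacle is the compatibility check in the third step: one must verify that the $\pi_1(S)$-invariance condition really is consistent at each boundary pair, i.e.\ that building $T(w)$ from the $D$-side and from the $\gamma_i D$-side yields the same element of $\L$ up to the prescribed $\ell_i$, and that no hidden relation among the $\ell_i$'s arises — this latter point reduces to the freeness of $\pi_1(S)$.
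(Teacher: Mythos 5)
Your proposal is correct and follows essentially the same route as the paper: fix a fundamental domain made of lifted triangles, propagate a normalized framed local system through $\tilde\Gamma$ via the turn and edge-crossing moves of Sections~\ref{Turn}--\ref{Quadruple}, read off one element of $\Up_*$ per triangle and one element of $\ell_i\in\L$ (from $T(w\gets v)=\ell_i\omega$) per identified pair of boundary edges, and observe that the only remaining ambiguity is the simultaneous conjugation by $\L$ coming from the choice of the normalizing element $g_0$. The only cosmetic differences from the paper's proof are that you index the $\L$-parameters by a free generating set of $\pi_1(S)$ rather than by the set $E_0$ of identified edge pairs (these are in bijection), and you should fix, as the paper does with $V^0_\Gamma$, one distinguished vertex per triangle at which the parameter $u_\tau$ is read, since the three vertices of a triangle carry values differing by the maps $L$ and $R$.
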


\begin{proof} 

Let $\rho\colon\pi_1(S,b)\to G$ be a $\T$-transverse homomorphism with a framing $F\colon \tilde P\to \F$ where $\T$ is an ideal triangulation of $S$. The framing $F$ induces a framing $(F^t,F^b)$ of $\tilde\Gamma$.

Let $b\in S$ be the base point of $\pi_1(S,b)$ and $\tilde b\in\tilde S$ be a lift of $b$. As before, we assume $\tilde b\in V_{\tilde\Gamma}$. Let $g\in G$ be such that $g(F^t(\tilde b),F^b(\tilde b))=(\Pp,\omega\Pp)$. We define
$$X\coloneqq\{(\rho,F,g)\mid (\rho,F)\in \Hom^{\fr}_{\mathcal T}(\pi_1(S,b),G),\; g\in G: g(F^t(\tilde b),F^b(\tilde b))=(\Pp,\omega\Pp)\}$$
and the equivalence relation $\sim$ on $X$ as follows: $(\rho,F,g)\sim (\rho',F',g')$ if and only if $(g\rho g^{-1},gF)=(g'\rho' g'^{-1},g'F')$.

Step 1: Assume $S=\bar S$ is a triangulated polygon and $\Gamma$ defined as in Section~\ref{sec:Graph_Gamma}. Let $(\rho,F,g)\in X$. We define $T(b)=g$ and then apply the procedure defined in Section~\ref{Moves}. We obtain in this way a framed transverse $G$-local system on $\Gamma$. Let $v\in V_\Gamma^0$. $v$ lies in the triangle with vertices $v^t,v^b,v^r\in P$. Let $u_v\in\Up_*$ be defined as follows $T(v)F(v^r)=u_v\omega\Pp$. The elements $u_v$ are uniquely defined for all $v\in V_\Gamma^0$.

Therefore, we obtain the following map:
$$\begin{matrix}
\mathcal X\colon & X & \to & (\Up_*)^{V_\Gamma^0}\\
 & (\rho,F,g) & \mapsto & (u_v)_{v\in V_\Gamma^0}.
\end{matrix}$$
Notice that $\mathcal X(\rho,F,g)=\mathcal X(\rho',F',g')$ if and only if  $(\rho,F,g)\sim(\rho',F',g')$. This map is surjective because any choice of $(u_v)_{v\in V_\Gamma^0}\in (\Up_*)^{V_\Gamma^0}$ defines a transverse framed $G$-local system on $\Gamma$ that by~\ref{Loc_to_Rep} defines a framed homomorphism $(\rho,F)\in\Hom^{\fr}_\T(\pi_1(S),G)$.

The $\L$-action on $X$ by the left multiplication in the last component and by componentwise conjugation on $(\Up_*)^{V_\Gamma^0}$ that is compatible with $\mathcal X$, i.e.\ $\mathcal X(\rho,D,lg)=(lu_vl^{-1}\mid v\in V_\Gamma^0)$.
We obtain the quotient map:
$$\bar{\mathcal X}\colon \Rep^{\fr}_{\mathcal T}(\pi_1(S,b),G)\to (\Up_*)^{V_\Gamma^0}/\L$$
which is a homeomorphism.

Step 2: Now let us turn to the general case. We choose a connected fundamental domain $S_0$ for $S$ in $\tilde S$ that completely consists of ideal triangles and $\tilde b$ lies in one of them. The fundamental domain $S_0$ is an ideal polygon, so we can apply the procedure above and obtain a transverse framed $G$-local system over $\Gamma_0$ which is a subgraph of $\tilde \Gamma$ contained in $S_0$. We want to extend this local system to the entire graph $\tilde\Gamma$ in a $\pi_1(S)$-invariant way.

To obtain a~$\pi_1(S)$-invariant local system, according to the Section~\ref{Loc_to_Rep}, we need to define $T(\tilde v)$ for every $\tilde v\in V_\Gamma$ as follows: let $\gamma\in\pi_1(S)$ be the unique element such that $\gamma v\in S_0$. We define $T(\tilde v)\coloneqq T(\gamma \tilde v)\rho(\gamma)$. From this follows immediately that for every $\tilde v\in V_{\tilde\Gamma}$ and for every $\gamma\in\pi_1(S)$, $T(\gamma\tilde v)=T(\tilde v)\rho(\gamma)^{-1}$.

The local system $(\tilde\Gamma,T)$ is indeed $\pi_1(S)$-invariant because: let $(w\gets v)\in E_{\tilde\Gamma}$ and $\gamma\in\pi_1(S)$. Then $$T(\gamma\tilde w\gets\gamma\tilde v)=T(\gamma\tilde w)T(\gamma\tilde v)^{-1}=T(\tilde w)\rho(\gamma)^{-1}\rho(\gamma)T(\tilde v)^{-1}=T(\tilde w)T(\tilde v)^{-1}=T(\tilde w\gets\tilde v).$$

The framing $(F^t,F^b)$ of $(\tilde\Gamma,T)$ is adapted to $(\tilde\Gamma, T)$. Indeed, if $\tilde v$ is contained in $S_0$, then by construction $T(\tilde v)(F^t(\tilde v),F^b(\tilde v))=(\Pp,\omega\Pp)$. If $\tilde v\in V_{\tilde\Gamma}$ any vertex, then there exist the unique $\gamma\in\pi_1(S)$ with $\gamma\tilde v$ is contained in $S_0$. Then
$$T(\tilde v)(F^t(\tilde v),F^b(\tilde v))=T(\gamma \tilde v)\rho(\gamma)\rho(\gamma)^{-1}(F^t(\gamma\tilde v),F^b(\gamma\tilde v))=T(\gamma \tilde v)(F^t(\gamma\tilde v),F^b(\gamma\tilde v))=(\Pp,\omega\Pp).$$

The framed local system $(\tilde \Gamma,T, F^t,F^b)$ is determined by its restriction to $\Gamma_0$ and the maps $T(\tilde w\gets \tilde v)$ for $\tilde v\in S_0$ and $\tilde w\notin S_0$. In general $T(\tilde w\gets \tilde v)\neq\omega$ although the edge $(\tilde w\gets \tilde v)$ intersects an edge of triangulation $\tilde\T$. We now want to understand the maps $T(\tilde w\gets \tilde v)$.

Let $e,e'$ be edges of the triangulation $\tilde\T$ that are in the boundary of $S_0$ but not in the boundary of $\tilde S$ and let $\gamma\in\pi_1(S)$ the unique element such that $\gamma(e)=e'$. Without loss of generality assume $(\tilde w\gets \tilde v)\in E^+_{\tilde\Gamma}$ is an edge that crosses $e$ and $\tilde v\in S_0$, $\tilde w\notin S_0$. Then the edge $(\gamma_i\tilde w\gets \gamma_i\tilde v)\in E^+_{\tilde\Gamma}$ crosses $e'$ and $\gamma_i\tilde v\notin S_0$, $\gamma_i\tilde w\in S_0$.
$$T(\tilde w\gets \tilde v)= T(\tilde w)T(\tilde v)^{-1}=T(\gamma \tilde w)\rho(\gamma)T(\tilde v)^{-1}.$$
Therefore:
$$T(\tilde w\gets \tilde v)(\Pp,\omega\Pp)
=T(\gamma \tilde w)\rho(\gamma)T(\tilde v)^{-1}(\Pp,\omega\Pp)
=T(\gamma \tilde w)\rho(\gamma)(F^t(\tilde v),F^b(\tilde v))$$
$$=T(\gamma \tilde w)(F^t(\gamma\tilde v),F^b(\gamma\tilde v))
=T(\gamma \tilde w)(F^b(\gamma\tilde w),F^t(\gamma\tilde w))
=(\omega\Pp,\Pp).$$
This means that there exist a unique $\ell_e\in\L=\Stab_G(\Pp,\omega\Pp)$ such that $T(\tilde w\gets \tilde v)=\ell_e\omega$. So for every pair $\{e,e'\}$ of external edges that are related by a nontrivial element of $\pi_1(S)$ we get an element of $\L$. We denote the set of such pairs $E_0$. Its cardinality is $1-\chi(S)$.

As in Step 1, we obtain a map
$$\begin{matrix}
\mathcal X\colon & X & \to & (\Up_*)^{V_\Gamma^0}\times\L^{E_0}\\
 & (\rho,F,g) & \mapsto & (u_v,l_e)_{v\in V_\Gamma^0,\;e\in E_0}.
\end{matrix}$$
Notice that $\mathcal X(\rho,F,g)=\mathcal X(\rho',F',g')$ if and only if  $(\rho,F,g)\sim(\rho',F',g')$. Further, this map is surjective because any choice of $(u_v,l_e)_{v\in V_\Gamma^0,\;e\in E_0}\in (\Up_*)^{V_\Gamma^0}\times\L^{E_0}$ defines a $\pi_1(S)$-invariant transverse framed $G$-local system on $\tilde\Gamma$ that by~\ref{Loc_to_Rep} defines a framed homomorphism $(\rho,F)\in\Hom^{\fr}_\T(\pi_1(S),G)$.

The $\L$-action on $X$ by the left multiplication in the last component and by componentwise conjugation on $(\Up_*)^{V_\Gamma^0}\times\L^{E_0}$ that is compatible with $\mathcal X$, i.e.\ $\mathcal X(\rho,D,lg)=(lu_vl^{-1},ll_el^{-1}\mid v\in V_\Gamma^0,\;e\in E_0)$.
We obtain the quotient map:
$$\bar{\mathcal X}\colon \Rep^{\fr}_{\mathcal T}(\pi_1(S,b),G)\to ((\Up_*)^{V_\Gamma^0}\times\L^{E_0})/\L$$
which is a homeomorphism.
\end{proof}

\section{Space of positive framed representations and its parametrization}\label{sec:positive}

So far, we only required the framed representation to be transverse with respect to the triangulation.In this section, we want to understand the space of positive framed representation in a similar way as we did it for the transverse ones in the previous section. For this, we assume $G$ to be a semisimple Lie group with a $\Theta$-positive structure. As before, we take $\Pp$ the parabolic subgroup corresponding to $\Theta$ and $\L_0$ the subgroup of the Levi subgroup $\L$ stabilizing the positive semigroup $\Upp$. We also consider the flag variety $\F\coloneqq G/\Pp$ as before. Further, $S$ is an orientable surface with Euler characteristic $\chi(S)$ and with $p_e$ external punctures (cf.~Section~\ref{sec:top_data}).

\begin{df} A framing $F$ is called \defin{positive} if for every cyclically oriented quadruple $(\tilde p_1,\dots,\tilde p_4)\in\tilde P^4$, the quadruple of flags $(F(\tilde p_1),\dots,F(\tilde p_4))$ is positive.
A homomorphism $\rho\in\Hom(\pi_1(S),G)$ is called \defin{positive} if it admits a positive framing $F$. The pair $(\rho, F)$ is called a \defin{positive framed homomorphism}.

The space of all positive framed homomorphisms is denoted by $\Hom^{\fr}_+(\pi_1(S),G)$. The space
$$\Rep^{\fr}_+(\pi_1(S),G)\coloneqq \Hom^{\fr}_+(\pi_1(S),G)/G$$
is called the \defin{space of positive framed representations}.
\end{df}

\begin{rem}
Every positive framed representation is transverse with respect to any ideal triangulation.
\end{rem}

\begin{rem}
The space of positive framed representations of an ideal $k$-gon is precisely $\Conf_k^+(\F)$.
\end{rem}

To simplify the notation, we say that a (positively oriented) quadrilateral $(\tilde p_1,\tilde p_2,\tilde p_3,\tilde p_4)\in\tilde P^4$ of $\tilde{\mathcal T}$ is \defin{positive with respect to the framing} $F$, if $(F(\tilde p_1),F(\tilde p_2),F(\tilde p_3),F(\tilde p_4))$ is a positive quadruple of flags.

\begin{prop}
Let $\mathcal T$ be a triangulation of $S$ and let $F$ be a framing such that for every positively oriented quadrilateral $(\tilde p_1,\tilde p_2,\tilde p_3,\tilde p_4)\in\tilde P^4$ of $\tilde{\mathcal T}$  the quadruple of flags $(F(\tilde p_1),F(\tilde p_2),F(\tilde p_3),F(\tilde p_4))$ is positive. Then the framing $F$ is positive.
\end{prop}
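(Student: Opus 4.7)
The plan is to propagate the quadrilateral-positivity hypothesis from $\mathcal T$ to every ideal triangulation of $S$ via flip invariance, and then realize any cyclic quadruple in $\tilde P$ as a triangulation quadrilateral of some such triangulation. By Fact~\ref{properties_positive_tuple} it suffices to show that every positively oriented quadruple $(\tilde p_1,\tilde p_2,\tilde p_3,\tilde p_4)\in \tilde P^4$ yields a positive quadruple of flags.

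The heart of the argument is flip invariance. Let $\mathcal T'$ be obtained from $\mathcal T$ by flipping the diagonal $[\tilde r_1,\tilde r_3]$ of a triangulation quadrilateral $(\tilde r_1,\tilde r_2,\tilde r_3,\tilde r_4)$ to $[\tilde r_2,\tilde r_4]$. A $\tilde{\mathcal T}'$-quadrilateral either coincides with a $\tilde{\mathcal T}$-quadrilateral (hence is positive by hypothesis) or it is new, combining one of the two new triangles, say $(\tilde r_2,\tilde r_3,\tilde r_4)$, with the old triangle $(\tilde r_2,\tilde r_3,\tilde q)$ across $[\tilde r_2,\tilde r_3]$, producing the cyclically ordered quadruple $(\tilde r_2,\tilde q,\tilde r_3,\tilde r_4)$. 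I will deduce its positivity from the positivity of the $\tilde{\mathcal T}$-quadruples $(\tilde r_1,\tilde r_2,\tilde q,\tilde r_3)$ and $(\tilde r_1,\tilde r_2,\tilde r_3,\tilde r_4)$ by showing that the 5-tuple $(\tilde r_1,\tilde r_2,\tilde q,\tilde r_3,\tilde r_4)$ is itself positive. Normalize the first sub-quadruple to $(\Pp,\omega\Pp,u_1\omega\Pp,u_1u_2\omega\Pp)$ with $u_1,u_2\in\Upp$. The second sub-quadruple provides a competing normalization of the triple $(\tilde r_1,\tilde r_2,\tilde r_3)$; by the uniqueness up to $\L_0$-action statement in Fact~\ref{properties_positive_tuple}, the two normalizations differ by conjugation by some $\ell\in\L_0$, from which one computes that $\tilde r_4$ maps to $u_1u_2 w\,\omega\Pp$ for some $w\in\Upp$. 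The 5-tuple then fits the canonical positive form $(\Pp,\omega\Pp,u_1\omega\Pp,u_1u_2\omega\Pp,u_1u_2w\omega\Pp)$ with $u_1,u_2,w\in\Upp$, so it is positive, and in particular every sub-quadruple, including $(\tilde r_2,\tilde q,\tilde r_3,\tilde r_4)$, is positive.

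Once flip invariance is proven, iteration together with the classical result that any two ideal triangulations of $S$ with vertex set $P$ are connected by a finite sequence of flips shows that the hypothesis propagates to every ideal triangulation of $S$. Finally, given an arbitrary cyclic quadruple $(\tilde p_1,\tilde p_2,\tilde p_3,\tilde p_4)\in \tilde P^4$, I complete the geodesic diagonal $[\tilde p_1,\tilde p_3]$ and the four sides of the ideal quadrilateral into a full non-crossing collection of ideal arcs on $S$, obtaining an ideal triangulation $\mathcal T''$ whose lift realizes the given quadruple as a triangulation quadrilateral; applying the (now established) hypothesis for $\mathcal T''$ yields the desired positivity.

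The main technical obstacle is precisely the normalization step inside the flip argument: one must verify that the change-of-normalization element $\ell$ lies in $\L_0$ and not merely in $\L$, because this is exactly what guarantees $w=\ell^{-1}v_2\ell\in\Upp$ (where $v_2$ parametrizes the second quadruple) and hence the canonical positive form of the 5-tuple. This uses the characterization of $\L_0$ as the stabilizer of $\Upp$ under conjugation in $\L$, together with the fact that $\Upp$ is closed under multiplication.
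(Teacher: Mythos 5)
Your flip-invariance lemma is correct, and its proof is essentially the paper's: the paper deduces the positivity of the $5$-tuple by applying the extension clause of Fact~\ref{properties_positive_tuple} to the same normalizing element $g$, while you recover it from the uniqueness-up-to-$\L_0$ of normalizations (which is also part of that Fact); these are equivalent routes, and your care about $\ell\in\L_0$ versus $\ell\in\L$ is exactly the right point, since $\L_0=\Stab_\L(\Upp)$ is what makes $w=\ell^{-1}v_2\ell\in\Upp$.

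The gap is in the final reduction. You propagate the hypothesis through ideal triangulations \emph{of the surface $S$} and then claim that an arbitrary cyclically ordered quadruple $(\tilde p_1,\tilde p_2,\tilde p_3,\tilde p_4)\in\tilde P^4$ can be realized as a quadrilateral of (the lift of) some ideal triangulation $\mathcal T''$ of $S$, by completing the diagonal $[\tilde p_1,\tilde p_3]$ and the four sides to a non-crossing arc system on $S$. This fails in general: the edges of an ideal triangulation of $S$ are \emph{simple}, pairwise disjoint arcs, whereas the projection to $S$ of a geodesic joining two points of $\tilde P$ is typically a non-simple arc (already on a once-punctured torus, most geodesics between two lifts of the cusp project to self-intersecting arcs, and distinct sides of your quadrilateral may also cross each other in projection). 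Hence only a restricted family of quadruples in $\tilde P^4$ ever appears as a quadrilateral of a $\pi_1(S)$-equivariant triangulation, and your argument does not reach the remaining ones — but the definition of a positive framing requires \emph{all} cyclic quadruples. The paper avoids this by never leaving the universal cover: given the quadruple, it chooses a finite triangulated polygon $\Pi\subset\tilde S$, made of triangles of $\tilde{\mathcal T}$, having the four points among its vertices, and then re-triangulates $\Pi$ so that the quadruple becomes a quadrilateral; triangulations of a polygon are flip-connected, and these re-triangulations need not be equivariant, so no projection issue arises. Your flip-invariance argument applies verbatim in that setting, so the proof is repaired by replacing your global step with this local (polygonal) one.
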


\begin{proof}
For every quadruple $(\tilde q_1,\tilde q_2,\tilde q_3,\tilde q_4)\in\tilde P^4$ there exists a (triangulated, finite) polygon $\Pi$ in $\tilde S$ with vertices in $\tilde P$ and edges in $\tilde{\mathcal T}$ such that $\tilde q_1,\tilde q_2,\tilde q_3,\tilde q_4$ are vertices of $\Pi$. There exist a triangulation $\tilde{\mathcal T'}$ of $\Pi$ containing $(\tilde q_1,\tilde q_2,\tilde q_3,\tilde q_4)$ as a quadrilateral. Any two ideal triangulation of $\Pi$ are connected by a sequence of flips. Therefore we need to check the following statement: if $\tilde{\mathcal T}$ and $\tilde{\mathcal T'}$ differ by a flipping of one edge, and all quadrilaterals of $\tilde{\mathcal T}$ are positive with respect to $F$ then all quadrilaterals of $\tilde{\mathcal T'}$ are positive with respect to $F$ as well.


Let $Q\coloneqq (\tilde p_1,\tilde p_2,\tilde p_3,\tilde p_4)$ be a quadrilateral in $\tilde{\mathcal T}$ and the edge $(\tilde p_1,\tilde p_3)$ is in $\tilde{\mathcal T}$. We do the flip in this quadrilateral, i.e.\ we exchange the edge $(\tilde p_1,\tilde p_3)$ by the edge $(\tilde p_2,\tilde p_4)$. Quadrilaterals that do not intersect $Q$ are not affected by this flip.  So, without loss of generality, we take a vertex  $\tilde p_5$  such that the triangle $(\tilde p_1,\tilde p_2,\tilde p_5)$ is in $\tilde{\mathcal T}$. We need to check if the quadruple $(F(\tilde p_1),F(\tilde p_5),F(\tilde p_2),F(\tilde p_4))$ is positive.

Since the quadrilaterals $(\tilde p_1,\tilde p_2,\tilde p_3,\tilde p_4)$ and $(\tilde p_1,\tilde p_5,\tilde p_2,\tilde p_3)$ are in $\tilde{\mathcal T}$, the quadruples $(F(\tilde p_1),F(\tilde p_2),F(\tilde p_3),F(\tilde p_4))$ and $(F(\tilde p_1),F(\tilde p_5),F(\tilde p_2),F(\tilde p_3))$ are positive, i.e.\;there exists an element $g\in G$ such that
$$g(F(\tilde p_2),F(\tilde p_3),F(\tilde p_4),F(\tilde p_1))=(\Pp,\omega\Pp,u_1\omega\Pp,u_1u_2\omega\Pp)$$
where $u_1,u_2\in\Upp$. Further, by Fact~\ref{properties_positive_tuple},
$$g(F(\tilde p_2),F(\tilde p_3),F(\tilde p_1),F(\tilde p_5))= (\Pp,\omega\Pp,u_1u_2\omega\Pp,u_1u_2u_3\omega\Pp)$$
for some $u_3\in \Upp$. Therefore the 5-tuple $(F(\tilde p_1),F(\tilde p_5),F(\tilde p_2),F(\tilde p_3),F(\tilde p_4))$ is positive. In particular, the quadruple $(F(\tilde p_1),F(\tilde p_5),F(\tilde p_2),F(\tilde p_4))$ is positive.
\end{proof}

\begin{cor}
To show that a framing $F$ is positive, it is enough to check that for some ideal triangulation all positively oriented quadrilaterals of this triangulation are positive with respect to $F$.
\end{cor}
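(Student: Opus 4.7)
The plan is to observe that this corollary is essentially a direct reformulation of the preceding proposition, which already establishes the key content. The proposition states that if $\mathcal T$ is \emph{any} fixed ideal triangulation of $S$ and all positively oriented quadrilaterals of $\tilde{\mathcal T}$ are positive with respect to $F$, then $F$ itself is a positive framing (meaning every cyclically ordered quadruple of flags in the image of $F$ is positive, not merely those coming from quadrilaterals of $\tilde{\mathcal T}$). The corollary rephrases this as a practical verification criterion.

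More precisely, I would argue as follows. Suppose we are given a framing $F$ and some ideal triangulation $\mathcal T$ of $S$ such that every positively oriented quadrilateral $(\tilde p_1, \tilde p_2, \tilde p_3, \tilde p_4)$ of the lift $\tilde{\mathcal T}$ has $(F(\tilde p_1), F(\tilde p_2), F(\tilde p_3), F(\tilde p_4))$ positive. This is exactly the hypothesis of the preceding proposition for the particular triangulation $\mathcal T$. Applying that proposition yields that $F$ is positive, which is precisely the desired conclusion.

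There is no real obstacle: the only subtle point is noticing that the proposition is formulated for an arbitrary ideal triangulation, so the existential quantifier in the corollary (``for some ideal triangulation'') is immediately absorbed. In particular, ideal triangulations of punctured surfaces of negative Euler characteristic always exist, so the criterion is nonvacuous, and one is free to choose the most convenient triangulation when checking positivity in practice. The proof therefore reduces to a single sentence citing the proposition, and no further combinatorial or flip-sequence argument is needed beyond what was already done there.
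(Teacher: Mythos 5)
Your proposal is correct and matches the paper's (implicit) argument: the corollary is stated without a separate proof precisely because it is a direct application of the preceding proposition to the chosen triangulation, and that proposition's flip-sequence argument already carries all the content. Citing the proposition for the given $\mathcal T$ is exactly what is intended.
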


The following theorem gives a parametrization of the space of positive framed representations.

\begin{teo}\label{main_thm_positive}
The space $\Rep_+^{\fr}(\pi_1(S,b),G)$ of positive framed representations is homeomorphic to the following space:
$$((\Upp)^{p_e-2\chi(S)}\times\L_0^{1-\chi(S)})/\L_0.$$
Let $n_\beta$ be the number of times that the space $\up_\beta$ appears in the domain of definition of the~map $F$ defined in Section~\ref{positive_groups} and $K_0$ be a maximal compact subgroup of $\L_0$.
Then the space $\Rep_+^{\fr}(\pi_1(S,b),G)$ is homeomorphic to the following spaces:
$$\left(\prod_{\beta\in\Theta}(\mathring c_\beta)^{n_\beta(p_e-2\chi(S))}\times\L_0^{1-\chi(S)}\right)/\L_0\,,\quad\left(\prod_{\beta\in\Theta}(\mathring c_\beta)^{n_\beta(p_e-2\chi(S)) -\chi(S)}\times K_0^{1-\chi(S)}\right)/K_0$$
where $\L_0$ and $K_0$ act by the adjoint action on every $\mathring c_\beta$ and by conjugation on $\L_0$.
\end{teo}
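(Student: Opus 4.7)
The plan is to follow the proof of Theorem~\ref{main_thm} while tracking how positivity restricts the parameters, and then to reparametrize using the cone decomposition of $\Upp$ and Proposition~\ref{Levi-symmetric_space}.

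For the first homeomorphism, I keep the construction of the map $\mathcal X$ of that proof verbatim, producing parameters $(u_v)_{v\in V_\Gamma^0}$ and $(\ell_e)_{e\in E_0}$ from a $\T$-transverse framed homomorphism. The claim is that positivity of $(\rho,F)$ is equivalent to $u_v\in\Upp$ for every triangle-vertex $v$ and $\ell_e\in\L_0$ for every glued pair of edges $e$. That $u_v\in\Upp$ is immediate, since the triple of flags at the three punctures of a triangle is a sub-triple of a positive tuple, hence positive by Fact~\ref{properties_positive_tuple}. For the $\ell_e$: computing the $T(v)$-image of the fourth flag of the quadrilateral crossing $e$ gives $\omega^{-1}\ell_e^{-1}u_{\tilde w}\omega\Pp$; comparing with the positive analog of Proposition~\ref{prop:quadr}, and using the fact (proved just before Proposition~\ref{uom_k}) that $\omega$ exchanges the standard diamonds $\mathcal D^+$ and $\mathcal D^-$, the positivity of this quadruple becomes equivalent to $\ell_e^{-1}\Upp\ell_e=\Upp$, i.e.\ $\ell_e\in\L_0$. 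Conversely, given parameters $u_v\in\Upp$ and $\ell_e\in\L_0$, the framed local system reconstructed as in the transverse case is positive on every quadrilateral of $\tilde{\mathcal T}$: those inside the fundamental domain $S_0$ because the $u_v$ are positive, and those crossing a boundary edge of $S_0$ because $\ell_e\in\L_0$ preserves $\Upp$. The gauge group reduces from $\L$ to $\L_0$ because only $\L_0$-conjugation preserves positivity of the parameters.

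The second homeomorphism follows by applying, to each of the $p_e-2\chi(S)$ copies of $\Upp$, the $\L$-equivariant diffeomorphism from Section~\ref{positive_groups} identifying $\Upp$ with $\prod_{\beta\in\Theta}(\mathring c_\beta)^{n_\beta}$, since all relevant actions are compatible with this identification.

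For the third homeomorphism, I use Proposition~\ref{Levi-symmetric_space}, which provides an $\L_0$-equivariant diffeomorphism $\L_0/K_0\cong \prod_{\beta\in\Theta}\mathring c_\beta$ with $K_0$ as the stabilizer of a chosen basepoint. The Cartan-type decomposition $\L_0\cong K_0\times \prod_\beta \mathring c_\beta$ (as manifolds) applied to each of the $1-\chi(S)$ copies of $\L_0$, combined with the standard reduction $(Z\times \L_0/K_0)/\L_0\cong Z/K_0$ (valid for any $\L_0$-space $Z$, with $\L_0$ acting on $\L_0/K_0$ by translation), converts the $\L_0$-quotient in the second formula into a $K_0$-quotient. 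The bookkeeping absorbs one copy of $\prod_\beta \mathring c_\beta$ into the reduction $\L_0\to K_0$, leaving $(1-\chi(S))-1=-\chi(S)$ additional copies and producing the exponent $n_\beta(p_e-2\chi(S))-\chi(S)$ stated in the theorem.

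The main obstacle is the equivalence between $\ell_e\in\L_0$ and the positivity of the quadrilateral across~$e$: the computation must trace carefully through the appearance of $\omega$ and $\omega^{\pm 2}$, the characterization of positive quadruples, and the $\L_0$-invariance of $\Upp$. Once that equivalence is in place, the rest of the argument is a straightforward adaptation of the transverse case together with the clean identifications supplied by Section~\ref{positive_groups} and Proposition~\ref{Levi-symmetric_space}.
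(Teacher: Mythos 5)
Your proposal takes essentially the same route as the paper, whose proof of this theorem is only a two-sentence remark: rerun the proof of Theorem~\ref{main_thm} keeping track of positivity, then use the cone parametrization of $\Upp$ from Section~\ref{positive_groups} and Proposition~\ref{Levi-symmetric_space} together with the polar decomposition of $\L_0$ to pass to the $K_0$-quotient, with exactly the bookkeeping you describe. The one point where you are more compressed than you should be is the claimed equivalence of positivity of the quadrilateral across $e$ with $\ell_e\in\L_0$: positivity of that single quadruple only yields that $\ell_e$ conjugates one specific element of $\Upp$ into $\Upp$, and promoting this to $\ell_e\in\L_0$ is precisely the uniqueness-up-to-$\L_0$ of the normalizing element in Fact~\ref{properties_positive_tuple}, which should be invoked explicitly at that step.
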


The proof of the first statement of the theorem essentially agrees with the proof of Theorem~\ref{main_thm}. Using Proposition~\ref{Levi-symmetric_space} and the polar decomposition for $\L_0$, we obtain the last statement.

\section{Homotopy type of the space of positive framed representations}\label{sec:homotopy_type}

The goal of this section is to describe the homotopy type of the space of positive framed representation. More precisely, we show that the space of positive framed representations admits a strong deformation retract that is a quotient of a compact manifold without boundary by an action of a compact group.

To do this a key point is to introduce the space of \defin{degenerate representations}. For this, we consider the points $(u_\Theta,\dots,u_\Theta,k_1,\dots k_{1-\chi(S)})\in((\Upp)^{p_e-2\chi(S)}\times\L_0^{1-\chi(S)})$ where $u_\Theta$ is the special element in $\Upp$ defined in~(\ref{special_u}) and all $k_i\in\Norm_{G}(u_\Theta)$. We remind that $u_\Theta=F(v_{\beta_{i_1}},\dots,v_{\beta_{i_l}})$ for fixed elements $v_\beta\in \mathring c_\beta$ for all $\beta\in \Theta$. Notice that $K_0\coloneqq \Norm_{G}(u_\Theta)\subseteq \L_0$ is a maximal compact subgroup of $\L_0$. A representation with parameters as above is called \defin{degenerate}. We denote by $\mathcal D$ the subspace of all degenerate representations in $\Rep_+^{\fr}(\pi_1(S,b),G)$. A direct computation using Proposition~\ref{properties_special_u} shows that holonomies around punctures for a degenerate representation with parameters as above are conjugated to $ku_\Theta^s$ for some $s\in\Z\bs\{0\}$ and $k\in \Norm_{\L_0}(u_\Theta)$.

\begin{prop}
Let $u\in\Up$ and $u_\Theta\in\Upp$ as above. There exists $N=N(u)\in\N$ such that $u_\Theta^nu,uu_\Theta^n\in\Upp$ for all $n\geq N$.
\end{prop}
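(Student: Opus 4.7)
The plan is to exploit the Cartan one-parameter subgroup of the principal $\SL_2$-subgroup $H_\Theta$ to simultaneously rescale powers of $u_\Theta$ and contract arbitrary elements of $\Up$ to the identity, and then combine this with the openness of $\Upp$ and its $\L_0$-invariance.

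First I would record two structural inputs. On the one hand, $\Upp$ is open in $\Up$: the restriction of $F$ to the open set $\prod_\beta \mathring c_\beta$ is a diffeomorphism onto $\Upp$, so by matching dimensions and the inverse function theorem $\Upp$ is open. On the other hand, by the very definition of $\L_0$ as the stabilizer of $\Upp$ in $\L$, conjugation by any element of $\L_0$ is an automorphism of $\Upp$.

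Next, using the positive principal embedding $q\colon \SL_2(\R) \to H_\Theta$ recalled in the proof of Proposition~\ref{properties_special_u}, I set $a_s \coloneqq q\bigl(\mathrm{diag}(s, s^{-1})\bigr)$ for $s > 0$. The one-parameter subgroup $\{a_s\}$ is connected and lies in $\L$, hence in $\L_0$, so conjugation by $a_s$ preserves $\Upp$. A direct $\SL_2(\R)$ computation yields the scaling law $a_s u_\Theta^n a_s^{-1} = u_\Theta^{n s^2}$ for every $n \in \N$ and $s > 0$. The analytic input that I expect to be most delicate is the contraction property: for every $u \in \Up$, $a_s u a_s^{-1} \to e$ as $s \to 0^+$. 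This amounts to the Cartan element $h \in \Lie(H_\Theta)$ acting on $\up$ with strictly positive eigenvalues, which rests on the principal $\sl_2$-theory of the split real subgroup $H'_\Theta$ together with the structure of the nilradical $\up$ of $\Pp$; this is the only place where the $\Theta$-positive structure genuinely enters.

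Granting these inputs, the endgame is immediate. Setting $s = 1/\sqrt n$,
\[
a_{1/\sqrt n}\, (u_\Theta^n u)\, a_{\sqrt n} \;=\; u_\Theta \cdot \bigl(a_{1/\sqrt n}\, u\, a_{\sqrt n}\bigr) \;\xrightarrow[n \to \infty]{}\; u_\Theta \in \Upp.
\]
By openness of $\Upp$ the left-hand side lies in $\Upp$ for all $n \ge N(u)$, and $\L_0$-invariance of $\Upp$ under conjugation by $a_{\sqrt n}$ transports this back to $u_\Theta^n u \in \Upp$. The mirror image $a_{1/\sqrt n}(u\, u_\Theta^n) a_{\sqrt n} = \bigl(a_{1/\sqrt n}\, u\, a_{\sqrt n}\bigr) \cdot u_\Theta \to u_\Theta$ handles $u\, u_\Theta^n$ in the same way.
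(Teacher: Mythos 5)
Your proposal is correct and follows essentially the same route as the paper: the paper sets $\ell_k\coloneqq q(\mathrm{diag}(\sqrt k,1/\sqrt k))\in\L_0$ (your $a_{\sqrt k}$), uses $\ell_k^{-1}u_\Theta^k\ell_k=u_\Theta$ together with $\ell_k^{-1}u\ell_k\to e$ (principality of $q$), and concludes by openness of $\Upp$ and $\L_0$-invariance, exactly as you do. The only difference is presentational: you make explicit the two inputs (openness of $\Upp$ via the parametrization $F$, positivity of the Cartan weights on $\up$) that the paper simply asserts.
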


\begin{proof}
We prove that $uu_\Theta^N\in\Upp$. The second inclusion can be proven similar. Without loss of generality, we assume $G$ to be the adjoint form. Let $q\colon\SL_2(\R)\to G$ be the positive principal homomorphism which maps upper triangular matrices to $\Pp$, lower triangular matrices to $\Pm$ and such that $q\begin{pmatrix}1 & 1 \\ 0 & 1\end{pmatrix}=u_\Theta$ and $q\begin{pmatrix}0 & 1 \\ -1 & 0\end{pmatrix}=\omega_\Theta$. Let $\ell_k\coloneqq q\begin{pmatrix}\sqrt{k} & 0 \\ 0 & \sqrt{k^{-1}}\end{pmatrix}\in\L_0$ where $k\in\N$. Then $\ell_ku_\Theta \ell_k^{-1}=q\begin{pmatrix}1 & k \\ 0 & 1\end{pmatrix}=u_\Theta^k$.

Further, $\ell_k^{-1} u u^k_\Theta \ell_k=\ell_k^{-1} u \ell_k u_\Theta$. Since $q$ is a principal positive homomorphism, $\ell_k^{-1} u \ell_k$ converges to the identity element in $\Up$. Since $\Upp$ is open in $\Up$, there exists $N\in\N$ such that $\ell_n^{-1} u u^n_\Theta \ell_n=\ell_n^{-1} u \ell_n u_\Theta\in\Upp$ for all $n\geq N$. Since $\ell_N\in\L_0=\Norm_\L(\Upp)$, $u u^N_\Theta\in\Upp$.
\end{proof}

\begin{prop}
The element $ku^s_\Theta$ for some $s\in\Z\bs\{0\}$ and $k\in \Norm_{G}(u_\Theta)$ has $\Pp$ as the unique invariant flag in $\F$. In particular, any degenerate representation has exactly one framing.
\end{prop}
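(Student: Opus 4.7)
The first claim, that $\Pp$ is fixed by $ku_\Theta^s$, is immediate: since $k\in K_0\subseteq\L_0\subseteq\L\subseteq\Pp$ and $u_\Theta^s\in\Up\subseteq\Pp$, we have $ku_\Theta^s\in\Pp=\Stab_G(\Pp)$. For uniqueness, the plan is to reduce to the unipotent $u_\Theta$ via a Jordan-type argument. Because $K_0$ is compact, $k$ is semisimple in the algebraic sense, and by Proposition~\ref{Levi-symmetric_space} together with the description of $K_0$ as the $\L_0$-stabilizer of $u_\Theta$, the element $k$ centralizes $u_\Theta$. Hence $ku_\Theta^s=k\cdot u_\Theta^s$ is the Jordan decomposition of $ku_\Theta^s$, and any flag fixed by $ku_\Theta^s$ is fixed by each Jordan component, in particular by $u_\Theta^s$. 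Since $s\neq 0$, the cyclic group $\langle u_\Theta^s\rangle$ is Zariski-dense in the one-parameter unipotent subgroup generated by $u_\Theta$, so every $u_\Theta^s$-fixed flag is in fact $u_\Theta$-fixed. It therefore suffices to show $\F^{u_\Theta}=\{\Pp\}$.

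To establish $\F^{u_\Theta}=\{\Pp\}$, I would split according to the Bruhat position of a hypothetical fixed flag $F$. For $F$ transverse to $\Pp$, the parametrization of transverse flags proved at the start of Section~\ref{sec:preliminaries} gives $F=u\omega\Pp$ for a unique $u\in\Up$; the identity $u_\Theta\cdot u\omega\Pp=u\omega\Pp$ translates into $u^{-1}u_\Theta u\in\omega\Pp\omega^{-1}=\Pm$, and as $u^{-1}u_\Theta u$ already lies in $\Up$, it belongs to $\Up\cap\Pm=\{e\}$, forcing $u_\Theta=e$, which is absurd. For $F$ not transverse to $\Pp$ and $F\neq\Pp$, I would exploit the positive principal homomorphism $q\colon\SL_2(\R)\to H_\Theta\leq G$ introduced in Section~\ref{positive_groups}. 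Setting $\ell_t\coloneqq q(\diag(t,t^{-1}))$, a short computation yields $\ell_t u_\Theta\ell_t^{-1}=u_\Theta^{t^2}$, so by the same Zariski-density observation the fixed locus $\F^{u_\Theta}$ is $\ell_t$-invariant. Any $F\in\F^{u_\Theta}$ then produces a limit flag $\lim_{t\to\infty}\ell_t F$ that lies in $\F^{u_\Theta}$ and is additionally $\ell_t$-fixed, hence fixed by the full image under $q$ of the upper-triangular Borel of $\SL_2(\R)$; analysing the fixed-point set of this solvable subgroup on $\F$ should pin down the only possibility as $\Pp$.

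The main obstacle is rigorously excluding non-transverse fixed flags $F\neq\Pp$ in this last step: one must control all orbits of $q(\SL_2(\R))$ on $\F$ and the Borel-fixed points they contain. A cleaner alternative is to invoke that $u_\Theta$ is a principal unipotent of the split real subgroup $H'_\Theta\leq G$ with Weyl group $W(\Theta)$; classical results of Kostant and Springer identify the fixed-point set of such a principal unipotent on any partial flag variety of $G$ with a single point, yielding $\F^{u_\Theta}=\{\Pp\}$ at once. Once $\F^{ku_\Theta^s}=\{\Pp\}$ is secured, the final clause on framings is immediate: for a degenerate representation the holonomy $\rho(c_i)$ around each peripheral element $c_i$ is by construction $G$-conjugate to some $ku_\Theta^s$ with $k\in K_0$ and $s\in\Z\bs\{0\}$, and the framing value $F(\tilde p_i)$ at the corresponding puncture, being $\rho(c_i)$-invariant, must equal the unique such flag, namely the $G$-translate of $\Pp$ by any element conjugating $ku_\Theta^s$ to $\rho(c_i)$. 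The framing is therefore completely determined by the representation.
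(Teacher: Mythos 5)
Your reduction of the problem is fine and even a little slicker than the paper's in one respect: since $k$ centralizes $u_\Theta$ and is semisimple, the Jordan-decomposition argument plus Zariski density of $\langle u_\Theta^s\rangle$ in the one-parameter unipotent group legitimately reduces everything to showing $\F^{u_\Theta}=\{\Pp\}$, and your treatment of the transverse cell ($F=u\omega\Pp$ forces $u^{-1}u_\Theta u\in\Up\cap\Pm=\{e\}$) is correct. The gap is exactly where you flag it, and neither of your two routes closes it. The limiting argument with $\ell_t=q(\diag(t,t^{-1}))$ only produces information about an accumulation point $\lim_t\ell_t F$ of the orbit, which may well be $\Pp$ without this saying anything about $F$ itself; so it cannot by itself exclude fixed flags in intermediate Bruhat position. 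The ``cleaner alternative'' is not available: the Kostant--Springer uniqueness of the fixed point of a principal unipotent concerns a regular unipotent element \emph{of $G$} acting on flag varieties \emph{of $G$}, whereas $u_\Theta$ is principal only in the subgroup $H'_\Theta$ and is in general very far from regular in $G$. For instance, for $G=\Sp_{2n}(\R)$ with the Hermitian positive structure, $u_\Theta=\begin{pmatrix}I & B\\ 0 & I\end{pmatrix}$ with $B$ positive definite has $n$ Jordan blocks; it fixes a unique Lagrangian precisely \emph{because} $B>0$. If one replaces $B$ by an indefinite invertible symmetric matrix, the resulting element is again the principal unipotent of an embedded split $\SL_2$ representing the same Weyl element $w_\Theta^0$, yet it fixes several Lagrangians (e.g.\ for $B=\diag(1,-1)$ the span of $(0,y_0)$ and $(By_0,0)$ with $By_0\perp y_0$ is an invariant Lagrangian). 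So no statement that only uses ``principal unipotent of a split subgroup with Weyl group $W(\Theta)$'' can be true; positivity must enter.

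The paper's proof supplies exactly this missing positivity input. It writes an arbitrary flag as $ux\Pp$ with $u\in\Up$ and $x$ a lift of an element of $W(\Theta)$, uses the preceding proposition ($u_\Theta^{n}u\in\Upp$ for all large $n$, proved by conjugating with $\ell_t$ and openness of $\Upp$) together with $K_0$-invariance of $\Upp$ to replace $u$ by an element of $\Upp$, and then observes that an element of $\Upp$ has nontrivial projection to every positive root space that a nontrivial $x\in W(\Theta)$ sends to a negative one, so $x^{-1}\tilde u x\in\Pp$ forces $x$ to be trivial. Your proposal has no substitute for this step, which is where the actual content of the proposition lies; as written, the uniqueness claim is not established outside the split case $\Theta=\Delta$ (where $u_\Theta$ really is regular in $G$ and your classical reference applies).
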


\begin{proof}
Let $ux\Pp$ be an invariant flag of $ku^s_\Theta$ where $x$ is a lift of a element of the group $W(\Theta)$ and $u\in\Up$.  Let $n\in\N$ be such that $sn\geq N(u)$ where $N(u)$ is the number from the previous proposition. Then 
\begin{align*}
ux\Pp & =(k u^{s}_\Theta)^n ux\Pp=k^n u^{sn}_\Theta ux\Pp=k^n u^{sn}_\Theta u k^{-n}k^n x\Pp\\
&=k^n u^{sn}_\Theta u k^{-n}x x^{-1}k^n x\Pp=k^n u^{sn}_\Theta u k^{-n}x\Pp.
\end{align*}
By assumption on $n$, $u^{sn}_\Theta u\in \Upp$. Since $K_0$ stabilizes $\Upp$, $k^n u^{sn}_\Theta u k^{-n}\in\Upp$. So for the further consideration, we can assume $u\in\Upp$.

Now let $m\in\N$ be such that $sm\geq N(u^{-1})$. Then the equality $(ku^s)^m ux\Pp=ux\Pp$ is equivalent to $x^{-1}u^{-1}(ku^s)^m ux\in\Pp$. Further,
$$x^{-1}u^{-1}(ku^s)^m ux=x^{-1}u^{-1}u^{sm}k^muk^{-m}k^mx.$$
The elements $u^{-1}u^{sm}$ and $k^muk^{-m}$ are in $\Upp$, and $k^mx=x x^{-1}k^mx\in x\Pp$. We denote $\tilde u\coloneqq u^{-1}u^{sm}k^muk^{-m}\in\Upp$. Then $x^{-1}\tilde u x\in\Pp$. But if the element $x$ is not a lift of the trivial element of $W(\Theta)$, then it sends some positive root spaces to negative root spaces, and all projections of elements of $\Upp$ to positive root spaces are non-trivial. Therefore, $x^{-1}\tilde u x\in \Pp$ if~and~only~if $x$ is a lift of the trivial element of $W(\Theta)$, i.e.\ $x\Pp=\Pp$.
\end{proof}

\begin{prop}\label{retraction}
The subspace $\mathcal D\subset\Rep_+^{\fr}(\pi_1(S,b),G)$ is a strong deformation retract of $\Rep_+^{\fr}(\pi_1(S,b),G)$. In particular, the strong deformation retraction provides a bijection on the level of connected components of $\Rep_+^{\fr}(\pi_1(S,b),G)$ and $\mathcal D$, and $\Rep_+^{\fr}(\pi_1(S,b),G)$ is homotopy equivalent to $K_0^{1-\chi(S)}/K_0$.
\end{prop}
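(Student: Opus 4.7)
The plan is to use the first parametrization from Theorem~\ref{main_thm_positive},
$$\Rep_+^{\fr}(\pi_1(S,b),G)\cong\bigl((\Upp)^N\times\L_0^M\bigr)/\L_0\qquad(N=p_e-2\chi(S),\;M=1-\chi(S)),$$
and to build an $\L_0$-equivariant strong deformation retract upstairs, in two stages, that descends to the quotient. Under this identification $\mathcal D$ lifts to the $\L_0$-saturation of $\{(u_\Theta,\dots,u_\Theta)\}\times K_0^M$, whose image in the quotient is $K_0^M/K_0$ because $\Stab_{\L_0}(u_\Theta)=K_0$; so once the retraction is produced, the remaining two conclusions of the Proposition follow at once.

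Stage~1 would retract $(\Upp)^N$ onto the $\L_0$-orbit of $(u_\Theta,\dots,u_\Theta)$. Using the coordinates of $F$ from Section~\ref{positive_groups}, $\Upp\cong\prod_{\beta\in\Theta}\mathring c_\beta^{n_\beta}$ and $u_\Theta$ is the tuple constantly equal to $v_\beta$ in each $\beta$-block, so the $\L_0$-orbit in $(\Upp)^N\cong\prod_\beta\mathring c_\beta^{n_\beta N}$ lies inside the full diagonal $\prod_\beta\Delta_\beta(\mathring c_\beta)$; by Proposition~\ref{Levi-symmetric_space} $\L_0$ acts transitively on this diagonal, so the orbit equals the diagonal. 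Inside each $\beta$-block I would use the straight-line homotopy toward the centroid $\bar v=\tfrac1{n_\beta N}\sum_i v_i$, which remains in $\mathring c_\beta$ by convexity and is $\L_0$-equivariant because $\Ad|_{\L_0}$ is linear on $\up_\beta$; the diagonal is pointwise fixed. Extending by the identity on $\L_0^M$ produces an $\L_0$-equivariant strong deformation retract of $(\Upp)^N\times\L_0^M$ onto $\bigl(\L_0\cdot(u_\Theta,\dots,u_\Theta)\bigr)\times\L_0^M$.

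After Stage~1, the intermediate quotient by $\L_0$ becomes $\L_0^M/K_0$, by normalising the (transitively acted upon) first factor to $(u_\Theta,\dots,u_\Theta)$ and letting the stabilizer $K_0$ act on $\L_0^M$ by simultaneous conjugation. Stage~2 would then use the polar decomposition $\L_0=K_0\cdot\exp(\mathfrak p_0)$ (available since the Levi $\L$ is reductive and $\L_0$ contains its identity component) to retract each factor of $\L_0^M$ by $k\exp(p)\mapsto k\exp(tp)$. This retraction fixes $K_0^M$ pointwise and is $K_0$-equivariant, since conjugation by $k'\in K_0$ sends $(k,p)$ to $(k'kk'^{-1},\Ad(k')p)$; it therefore descends to a strong deformation retract of $\L_0^M/K_0$ onto $K_0^M/K_0=\mathcal D$.

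The composition of the two stages is the required strong deformation retract, from which the bijection on connected components and the homotopy equivalence $\Rep_+^{\fr}(\pi_1(S,b),G)\simeq K_0^{1-\chi(S)}/K_0$ are immediate. The main obstacle I anticipate is Stage~1: when some $n_\beta>1$, $\L_0$ does not act transitively on $\Upp$, so one cannot $\L_0$-equivariantly contract onto a single point. The resolution is to contract instead onto the $\L_0$-invariant diagonal submanifold, which by Proposition~\ref{Levi-symmetric_space} happens to coincide with the orbit of $u_\Theta^N$, and to exploit the linearity of $\Ad$ on each $\up_\beta$ so that centroid-based interpolations are automatically equivariant.
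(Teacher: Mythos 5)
Your proof is correct, and it reaches the paper's conclusion by a slightly different organization of the same two ingredients (Proposition~\ref{Levi-symmetric_space} and the polar decomposition of $\L_0$). The paper works in the second parametrization of Theorem~\ref{main_thm_positive}, $\bigl(\prod_\beta(\mathring c_\beta)^{n_\beta(p_e-2\chi(S))-\chi(S)}\times K_0^{1-\chi(S)}\bigr)/K_0$, where the reduction from $\L_0$ to the compact group $K_0$ has already been absorbed; there it suffices to contract each cone factor to the fixed point $v_\beta$ by the straight-line homotopy $(v,t)\mapsto tv+(1-t)v_\beta$, which is $K_0$-equivariant because $K_0$ acts linearly and fixes $v_\beta$. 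You instead stay in the first parametrization $\bigl((\Upp)^N\times\L_0^M\bigr)/\L_0$, where no point of the cones is fixed by all of $\L_0$, and you resolve this with the centroid trick: an $\L_0$-equivariant straight-line contraction of each $\beta$-block onto the diagonal, which by transitivity (Proposition~\ref{Levi-symmetric_space}) is exactly the $\L_0$-orbit of $(u_\Theta,\dots,u_\Theta)$; then you pass to the intermediate model $\L_0^M/K_0$ via $\Stab_{\L_0}(u_\Theta)=K_0$ and finish with the $K_0$-equivariant polar retraction $k\exp(p)\mapsto k\exp(tp)$. In effect your Stage~2 re-derives, inside the retraction argument, the step the paper delegated to the second homeomorphism of Theorem~\ref{main_thm_positive}. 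What each approach buys: the paper's proof is a one-line equivariant convexity argument once that second parametrization is granted, whereas yours only uses the first parametrization and makes explicit where the passage from $\L_0$ to $K_0$ happens; the price is the extra (but correctly handled) bookkeeping — checking that the equivariant homotopies descend to the quotients, that $(\mathcal O\times\L_0^M)/\L_0\cong\L_0^M/K_0$ as topological spaces, and that $\mathcal D$ sits inside the successive retracts and is fixed pointwise at both stages, all of which are standard and which you address. The identification of $\mathcal D$ with the image of $\{(u_\Theta,\dots,u_\Theta)\}\times K_0^M$ and the final homotopy equivalence with $K_0^{1-\chi(S)}/K_0$ then follow exactly as in the paper.
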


\begin{proof}
To prove the statement, we need to construct a strong deformation retraction $H\colon \Rep_+^{\fr}(\pi_1(S,b),G)\times [0,1]\to \Rep_+^{\fr}(\pi_1(S,b),G)$ such that $H(x,t)=x$ for all $x\in\mathcal D$ and all $t\in [0,1]$, $H(x,1)=x$ for all $x\in \Rep_+^{\fr}(\pi_1(S,b),G)$ and $H(x,0)\in \mathcal D$ for all $x\in \Rep_+^{\fr}(\pi_1(S,b),G)$.


Since the action of $K_0$ preserves the decomposition into factors of
$$\prod_{\beta\in\Theta}(\mathring c_\beta)^{(p_e-2\chi(S)) n_i-\chi(S)}\times K_0^{1-\chi(S)},$$
we define the $K_0$-equivariant retraction on every factor. Since every $\mathring c_\beta$ is a convex cone, $(v,t)\mapsto vt+v_\beta(1-t)\in \mathring c_\beta$ for every $v\in \mathring c_\beta$ and for all $t\in [0,1]$ is a well-defined retraction. Since conjugation by elements of $K_0$ is linear fixing $v_\beta$, this retraction is $K_0$-equivariant. 

Combining these retractions, we get a retraction of $\prod_{\beta\in\Theta}(\mathring c_\beta)^{(p_e-2\chi(S)) n_i-\chi(S)}\times K_0^{1-\chi(S)}/K_0$ onto $\mathcal D=\{u_\Theta\}^{p_e-2\chi(S)}\times K_0^{1-\chi(S)}/K_0=\left(\prod_{\beta\in\Theta}\{v_\beta\}^{n_i}\right)^{p_e-2\chi(S)}\times K_0^{1-\chi(S)}/K_0$ that is homeomorphic to $K_0^{1-\chi(S)}/K_0$.
\end{proof}

\section{Connected components of the space of positive representations}\label{sec:connected_components}

In this section, we study the space of positive non-framed representations. Using the results for framed representations, we show that for connected Lie group with positive structure the spaces of framed and non-framed positive representations has the same number of connected components.

Let $\Rep^{\fr}(\pi_1(S),G)\to \Rep(\pi_1(S),G)$ be the natural projection. In this section we call this projection the \defin{framing forgetting map}. We denote by $\Rep_+(\pi_1(S),G)$ the image of $\Rep_+^{\fr}(\pi_1(S),G)$ under the framing forgetting map.

First, we recall the notion of the path lifting property: A continuous map $f\colon X\to Y$ is said to have the \defin{path lifting property} if for every continuous $\sigma\colon [0, 1] \to Y$ and every $x\in f^{-1}(\sigma(0))$ there exists, up to
reparametrization, a lift $\tilde \sigma$ of $\sigma$ starting at $x$, namely $\tilde\sigma\colon [0, 1] \to X$ is continuous, $\tilde\sigma(0) = x$ and
there is a continuous, increasing, surjective function $\psi\colon [0, 1] \to [0, 1]$ such that $f\circ\tilde\sigma= \sigma\circ\psi$.
This is the case, for example, when $f$ is a covering; this is also the case when, for any $\sigma$ as above, the space $\sigma^*(f) = \{(t, x) \in [0, 1] \times X \mid \sigma(t) = f(x)\}$ is path-connected. A piecewise linear map between simplicial spaces that is surjective and with connected ﬁbers has the path lifting property.

\begin{prop}
The framing forgetting map $\Rep_+^{\fr}(\pi_1(S),G)\to\Rep_+(\pi_1(S),G)$ has the path lifting property.
\end{prop}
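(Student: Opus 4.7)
The plan is to verify the sufficient condition stated just before the proposition: it is enough to show that for every continuous path $\sigma\colon [0,1]\to\Rep_+(\pi_1(S),G)$, the pullback $\sigma^*(f) = \{(t,x)\in [0,1]\times\Rep_+^{\fr}(\pi_1(S),G)\mid \sigma(t)=f(x)\}$ is path-connected. To establish this, I will argue that $f$ is a surjective piecewise-linear map between simplicial spaces with path-connected fibers, after which the cited criterion applies directly.

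The first step is to equip both sides with a PL structure compatible with $f$. Surjectivity of $f$ is built into the definition of $\Rep_+(\pi_1(S),G)$ as the image of the framing forgetting map. For the simplicial structure, invoke Theorem~\ref{main_thm_positive}: the domain is the semialgebraic quotient $((\Upp)^{p_e-2\chi(S)}\times\L_0^{1-\chi(S)})/\L_0$, and $\Rep_+(\pi_1(S),G)$ inherits a compatible semialgebraic structure by further quotienting by the equivalence that identifies two framed representations having the same underlying representation. A triangulation of these semialgebraic sets turns $f$ into a PL map.

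The core of the argument is the second step: showing that each fiber $f^{-1}([\rho])$ is path-connected. A point of this fiber is a positive framing $F$ of $\rho$, taken modulo the centralizer $Z_G(\rho)$; concretely, at each puncture $p_i$ one selects a flag $F_i$ fixed by $\rho(c_i)$, subject to the global positivity requirement on the resulting tuple. The plan here is to use the retraction of Proposition~\ref{retraction}: applying it to a chosen lift $(\rho,F)$ yields a path in $\Rep_+^{\fr}$ ending at a degenerate representation, which by Proposition~\ref{uom_k} and the subsequent uniqueness proposition admits a unique positive framing. By exploiting that $\Upp$ and $\L_0$ are connected and that positivity is an open convex-cone condition (via the cones $\mathring c_\beta$), one can interpolate between any two lifts of $[\rho]$ through a family of positive framings, exhibiting path-connectedness of the fiber.

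The main obstacle will be the second step, specifically the global control over positive framings of an arbitrary (not necessarily degenerate) positive representation: one must verify that the retraction of Proposition~\ref{retraction} can be carried out in a manner compatible with two distinct framings of the same $\rho$, or alternatively describe explicitly the space of positive fixed flags of the peripheral holonomy and show it is connected. Once this connectedness of fibers is in hand, surjectivity together with the PL structure ensures, by the criterion recalled before the proposition, that $\sigma^*(f)$ is path-connected for every $\sigma$, completing the verification of the path lifting property.
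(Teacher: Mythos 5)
There is a genuine gap here: the fibers of the framing forgetting map are in general \emph{not} path-connected, so the sufficient criterion you set out to verify (surjective PL map with connected fibers) is simply not available for this map. Already for $G=\PSL_2(\R)$ and a once-punctured torus, take a Fuchsian $\rho$ whose peripheral holonomy $\rho(c)$ is hyperbolic: both the attracting and the repelling fixed point of $\rho(c)$ determine, by equivariance, framings of $\rho$, and both are positive --- the lifts of the puncture correspond to the gaps of the limit set, and choosing either endpoint of each gap preserves the cyclic order of the gaps --- while $Z_G(\rho)$ is trivial, so the fiber over $[\rho]$ contains at least two points and is disconnected (this is the familiar $2^{\#\{\text{hyperbolic punctures}\}}$ phenomenon from Fock--Goncharov theory). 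The path lifting property holds nevertheless, exactly as it does for coverings, whose fibers are discrete; but it cannot be obtained by proving fiber connectedness. Relatedly, the retraction of Proposition~\ref{retraction} does not preserve the fibers of the forgetting map: it deforms the cone parameters and hence the underlying representation, so applying it to two lifts of the same $[\rho]$ produces paths that leave the fiber, and the uniqueness of the framing of a degenerate representation says nothing about the framings of the given $\rho$ --- this is precisely the ``obstacle'' you flag, and it is fatal rather than technical. The auxiliary step (a semialgebraic or simplicial structure on $\Rep_+(\pi_1(S),G)$ making the forgetting map PL) is also asserted rather than proved; images and quotients by the noncompact group $\L_0$ require an argument.

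The paper avoids both problems by never analyzing the fibers of the map at the level of $\Rep$: it factors the problem through $\Hom^{\fr}_+(\pi_1(S),G)\to\Hom_+(\pi_1(S),G)\to\Rep_+(\pi_1(S),G)$, gets the path lifting property of the second map from connectedness of $G$ (the fibers of $\sigma^*(p)$ are $G$-orbits, Lemma~\ref{lem:path-lift-prop-hom-to-max}), and reduces the first map, puncture by puncture, to the incidence variety $\mathfrak{Q}=\{(g,L)\in\mathfrak{P}\times\F\mid g\cdot L=L\}\to\mathfrak{P}$, which is treated via the surjection $G\times\Pp\to\mathfrak{Q}$, the covering $G\to\Ad(G)$, and the algebraic (hence triangulable, PL) map $\Ad(G)\times\Pp\to\mathfrak{P}$ (Lemma~\ref{lem:path-lift-prop-dec-to-max-hom}). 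If you want to salvage your approach, you must either work with the weaker condition that $\sigma^*(f)$ be path-connected for each individual path $\sigma$ (which does not require connected fibers), or lift paths at the level of homomorphisms as the paper does.
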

\begin{proof}
Let $\Hom_+(\pi_1(S), G))$ and $\Hom^{\fr}_+(\pi_1(S), G))$ the spaces of positive homomorphisms and of framed positive homomorphisms. To prove the proposition we essentially follow the strategy from~\cite[Section~7]{AGRW}. The
proposition is a direct consequence of the following lemmas.

\begin{lem}\label{lem:path-lift-prop-hom-to-max}
The map $p\colon \Hom_+(\pi_1(S), G)\to \Rep_+(\pi_1(S), G)$ has the path lifting property.
\end{lem}

\begin{proof} The fibers of $\sigma^*(p)$ are path connected, since they are orbits for the action of the (path) connected group $G$, and thus the total space $\sigma^*(p)$ is indeed connected.
\end{proof}

\begin{lem}  \label{lem:path-lift-prop-dec-to-max-hom}
The map $\Hom^{\fr}_+(\pi_1(S), G)) \to  \Hom_+(\pi_1(S), G))$ has the path lifting property.
\end{lem}

\begin{proof}
Since we can choose independently the framing at the punctures, it is enough to answer positively the following problem:
  \begin{quote}
    Let $\mathfrak{P}\subset G$ be the set of elements $g$ having at
    least one invariant flag and $\mathfrak{Q}=\{(g,L) \in
      \mathfrak{P}\times \F \mid g\cdot L=L\}$. Then the natural map
      $\pi\colon \mathfrak{Q} \to \mathfrak{P}$ has the path lifting property.
  \end{quote}
Let $\Pp$ be the parabolic subgroup of $G$ as before considered also as an element of $\F$. Its stabilizer in $G$ agrees with $\Pp$ itself. We then have a surjective map  $G \times \Pp \to \mathfrak{Q}$ defined as $(h,p) \mapsto (hph^{-1}, h\Pp)$ and it is enough to prove that its composition with~$\pi$ has the path  lifting property, i.e.\ that $G \times \Pp \to \mathfrak{P}$ with $(h,p) \mapsto hph^{-1}$ has the path lifting property.

Since the center of $G$ acts trivially on $G$ by conjugation, this map factors through the map $G \times \Pp \to \Ad(G)\times \Pp\to\mathfrak{P}$, where the group $\Ad(G)\subseteq GL(\g)$ is the adjoint form of $G$. Since $G$ is connected, the natural map $G\to\Ad(G)$ is a connected covering and, in particular, has the path lifting property.

The map $\Ad(G)\times \Pp\to\mathfrak{P}$ is algebraic so that there exists simplicial structures on $\Ad(G)\times \Pp$ and on~$\mathfrak{P}$ such that the map is piecewise linear; since it is as well surjective, it has the path lifting property.
\end{proof}
This finishes the proof of the Proposition.
\end{proof}

\begin{teo}
The framing forgetting map $\Rep_+^{\fr}(\pi_1(S),G)\to\Rep_+(\pi_1(S),G)$ induces a bijection between $\pi_0(\Rep_+^{\fr}(\pi_1(S),G))$ and $\pi_0(\Rep_+(\pi_1(S),G))$.
\end{teo}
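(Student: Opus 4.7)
The plan is to combine the path lifting property just established with the deformation retraction of Proposition~\ref{retraction} and the uniqueness of framings for degenerate representations. Surjectivity at the level of $\pi_0$ is immediate, since the framing forgetting map $\pi$ is surjective by the very definition of $\Rep_+(\pi_1(S),G)$ as the image of $\Rep_+^{\fr}(\pi_1(S),G)$.

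For injectivity, suppose two connected components $C_1, C_2$ of $\Rep_+^{\fr}(\pi_1(S),G)$ map into the same connected component of $\Rep_+(\pi_1(S),G)$. By Proposition~\ref{retraction}, the subspace $\mathcal D$ of degenerate representations meets every component of $\Rep_+^{\fr}(\pi_1(S),G)$, so I can pick $z_i \in C_i \cap \mathcal D$ for $i=1,2$. Their images $\pi(z_1)$ and $\pi(z_2)$ then lie in the same component of $\Rep_+(\pi_1(S),G)$, so I may choose a path $\sigma\colon [0,1]\to \Rep_+(\pi_1(S),G)$ from $\pi(z_1)$ to $\pi(z_2)$. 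By the path lifting property, $\sigma$ lifts to a path $\tilde\sigma$ in $\Rep_+^{\fr}(\pi_1(S),G)$ starting at $z_1$ and ending at some $z_2' \in \pi^{-1}(\pi(z_2))$.

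The crux of the argument, and what I expect to be the main obstacle, is showing that the fiber $\pi^{-1}(\pi(z_2))$ consists of a single point, so that $z_2' = z_2$ and hence $C_1 = C_2$. This is provided by the uniqueness of framings for degenerate representations established earlier in Section~\ref{sec:homotopy_type}: every peripheral holonomy of a representative of $z_2$ is conjugate to some $k u_\Theta^s$ with $k \in \Norm_G(u_\Theta)$ and $s \in \Z \setminus \{0\}$, and each such element admits $\Pp$ as its unique invariant flag in $\F$. Consequently the flag assigned at every puncture is forced, the positive framing is unique, and the fiber of $\pi$ over a degenerate class is a singleton. Thus $z_2' = z_2$, the path $\tilde\sigma$ joins $z_1$ to $z_2$ inside $\Rep_+^{\fr}(\pi_1(S),G)$, and $C_1 = C_2$, as required.
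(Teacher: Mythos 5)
Your proposal is correct and follows essentially the same route as the paper: reduce to degenerate representations (which meet every component by the retraction of Proposition~\ref{retraction}), lift a path downstairs via the path lifting property, and use the uniqueness of the framing of a degenerate representation to force the lifted path's endpoint. The only cosmetic difference is that the paper additionally retracts the lifted path back into $\mathcal D$, whereas you conclude $C_1=C_2$ directly from the lifted path, which is fine.
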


\begin{proof}
The map between $\pi_0(\Rep_+^{\fr}(\pi_1(S),G))$ and $\pi_0(\Rep_+(\pi_1(S),G))$ is clearly surjective. We need to show that it is injective.

Let $\mathcal D_0$ be the image of the space of degenerate representations $\mathcal D$ under the framing forgetting map. Restricted to $\mathcal D$, the ``framing forgetting'' map is a bijection, because all the representations of $\mathcal D$ have only one framing. In this way $\mathcal D$ embeds naturally into $\Rep_+(\pi_1(S),G)$. We need to show that this embedding is an injection on the level of connected components. Let $x,y\in\mathcal D_0$ be two elements in one path connected component of $\Rep_+(\pi_1(S),G)$. We take a path $\gamma\colon [0,1]\to \Rep_+(\pi_1(S),G)$ such that $\gamma(0)=x$, $\gamma(1)=y$. Because of the path lifting property of the framing forgetting map, we can lift if to a path $\gamma'\colon [0,1]\to \Rep^{\fr}_+(\pi_1(S),G)$ such that $\gamma'(0)=x$ and $\gamma(1)=y$ (here we identify $\mathcal D$ with its image $\mathcal D_0$ under the ``framing forgetting'' map). Using the strong deformation retraction constructed in Proposition~\ref{retraction}, we retract the path $\gamma'$ to a path $\gamma''$ that lies entirely in $\mathcal D$ and connects $x$ and $y$, i.e.\ $x$ and $y$ lie in the same connected component of $\mathcal D$.
\end{proof}

\section{Examples}\label{sec:examples}

\subsection{Trivial example} Let $G$ be any Lie group. We take as $\Pp=\L=G$  and $\Up=\{\Id\}$. The flag variety $\F$ is just one point $G/G$. In this case any representation is transverse to any ideal triangulation $\T$. The main theorem gives us then the following classical result:
$$\Rep^{\fr}_\T(\pi_1(S),G)=\Rep(\pi_1(S),G)=G^{1-\chi(S)}/G.$$

\subsection{\texorpdfstring{$G$}{G} is isogenic to \texorpdfstring{$\SL_2(\R)$}{SL(2,R)}} If $G=\SL_2(\R)$, $\Upp$ is homeomorphic to $\R_+$ and
$$\L_0=\L=\{\diag(a,a^{-1})\mid a\in\R^\times\}$$
is an abelian group isomorphic to $\R^\times$. Then $\Rep_+^{\fr}(\pi_1(S,b),G)$ is homeomorphic to $\R_+^{p_e-2\chi(S)}\times (\R^\times)^{1-\chi(S)}/\R^\times$ where $a\in\R^\times$ acts trivially on $\R^\times$-factors and acts by multiplication by $a^2\in\R_+$ on $\R_+$-factors. We can simplify this quotient space identifying the group $\R^\times$ with the multiplicative group $\R_+\times\{1,-1\}$. We obtain in this way:
$$\bigl(\R_+^{p_e-2\chi(S)}\times (\R^\times)^{1-\chi(S)}\bigr)/\R^\times=\R_+^{p_e-3\chi(S)}\times\{1,-1\}^{1-\chi(S)}.$$

If $G=\PSL(2,\R)$, then $\L_0=\L=\{\diag(a,a^{-1})\mid a>0\}$. Then in the same way we obtain that $\Rep_+^{\fr}(\pi_1(S,b),G)$ is homeomorphic
$\R_+^{p_e-3\chi(S)}$. So we obtain a parametrization of the classical (framed) Teichm\"uller space for $S$.

\subsection{\texorpdfstring{$G=\PSL_n(\R)$}{G=PSL(n,R)}} In this case $\Upp$ is homeomorphic to $\R_+^{\frac{n(n-1)}{2}}$ and
$$\L_0=\{\diag(a_1,\dots,a_n)\mid a_i\in\R_+,\;a_1\dots a_n=1\}$$ is an abelian group isomorphic to $\R_+^{n-1}$. Its maximal compact subgroup is trivial. 
So we obtain that the space of positive framed representations is homeomorphic to
$$\R_+^{(p_e-2\chi(S)){\frac{n(n-1)}{2}}-\chi(S)(n-1)}$$
If $S$ does not have boundary, and the number of punctures is equal to $p$, we obtain $(p_e-2\chi(S)){\frac{n(n-1)}{2}}-\chi(S)(n-1)=-\chi(S)(n^2-1)$. So we obtain a parametrization of the space of positive framed representations.

More generally, if $G$ is a split real adjoint Lie group, then the Levi factor $\L_0$ is always the connected component of the identity of the Cartan subgroup, i.e.\ $\L_0\cong\R_+^N$ for some appropriate $N>0$. Its maximal compact subgroup is trivial, so the spaces of positive representations for adjoint spit real groups are always homeomorphic to $\R_+^M$ for $M=-\chi(S)\dim(G)+p_e\dim(\Up)$ and, in particular, they are connected. So in the case $p_e=0$, we obtain the result of~\cite[Theorem~1.13(ii)]{FG}.

\subsection{\texorpdfstring{$G=\Sp_{2n}(\R)$}{G=Sp(2n,R)} with Hermitian positive structure}\label{ex:sp_2n} In this case $\Upp$ is homeomorphic to $\Sym^+_n(\R)$ and
$\L_0=\L$ is isomorphic to $\GL_n(\R)$, its maximal compact subgroup is $\OO(n)$. It acts on itself by conjugation and on $\Sym^+_n(\R)$ by similarities, i.e.\ for $g\in\GL_n(\R)$ and $B\in \Sym^+_n(\R)$, $g(B)=gBg^t$. 
Using this, we obtain that the space of maximal framed representations is homeomorphic to:
$$\bigl(\Sym^+_n(\R)^{p_e-2\chi(S)}\times \GL_n(\R)^{1-\chi(S)}\bigr)/\GL_n(\R)=\bigl(\Sym^+_n(\R)^{p_e-3\chi(S)}\times \OO_n(\R)^{1-\chi(S)}\bigr)/\OO_n(\R)$$
So we obtain the result of~\cite{AGRW}.

\subsection{\texorpdfstring{$G$}{G} is \texorpdfstring{$\GL_2(A)$}{GL(2,A)} and some of its subgroups} Let $A$ be a unital $\R$-algebra. We denote by $A^\times$ the group of all invertible elements of $A$. Let $G=\GL_2(A)$ and let $\Pp$ be the subgroup of $G$ of all upper-triangular matrices. Then $\L=\{\diag(a,a')\mid a,a'\in A^\times\}\cong (A^\times)^2$, $\Up=\left\{\begin{pmatrix}1 & b \\ 0 & 1 \end{pmatrix}\mid v\in A\right\}\cong A$ and $\Up_*=\left\{\begin{pmatrix}1 & b \\ 0 & 1 \end{pmatrix}\mid v\in A^\times\right\}\cong A^\times$. So we obtain that $\Rep^{\fr}(\pi_1(S),G)$ is homeomorphic to
$$\bigl((A^\times)^{p_e-2\chi(S)}\times (A^\times\times A^\times)^{1-\chi(S)}\bigr)/(A^\times\times A^\times)$$
where $A^\times\times A^\times$ acts on itself by conjugation and $A^\times\times A^\times$ acts on $A^\times$ as follows: for $(c,c')\in A^\times\times A^\times$, $b\in A^\times$, $(c,c').b = cb(c')^{-1}$.

This quotient space can be simplified as follows: in the quotient space two tuples $t\coloneqq ((b_i)_{i=1}^{p_e-2\chi(S)},(a_j,a_j')_{j=1}^{1-\chi(S)})$ and $t'\coloneqq ((cb_i(c')^{-1})_{i=1}^{p_e-2\chi(S)},(ca_jc^{-1},c'a_j'(c')^{-1})_{j=1}^{1-\chi(S)})$ where $a_j,a_j',b_i,c,c'\in A^\times$ for all $i,j$ represent the same element. We can write $c'=cd$ for $d\in A^\times$. Then there exist the unique $d$ (namely $d=b_1$) such that the first component of the tuple $t'$ is equal to $1$. Then $t'\coloneqq (1, (cb_ib_1c^{-1})_{i=2}^{p_e-2\chi(S)},(ca_jc^{-1},cb_1a_j'b_1^{-1}c^{-1})_{j=1}^{1-\chi(S)})$. The elements $p_i\coloneqq b_ib^{-1}\in A^\times$ for all $i\in\{2,\dots,p_e-2\chi(S)\}$, $q_j\coloneqq a_j\in A^\times$ and $q_j'\coloneqq b_1a'_jb_1^{-1}\in A^\times$ for all  $j\in\{1,\dots,1-\chi(S)\}$ determine the element in $(A^\times)^{p_e-2\chi(S)}\times (A^\times\times A^\times)^{1-\chi(S)}/(A^\times\times A^\times)$ uniquely up to common conjugation by $A^\times$. Therefore,
$$\bigl((A^\times)^{p_e-2\chi(S)}\times (A^\times\times A^\times)^{1-\chi(S)}\bigr)/(A^\times\times A^\times)\cong (A^\times)^{1+p_e-4\chi(S)}/A^\times.$$
This result agrees with the result from~\cite{KR} that was obtained using spectral networks and partial abelianization procedure.

\subsubsection{\texorpdfstring{$G$}{G} is \texorpdfstring{$\Sp_2(A,\sigma)$}{Sp_2(A,sigma)} or \texorpdfstring{$\OO_{1,1}(A,\sigma)$}{O_{1,1}(A,sigma)}} Let now $A$  be a unital $\R$-algebra endowed with an anti-involution, i.e.\ with an $\R$-linear map $\sigma\colon A\to A$ such that $\sigma(ab)=\sigma(b)\sigma(a)$ for all $a,b\in A$ and $\sigma^2=\Id$. We denote by $A^\sigma\coloneqq \{a\in A\mid \sigma(a)=a\}$ and
$A^{-\sigma}\coloneqq\{a\in A\mid \sigma(a)=-a\}$ (for more details about groups over noncommutative involutive algebras see~\cite{ABRRW,R-Thesis}).

Let $\Omega_s\coloneqq \begin{pmatrix} 0 & 1 \\ 1 & 0\end{pmatrix}$ and $\Omega_s\coloneqq \begin{pmatrix} 0 & 1 \\ -1 & 0\end{pmatrix}$. Then we define
$$\OO_{1,1}(A,\sigma)\coloneqq \left\{g\in\Mat_2(A)\mid \sigma(g)^t\Omega_s g=\Omega_s\right\},$$
$$\Sp_2(A,\sigma)\coloneqq \left\{g\in\Mat_2(A)\mid \sigma(g)^t\Omega_a g=\Omega_a\right\}.$$
For these two groups, we again take as $\Pp$ the upper triangular matrices. Then $\L=\{\diag(a,\sigma(a)^{-1})\mid a\in A^\times\}$ and $\Up_*=\left\{\begin{pmatrix}1 & b \\ 0 & 1 \end{pmatrix}\mid v\in (A^{-\sigma})^\times\right\}\cong (A^{-\sigma})^\times$ if $G=\OO_{1,1}(A,\sigma)$ and $\Up_*=\left\{\begin{pmatrix}1 & b \\ 0 & 1 \end{pmatrix}\mid v\in (A^{\sigma})^\times\right\}\cong (A^{\sigma})^\times$ if $G=\Sp_2(A,\sigma)$.

So we obtain that $\Rep^{\fr}(\pi_1(S),G)$ is homeomorphic to
$$((A^{-\sigma})^\times)^{p_e-2\chi(S)}\times (A^\times)^{1-\chi(S)}/A^\times\text{, if $G=\OO_{1,1}(A,\sigma)$},$$
$$((A^{\sigma})^\times)^{p_e-2\chi(S)}\times (A^\times)^{1-\chi(S)}/A^\times\text{, if $G=\Sp_2(A,\sigma)$}.$$

\subsubsection{\texorpdfstring{$G$}{G} is Hermitian of tube type that can be seen as \texorpdfstring{$\Sp_2(A,\sigma)$}{Sp_2(A,sigma)}} This example generalizes Example~\ref{ex:sp_2n}.

An involutive finite-dimensional $\R$-algebra is called \defin{Hermitian} if the space $A^\sigma_+=\{a^2\mid a\in (A^\sigma)^\times\}$ is an open proper convex cone in $A^\sigma$. For more details about Hermitian algebras and symplectic groups over them we refer the reader to~\cite{ABRRW}. If $(A,\sigma)$ is Hermitian, then the group $\Sp_2(A,\sigma)$ is a Hermitian Lie group of tube type.

In this case $\Upp$ is homeomorphic to the cone of squares $A^\sigma_+=\{a^2\mid a\in (A^\sigma)^\times\}$ in $A^\sigma$, and $\L_0=\L$ is isomorphic to $A^\times$. The group $U_{(A,\sigma)}=\{a\in A^\times\mid \sigma(a)a=1\}$ is a maximal compact subgroup of $A^\times$. It acts on itself by conjugation and on $A^\sigma_+$ by similarities, i.e.\ for $g\in A^\times$ and $B\in A^\sigma_+$, $g(B)=gB\sigma(g)$. 
Using this, we obtain that the space of maximal framed representations is homeomorphic to:
$$\bigl((A^\sigma_+)^{p_e-2\chi(S)}\times (A^\times)^{1-\chi(S)}\bigr)/A^\times=\bigl((A^\sigma_+)^{p_e-3\chi(S)}\times (U_{(A,\sigma)})^{1-\chi(S)}\bigr)/U_{(A,\sigma)}.$$
So we obtain the result of~\cite{R-Thesis}.

In the case $A=\Mat(n,\R)$ and $\sigma$ is the transposition, $U_{(A,\sigma)}=\OO(n)$ that has two connected component, so the space of maximal framed representations has $2^{1-\chi(S)}$ connected components. If $A=\Mat(n,\CC)$ or $A=\Mat(n,\HH)$ and $\sigma$ is the transposition composed with the complex resp. quaternionic conjugation, then $U_{(A,\sigma)}=\UU(n)$, resp. $U_{(A,\sigma)}=\Sp(n)$ that are connected, so the space of maximal framed representations is connected as well.

\subsection{\texorpdfstring{$G$}{G} is the connected adjoint form of a Hermitian Lie group of tube type} Not all Hermitian Lie groups of tube type can be seen as $\Sp_2(A,\sigma)$ for some involutive algebra $(A,\sigma)$. However, our approach can be applied also in general case for any adjoint form of Hermitian Lie groups of tube type.  In this case, $G$ is the isometry group of the tube domain $J+iJ_+$ where $J$ is a formally real Jordan algebra and $J_+$ is the cone of squares in $J$. In this case, $\Upp=J_+$ and $\L_0=\L$ is the subgroup of $\GL(J)$ preserving $J_+$ and $K$ is a maximal compact of $\L$ which is the subgroup of $\L$ that preserves $1$ (for more details see~\cite[Chapter~10]{Faraut}). Further, the polar decomposition for $\L$ gives us the identification $\L=J_+\times K$. Using the~classification of formally real Jordan algebras, we obtain that $\Rep_+^{\fr}(\pi_1(S),G)$ is homeomorphic to:
\begin{enumerate}
\item $(\Sym^+(n,\R)^{p_e-3\chi(S)}\times \PO(n)^{1-\chi(S)})/\PO(n)$ if $G=\PSp(2n,\R)$ for $n\geq 1$;
\item $(\Herm^+(n,\CC)^{p_e-3\chi(S)}\times \SU(n)^{1-\chi(S)})/\SU(n)$ if $G=\SU(n,n)$ for $n\geq 1$;
\item $(\Herm^+(n,\HH)^{p_e-2-3\chi(S)}\times \PSp(n)^{1-\chi(S)})/\PSp(n)$ if $G$ is the adjoint form of $\SO^*(4n)$  for $n\geq 1$;
\item $(\Herm^+(3,\Oc)^{p_e-3\chi(S)}\times F_4^{1-\chi(S)})/F_4$, where $F_4$ is the compact from, if $G=E_{7(-25)}$.
\item If $G=\SO_0(2,n+1)$ for $n\geq 2$, then $J$ is the formally real Jordan algebra of Clifford type of signature $(1,n)$. This case will be considered as a part of the next example.
\end{enumerate}
In cases (1)--(4), unless $G=\PSp(2n,\R)$ for $n$ even, then the maximal compact subgroup $K$ of the Levi factor is always connected, so the space $\Rep_+^{\fr}(\pi_1(S),G)$ is connected as well. If $G=\PSp(2n,\R)$ and $n$ is even, then $K$ has two connected components, and so $\Rep_+^{\fr}(\pi_1(S),G)$ has $2^{1-\chi(S)}$ connected components.

\subsection{\texorpdfstring{$G$}{G} is an indefinite orthogonal group \texorpdfstring{$\SO_0(1+p,n+p)$}{SO(1+p,n+p)} where \texorpdfstring{$p\geq 1$}{p>0}, \texorpdfstring{$n\geq 2$}{n>1}}
Let $G$ be the group $\SO_0(1+p,n+p)$ where $p,n\in\N$ and $n\geq 2$ considered as the connected component of the identity of the isometry group of the symmetric bilinear form $j_p(v_1,v_2)=\frac{1}{2}v_1^tJ_pv_2$ on $V\coloneqq \R^{n+2p+1}$ where:
$$J_p=\begin{pmatrix}
0 & 0 & Q_{p+1}^t \\
0 & -\Id_{n-1} & 0 \\
Q_{p+1} & 0 & 0
\end{pmatrix}$$
where $\Id_{n-1}$ is the identity $(n-1)\times(n-1)$-matrix and
$$Q_{p+1}=\begin{pmatrix}
0 & 0& \dots & 0 & 1 \\
0 & 0& \dots & -1 & 0 \\
0 & (-1)^{p}& \dots & 0 & 0 \\
(-1)^{p+1} & 0& \dots & 0 & 0
\end{pmatrix}.$$
We consider the flag variety:
$$\F=\{F^1\subset F^2\subset\dots\subset F^p\mid \dim(F^i)=i,\; J|_{l_i}=0\}.$$
\begin{df}\begin{itemize}
\item The basis $\mathbf{e}=(e_1,\dots,e_{n+2p+1})$  of $V$ is called \defin{normal} if the form $j_p$ in the basis $\mathbf{e}$ is given by the matrix $J_p$.
\item We say that a flag $F\in\F$ is \defin{generated} by a normal basis $\mathbf e$ if $F=(F^1\subset\dots\subset F^p)$ and $F^i=\Span(e_1,\dots,e_i)$. We write $F=\Flag(\mathbf{e})$. We denote $F_{st}=\Flag(\mathbf{e}_{st})$.
\end{itemize}
\end{df}
The group $G$ acts on $\F$ transitively. We denote by $\Pp$ the parabolic subgroup of $G$ stabilizing $\Flag(\mathbf e_{st})$. Therefore $\F$ can be seen as the following homogeneous space $G/\Pp$. By the Levi decomposition, $\Pp$ is a semidirect product of its unipotent subgroup $\Up$ and its Levi subgroup $\L$.

Let $\mathbf e_{st}=(e_1,\dots, e_{n+2p+1})$ be the standard basis of $V$. We denote by $W=\Span(e_{p+1},\dots,e_{p+n+1})$. This is a vector subspace of $V$ and the restriction of $j_p$ to $W$ agrees with $j_0$ and has signature $(1,n)$. Therefore, the set of positive vectors $\{w\in W\mid\ j_0(w,w)>0\}$ is the union of two open proper convex cones. We chose one of them and call it $\Omega\subset W$. We also choose a vector $w_0\in \Omega$.

We now describe the Levi subgroup $\L$ corresponding to $\Pp$. We consider the group $(\R^\times)^p\times\SO(1,n)$. Let $x=(x_1,\dots,x_p,y)\in (\R^\times)^p\times\SO(1,n)$. We we call the signature of $x$ the value $\sgn(x)\coloneqq \prod_{i=1}^p\sgn(x_i)\sgn(y)\in\{1,-1\}$, where $\sgn(y)=1$ if $y\in\SO_0(1,p)$ and $\sgn(y)=-1$ otherwise.
We identify the Levi subgroup with the subgroup of $(\R^\times)^p\times\SO(1,n)$ consisting of all elements of signature $1$ in the following way:
\begin{equation}\label{Levi-identification}
(x_1,\dots,x_p,y)\mapsto \diag(x_1,\dots,x_p,y,x_p^{-1},\dots,x_1^{-1}).
\end{equation}

The positive semigroup $\Upp$ of $\Up$ is generated by the elements $u_1(c_1),\dots u_{p-1}(c_{p-1})$ and $u_{p}(v)$ where $c_1,\dots,c_{p-1}>0$, $v\in\Omega$, $u_i(c_i)$ has $1$ on the diagonal, $c_i$ in the $(i,i+1)$- and $(n+2p+1-i,n+2p+2-i)$-positions and zeroes everywhere else and
$$u_{p}(v)=\begin{pmatrix}
\Id_{p-1} & 0 & 0 & 0\\
0 & 1 & v^t & j_0(v,v) & 0\\
0 & 0 & \Id_{n+1} & J_0v & 0\\
0 & 0 & 0 & 1 & 0\\
0 & 0 & 0 & 0 & \Id_{p-1}
\end{pmatrix}.$$
The stabilizer $\L_0$ of $\Upp$ in $\L$ under the identification~(\ref{Levi-identification}) is
$$\L_0=\{x=(x_1,\dots,x_p,y)\mid \sgn(x_1)=\dots=\sgn(x_p)=\sgn(y),\;\sgn(x)=1\}.$$
\begin{rem}
If $p$ is even, $\L_0$ is connected. Otherwise, it has two connected components.
\end{rem}

A maximal compact subgroup $K$ of $\L_0$ can be described as follows:
\begin{enumerate}
    \item if $p$ is odd, then
    $$K=\{(\varepsilon,\dots,\varepsilon,y)\mid y\in \OO(w_0^{\bot_{j_0}}),\;\varepsilon=\sgn(y)\}\cong\OO(w_0^\bot)\cong\OO(n)$$
    where $\OO(w_0^{\bot_{j_0}})$ acts on $W$ as follows: $y\in\OO(w_0^{\bot_{j_0}})$, $y(w_0+v)=\sgn(y) w_0+y(v)$ for $v\in w_0^{\bot_{j_0}}$;

    \item If $p$ is even, then
    $$K=\{(1,\dots,1,y)\mid y\in \SO(w_0^\bot)\}\cong\SO(w_0^\bot)\cong\SO(n).$$
\end{enumerate}

Let $C\in\Mat_{p\times(p-1)}(\R)$ and $\mathbf{v}=(v_1,\dots,v_p)$ for all $v_i\in W$. We denote
$$u(C,\mathbf{v})\coloneqq\prod_{i=1}^{p}\left(\prod_{j=1}^{p-1}u_j(c_{ij})\right)u_p(v_i)\in \Up.$$

\begin{fact}
\begin{itemize}
\item Every element $u\in \Up$ can be written in a unique way as $u=u(C,\mathbf{v})$ where $C\in\Mat_{p\times(p-1)}(\R)$ and $\mathbf{v}=(v_1,\dots,v_p)\in W^p$.

\item Every element $u\in \Upp$ can be written in a unique way as $u=u(C,\mathbf{v})$ where $C\in\Mat_{p\times(p-1)}(\R_+)$ and $\mathbf{v}\in\Omega^p$.
\end{itemize}
\end{fact}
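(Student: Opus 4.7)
The plan is to interpret this Fact as a direct application of the parametrization theorem from~\cite{GW2} (quoted as the map $F$ in Section~\ref{positive_groups}), specialized to $G=\SO_0(1+p,n+p)$ and a carefully chosen reduced expression for the longest element $w_\Theta^0$ of $W(\Theta)$. First I would identify $W(\Theta)$ and its simple generators $\sigma_1,\dots,\sigma_p$: here $W(\Theta)$ is a Weyl group of type $B_p$, whose longest element has length $p^2$, and a convenient reduced expression that matches the shape of $u(C,\mathbf{v})$ is
$$w_\Theta^0=(\sigma_1\sigma_2\cdots\sigma_p)^p.$$
This choice reads off the ordered sequence of simple root indices $(i_1,\dots,i_{p^2})=(1,2,\dots,p,\,1,2,\dots,p,\,\dots,\,1,2,\dots,p)$.

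Next, for each simple root $\beta_j$ I would identify the root space $\up_{\beta_j}$ and its exponential. For $j<p$ the root $\beta_j$ is non-special and $\up_{\beta_j}$ is one-dimensional, so $u_j(c)=\exp(c\cdot e_{\beta_j})$ for a Chevalley generator $e_{\beta_j}$; for $j=p$ the root $\beta_p=\beta_\Theta$ is the special root and $\up_{\beta_p}$ is $(n+1)$-dimensional, naturally identified with $W$ via the explicit matrix formula for $u_p(v)$. A direct matrix computation verifies that $v\mapsto u_p(v)$ is precisely the exponential of a linear isomorphism $W\to\up_{\beta_p}$. Granting these identifications, the first item of the Fact is an immediate translation of the bijection
$$\up_{\beta_{i_1}}\times\dots\times\up_{\beta_{i_{p^2}}}\longrightarrow \Up$$
of Section~\ref{positive_groups}. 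The second item follows from the restriction of this bijection to a diffeomorphism from $\prod_j \mathring c_{\beta_{i_j}}$ onto $\Upp$, together with the identifications $\mathring c_{\beta_j}=\R_+$ for $j<p$ (a general feature of non-special simple roots) and $\mathring c_{\beta_p}=\Omega$ (which is the definition of $\Omega$ adopted earlier in this subsection).

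The main obstacle will be the bookkeeping needed to justify the choice of reduced expression: one must verify that $(\sigma_1\cdots\sigma_p)^p$ is indeed a reduced expression in the $B_p$ Weyl group, equivalently that its length equals $p^2$, which coincides with $\#(\Sigma^+\setminus\Span(\Delta\setminus\Theta))$ for this choice of $\Theta$. One should also check that the induced ordering of the positive roots of $\Sigma^+_\Theta$ matches, factor by factor, the arrangement of $u_j(c_{ij})$ and $u_p(v_i)$ in $u(C,\mathbf{v})$, so that the abstract bijection from~\cite{GW2} and the concrete matrix product coincide on the nose rather than merely up to a reparametrization.
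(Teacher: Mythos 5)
Your handling of the second item is sound and is surely the intended route: item two is exactly the specialization of the quoted diffeomorphism of \cite[Theorem~1.3]{GW2} (Section~\ref{positive_groups}) to $G=\SO_0(1+p,n+p)$, with $W(\Theta)$ of type $B_p$, $\mathring c_{\beta_j}\cong\R_+$ for $j<p$, $\mathring c_{\beta_p}\cong\Omega\subset W$, and the reduced word $(\sigma_1\cdots\sigma_p)^p$; your reducedness check is even easier than you suggest, since $c=\sigma_1\cdots\sigma_p$ satisfies $c^p=w_\Theta^0$ (its eigenvalues are $e^{\pi i m_j}=-1$ because all exponents of $B_p$ are odd), so an expression for $w_\Theta^0$ with $p^2=\ell(w_\Theta^0)$ letters is automatically reduced. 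The paper itself states the Fact without proof, so there is nothing to compare against beyond this citation.

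The genuine gap is in your first item. Section~\ref{positive_groups} (and the theorem of \cite{GW2} it quotes) only asserts that the restriction of $F$ to $\mathring c_{\beta_{i_1}}\times\dots\times\mathring c_{\beta_{i_l}}$ is a diffeomorphism onto $\Upp$; it does \emph{not} assert that $F$ is a bijection of the full product $\up_{\beta_{i_1}}\times\dots\times\up_{\beta_{i_l}}$ onto $\Up$, and no such bijection exists, because the word repeats each simple factor $p$ times. Concretely, for $p=2$ the map is $(c_{11},v_1,c_{21},v_2)\mapsto u_1(c_{11})u_2(v_1)u_1(c_{21})u_2(v_2)$, and setting $v_1=0$ gives $u_1(c_{11})u_1(c_{21})u_2(v_2)=u_1(c_{11}+c_{21})u_2(v_2)$, which depends only on $c_{11}+c_{21}$; so injectivity fails as soon as one leaves the positive parameters, and the claimed ``immediate translation'' of the first bullet collapses. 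A proof of a statement about all of $\Up$ must use a genuinely different decomposition --- for instance one factor $\exp(\up_{[\alpha]})$ for each of the $p^2$ classes $[\alpha]\in\Sigma^+_\Theta/\Span(\Delta\bs\Theta)$, each occurring exactly once, which does give global coordinates on the unipotent group --- or must restrict the parameters to a suitable open dense set; the positivity machinery you invoke cannot yield the uniqueness asserted in the first bullet (and, as the computation above shows, that uniqueness fails on the locus where some $v_i=0$ for the specific product $u(C,\mathbf v)$, so the first item needs reformulation, not just a different proof).
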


The space $\Rep_+^{\fr}(\pi_1(S),G)$ is homoeomorphic to:
\begin{align*}
& \left(\R_+^{(p_e-2\chi(S))\cdot p(p-1)}\times\Omega^{(p_e-2\chi(S))\cdot p}\times\L^{1-\chi(S)}\right)/\L\\
\cong\; & \R_+^{((p_e-2\chi(S))\cdot p-\chi(S))(p-1)}\times \left(\Omega^{(p_e-2\chi(S))\cdot p-\chi(S)}\times K^{1-\chi(S)}\right)/K.
\end{align*}
If $p$ is even this space is connected because in this case $K\cong\SO(n)$. Otherwise, $K\cong\OO(n)$ and this space has $2^{1-\chi(S)}$ connected components.

\subsection{\texorpdfstring{$G$}{G} is an exceptional from the family \texorpdfstring{$F_{4(4)}$}{F4}, \texorpdfstring{$E_{6(2)}$}{E6}, \texorpdfstring{$E_{7(-5)}$}{E7}, \texorpdfstring{$E_{8(-24)}$}{E8}}
In this example we use results on exceptional Lie groups from~\cite{Yokota}. In this case $\Theta = \{\alpha_1, \alpha_2\}$, $W(\Theta)=W_{G_2}=\langle s_1,s_2 \rangle$ with the longest element $(s_1s_2)^3$ and the relation $(s_1s_2)^3=(s_2s_1)^3$. The Levi subgroup $\L_0$, its maximal compact subgroup $K$ and the positive semigroup $\Upp$ can be explicitly determined: $\L_0\cong\R_+\times\R_+\times H$ and $\Upp\cong \R_+^3\times C^3$ where
\begin{enumerate}
    \item for $G=F_{4(4)}$ (split form), $H=\PGL_3(\R)$, $K=\SO(3)$, $C=\Sym^+_3(\R)$;
    \item for $G=E_{6(2)}$ (quasi-split form), $H=\PGL_3(\CC)$, $K=\PU(3)$, $C=\Herm^+_3(\CC)$;
    \item for $G=E_{7(-5)}$, $H=\PGL_3(\HH)$, $K=\PSp(3)$, $C=\Herm^+_3(\HH)$;
    \item for $G=E_{8(-24)}$, $H=E_{6(-26)}$ the connected adjoint form, $K=F_4$ the connected compact adjoint form, $C=\Herm^+_3(\Oc)$.
\end{enumerate}
The Levi factor $\L_0$ acts on $\Upp$ as follows: $\L_0\times\Upp\to \Upp$
$$((a_1,a_2,b),(x_1,x_2,x_3,y_1,y_2,y_3))\mapsto (a_1a_2x_1,a_1a_2x_2,a_1a_2x_3,a_2(b.y_1),a_2(b.y_2),a_2(b.y_3)),$$
where all $a_i,x_i\in \R_+$, all $y_i\in C$, $b\in H$ and $b.y_i$ denotes the action of $b$ on $y_i\in C$.

The representation space $\Rep_+^{\fr}(\pi_1(S),G)$ is homeomorphic to

\begin{enumerate}
    \item $\R_+^{3 p_e-8\chi(S)}\times (\Sym^+_3(\R)^{3 p_e-7\chi(S)}\times \SO(3)^{1-\chi(S)})/\SO(3)$ for $G=F_{4(4)}$;
    \item $\R_+^{3 p_e-8\chi(S)}\times (\Herm^+_3(\CC)^{3 p_e-7\chi(S)}\times \PU(3)^{1-\chi(S)})/\PU(3)$ for $G=E_{6(2)}$;
    \item $\R_+^{3 p_e-8\chi(S)}\times (\Herm^+_3(\HH)^{3 p_e-7\chi(S)}\times \PSp(3)^{1-\chi(S)})/\PSp(3)$ for $G=E_{7(-5)}$;
    \item $\R_+^{3 p_e-8\chi(S)}\times (\Herm^+_3(\Oc)^{3 p_e-7\chi(S)}\times F_4^{1-\chi(S)})/F_4$ for $G=E_{8(-24)}$.
\end{enumerate}

In all these cases $K$ is connected, so $\Rep_+^{\fr}(\pi_1(S),G)$ is connected as well.

\bibliographystyle{alpha} \bibliography{bibl}
\end{document}